\numberwithin{equation}{section} 
\definecolor{ddorange}{rgb}{1,0.5,0}
\definecolor{ddcyan}{rgb}{0,0.2,1.0}
\def\Xint#1{\mathchoice
{\XXint\displaystyle\textstyle{#1}}%
{\XXint\textstyle\scriptstyle{#1}}%
{\XXint\scriptstyle\scriptscriptstyle{#1}}%
{\XXint\scriptscriptstyle\scriptscriptstyle{#1}}%
\!\int}
\def\XXint#1#2#3{{\setbox0=\hbox{$#1{#2#3}{\int}$}
\vcenter{\hbox{$#2#3$}}\kern-.5\wd0}}
\def\dashint{\Xint-}
\def\th@plain{%
  \thm@notefont{}
  \itshape 
}
\def\th@definition{%
  \thm@notefont{}
  \normalfont 
}
\newtheorem{thm}{Theorem}[section]
\newtheorem{prop}[thm]{Proposition}
\newtheorem{lem}[thm]{Lemma}
\theoremstyle{definition}
\newtheorem{defn}[thm]{Definition}
\newtheorem{oss}[thm]{Remark}
\newcommand{\N}{\mathbb{N}}
\newcommand{\R}{\mathbb{R}}
\renewcommand{\epsilon}{\varepsilon}
\newcommand{\sm}{\setminus}
\newcommand{\Rd}{\mathbb{R}^n}
\newcommand{\Mdd}{{\R^{n\times n}_{sym}}}
\newcommand{\e}{\varepsilon}
\title[On some non-local approximation of nonisotropic Griffith-type functionals]{On some non-local approximation of nonisotropic Griffith-type functionals}
\author[Fernando Farroni]{Fernando Farroni}
\address[Fernando Farroni]{Dipartimento di Matematica ed Applicazioni ``R. Caccioppoli'', Universit\`{a} di Napoli Federico~II, Via Cintia Monte Sant'Angelo, 80126 Napoli, Italy}
\email[Fernando Farroni]{fernando.farroni@unina.it}
\author[Giovanni Scilla]{Giovanni Scilla}
\address[Giovanni Scilla]{Dipartimento di Scienze di Base e Applicate per l'Ingegneria (SBAI), Sapienza Universit\`{a} di Roma, Via A. Scarpa 16, 00161 Roma, Italy}
\email[Giovanni Scilla]{giovanni.scilla@uniroma1.it}
\author[Francesco Solombrino]{Francesco Solombrino}
\address[Francesco Solombrino]{Dipartimento di Matematica ed Applicazioni ``R. Caccioppoli'', Universit\`{a} di Napoli Federico~II, Via Cintia Monte Sant'Angelo, 80126 Napoli, Italy}
\email[Francesco Solombrino]{francesco.solombrino@unina.it}
\begin{document}
\begin{abstract}
The approximation in the sense of $\Gamma$\hbox{-}convergence of nonisotropic Griffith-type functionals, with $p-$growth ($p>1$) in the symmetrized gradient, by means of a suitable sequence of non-local convolution type functionals defined on Sobolev spaces, is analysed.
\end{abstract}
\keywords{non-local approximations, $\Gamma$\hbox{-}convergence, Griffith functional, brittle fracture}
\subjclass{49Q20; 49J45; 74R10}

\maketitle

\setcounter{tocdepth}{1}  
\tableofcontents


\bigskip
\bigskip

\section{Introduction}
The scope of this paper is to provide a generalization of recent results, obtained in \cite{SS21}, concerning the approximation of brittle fracture energies for linearly elastic materials, by means of {\it nonlocal functionals} defined on Sobolev spaces, which are easier to handle also from a computational point of view.

In \cite{SS21} an approach originally devised by Braides and Dal Maso~\cite{BraDal97} 
for the approximation of the Mumford-Shah functional has been generalized to the linearly elastic context. Namely, it was shown that, for a given bounded increasing function $f\colon \R^+\to \R^+$  the energies
\begin{equation*}
F_\epsilon(u):=\frac1\varepsilon\int_\Omega f\left(\epsilon\dashint_{B_\epsilon(x)\cap\Omega} W\left(\mathcal Eu(y)\right)\,\mathrm{d}y\right)\,\mathrm{d}x
\end{equation*}
$\Gamma$-converge to the functional 
\[
\alpha\int_{\Omega} W(\mathcal{E}u(x))\,\mathrm{d}x  + 2\beta \mathcal{H}^{d-1}(J_u)\,,
\]
with $\alpha=f^\prime(0)$ and $\beta=\lim_{t\to +\infty}f(t)$, in the $L^1(\Omega)$-topology. Above, $W(\mathcal Eu(y))$ is a convex elastic energy depending on the linearized strain $\mathcal Eu$, given by the symmetrized gradient of a vector-valued displacement $u$, whose jump set $J_u$ represents the cracked part of a material. The energy space of the limit functional is the one of generalized functions with bounded deformation, introduced in \cite{DM2013}. 

It is noteworthy that the above result allowed one for a general (convex) bulk energy $W$ having $p$-growth for $p>1$. The proof strategy must then avoid, at least when estimating the bulk part, any slicing procedure. This latter is instead successful in the special case \footnote{we  remark that this particular case is however not the most relevant one from a mechanical point of view, as even for an isotropic material additional terms in the bulk energy are expected to appear.}  $W(\xi)=|\xi|^p$ , considered for instance in \cite{Negri2006}. There, non-local convolution-type energies of the form
\begin{equation}
\frac1\varepsilon\int_\Omega f\left(\varepsilon\int_{\R^n} \left|\mathcal Eu(y)\right|^p\rho_\epsilon(x-y)\,\mathrm{d}y\right)\,\mathrm{d}x
\label{eq:Negri}
\end{equation}
are considered, where  $\rho$ is a convolution kernel whose support is a convex bounded domain and $\rho_\epsilon(z)$ is the usual sequence
of convolution kernels $\rho(z/\epsilon)/\epsilon^d$. The $\Gamma$\hbox{-}limit of \eqref{eq:Negri} with respect to the $L^1$ convergence is given by
\begin{equation*}
\int_{\Omega}|\mathcal{E}u(x)|^p\,\mathrm{d}x  + \int_{J_u} \phi_\rho(\nu)\,\mathrm{d}\mathcal{H}^{d-1}\,, 
\end{equation*}
where the anisotropic surface density $\phi_\rho$ depends on the geometry and on the size of ${\rm supp }\rho$.  A similar effort of generalizing the results of \cite{BraDal97} to Mumford-Shah type energies with non-isotropic surface part has been previously performed in \cite{CT}.

In this paper, we extend the focus of \cite{Negri2006, SS21} by showing that general Griffith-type functionals of the form
\begin{equation}\label{eq:Gr}
\alpha\int_{\Omega} W(\mathcal{E}u(x))\,\mathrm{d}x  + 2\beta \int_{J_u} \phi(\nu)\,\mathrm{d}\mathcal{H}^{d-1}\,,
\end{equation}
where $\phi$ is any norm on $\R^n$, can be obtained as variational limit of non-local convolution-type functionals 
\[
\frac1\varepsilon\int_\Omega f\left(\varepsilon\int_{\R^n}W(\mathcal Eu(y))\rho_\epsilon(x-y)\,\mathrm{d}y\right)\,\mathrm{d}x\,.
\]
Above,  $f$ is again a bounded nondecreasing function with $\alpha=f^\prime(0)$ and $\beta=\lim_{t\to +\infty}f(t)$, and the unscaled kernel $\rho$ has the bounded convex symmetric domain $\overline{S}:=~\{\xi\in \R^n \colon \phi(\xi)\le 1\}$ as its support. This is the analogue, in the linear elastic setting, of the results in \cite{CT}.

The proof strategy we devise is based on a localization method and involves nontrivial adaptions to the method used in \cite{SS21}, in particular when estimating the bulk term in the $\Gamma$-liminf inequality (Proposition \ref{prop:estimate}). There, we have to impose (and this is the only point in the paper) an additional restriction on the convolution kernel $\rho$, namely of being nonincreasing with respect to the given norm $\phi$ (see Assumption \ref{ass-N2} below). This is namely needed in order to be able to estimate from below the size of the nonlocal approximations of the bulk term in an anisotropic tubular neighborhood of the set where they exceed the threshold $\frac{\beta}\alpha$, which heuristically corresponds to the breaking of the elastic bonds. With this, a set $K^\prime_\varepsilon$ with small area and bounded perimeter, where the fracture energy concentrates can be explicitly constructed. This yields an estimate of the $\Gamma$-liminf which has an optimal constant in front of the bulk term, although being non-optimal for the surface energy.

Another non-optimal estimate for the $\Gamma$-liminf, but with an optimal constant for the surface energy can be instead obtained by a slicing procedure, involving a comparison argument and the convexity of the open set $S$ (Proposition \ref{prop:lowboundjump}). As bulk and surface energy in \eqref{eq:Gr} are mutually singular as measures, a localization procedure entail then  the $\Gamma$-liminf inequality (Proposition \ref{prop:lowerbound}).  Finally, the $\Gamma$-limsup inequality (Proposition \ref{prop:upperbound}) can be obtained by a direct construction for a regular class of competitors having a ``nice'' jump set, and which are dense in energy. Notice indeed that such an approximation (see Theorem \ref{thm: approx} for a precise statement) is possibile also with respect to an anisotropic norm $\phi$, combining  the recent results in \cite{CC} with the ones in \cite{CorToa99}. 

As a final remark, it would be desirable to get rid on the structural assumption \ref{ass-N2} on the convolution kernels, which is used only in Proposition \ref{prop:estimate}.  It is our opinion that this is going to require quite a delicate abstract analysis of the $\Gamma$-limit of nonlocal functionals which approximate free-discontinuity problems in $GSBD$, possibly including also finite-difference models which are well suited to numerical approximations (see \cite{CSS} for a recent discrete finite-difference approximation of some Griffith-type functionals in $GSBD$). A similar analysis for the $SBV$ setting has been performed in \cite{Cortesani1998357}, where integral representation formulas for the limit energy have been provided. Furthermore, nontrivial sufficient conditions have been given under which the bulk part of the energy can be recovered by only considering weakly compact sequences in Sobolev spaces.  We plan to defer this abstract analysis to a forthcoming contribution. For the asymptotic analysis via $\Gamma$-convergence of local free-discontinuity functionals in linear elasticity and the related issues, we refer the reader to the very recent papers \cite{CCS, CriFriSol, homogen}.

\emph{Outline of the paper:}  The paper is structured as follows. In Section~\ref{sec:notation} we fix the basic notation and results on the function spaces we will deal with (Section~\ref{sec: gsbd}), together with some technical lemmas (Section~\ref{sec:lemmas}) which will be useful throughout the paper. In Section~\ref{sec:model} we list the main assumptions, introduce our model (eq. \eqref{energies0}), and state the main results of the paper, provided {in} Theorem~\ref{thm:mainresult} and Theorem~\ref{thm:mainresult2}. Section~\ref{sec:compactnessestimbelow} is devoted to the proof of the compactness statements in the main Theorems (Proposition \ref{prop:compactness}), and to the $\Gamma$\hbox{-}liminf inequality, which is proved in Section~\ref{sec:gammaliminf} combining the estimates in Sections~\ref{sec:estimbelowbulk} and \ref{sec:estimbelowsurf}. The proof of the  upper bound is given in Section~\ref{sec:upperbound}.

\section{Notation and preliminary results} \label{sec:notation}

\subsection{Notation}\label{sec:notation}

The symbol $|\cdot|$ denotes the Euclidean norm in any dimension, while $\langle\cdot,\cdot\rangle$ stands for the scalar product in $\mathbb{R}^n$. We will always denote by $\Omega$ an open, bounded subset of $\mathbb{R}^n$ {with Lipschitz boundary}, and by $\mathbb{S}^{n-1}$ the $(n-1)$-dimensional unit sphere. The Lebesgue measure in $\mathbb{R}^n$ and the $s$-dimensional Hausdorff measure are written as $\mathcal{L}^n$ and $\mathcal{H}^s$, respectively. $\mathcal{A}(\Omega)$ stands for the family of the open subsets of $\Omega$. 

Let $S$ be a bounded, open, convex and \emph{symmetrical} set, i.e. $S=-S$. For $\eta>0$, we denote by $\eta S$ the $\eta$-dilation of $S$ and we will often use the shorthand {$S(x,\eta)$} in place of $x+\eta S$. We consider $|\cdot|_S$ the norm induced by $S$, defined as
\begin{equation}
|x|_S:=\inf\{\eta>0:\,\, x\in\eta S\}\,,
\end{equation}
whose unit ball $\{|x|_S<1\}$ coincides with $S$, and, correspondingly, we introduce the distance to a closed bounded set $K\subset\R^n$; namely, 
\begin{equation}
{\rm dist}_S(x,K):=\min_{y\in K}|x-y|_S\,,\quad x\in\R^n\,.
\end{equation}

\subsection{$GBD$ and $GSBD$ functions}\label{sec: gsbd}

In this section we recall some basic definitions and results on generalized functions with bounded deformation, as introduced in \cite{DM2013}. Throughout the paper we will use standard notations for the spaces $(G)SBV$ and $(G)SBD$, referring the reader to \cite{AFP} and \cite{ACDM, BCDM, Temam}, respectively, for a detailed treatment on the topics.\\

Let $\xi\in\R^n\backslash\{0\}$ and $\Pi^\xi=\{y\in\R^n:\, \langle\xi,y\rangle=0\}$. If $\Omega\subset\R^n$ and $y\in\Pi^\xi$ we set $\Omega_{\xi,y}:=\{t\in\R:\, y+t\xi\in \Omega\}$ and $\Omega_\xi:=\{y\in \Pi^\xi:\, \Omega_{\xi,y}\neq\emptyset\}$. Given $u:\Omega\to\R^n$, $n\geq2$, we define $u^{\xi,y}: \Omega_{\xi,y}\to\R$ by 
\begin{equation}
u^{\xi,y}(t):=\langle u(y+t\xi),\xi\rangle\,, 
\label{section1}
\end{equation}
while if $v: \Omega\to\R$, the symbol $v^{\xi,y}$ will denote the restriction of $v$ to the set $\Omega_{\xi,y}$; namely,
\begin{equation}
v^{\xi,y}(t):= v(y+t\xi)\,.
\label{section2}
\end{equation}

Let $\xi\in \mathbb{S}^{n-1}$. For any $x\in\R^n$ we denote by $x_\xi$ and $y_\xi$ the projections onto the subspaces $\Xi:=\{t\xi:\,\,t\in\R\}$ and $\Pi^\xi$, respectively. For $\sigma, r >0$ and $x\in\R^n$ we define the cylinders
\begin{equation*}
C_{\sigma,r}^\xi(0):=\{x\in\R^n:\,\, |x_\xi|<\sigma\,,\,\, |y_\xi|<r\}\,,\quad C_{\sigma,r}^\xi(x):=x+C_{\sigma,r}^\xi(0)\,.
\end{equation*}
Note that $C_{\sigma,r}^\xi(x)=(x_\xi-\sigma,x_\xi+\sigma)\times B^{n-1}_{r}(y_\xi)$, where $B^{n-1}$ denotes a ball in the $(n-1)$-dimensional space $\Pi^\xi$. 

\begin{defn}
An $\mathcal L^{n}$-measurable function $u:\Omega\to \R^{n}$ belongs to $GBD(\Omega)$ if there exists a positive bounded Radon measure $\lambda_u$ such that, for all $\tau \in C^{1}(\R^{n})$ with $-\frac12 \le \tau \le \frac12$ and $0\le \tau'\le 1$, and all $\xi \in \mathbb{S}^{n-1}$, the distributional derivative $D_\xi (\tau(\langle u,\xi\rangle))$ is a bounded Radon measure on $\Omega$ whose total variation satisfies
$$
\left|D_\xi (\tau(\langle u,\xi\rangle))\right|(B)\le \lambda_u(B)
$$
for every Borel subset $B$ of $\Omega$. 
\end{defn}

If $u\in GBD(\Omega)$ and $\xi\in\R^n\backslash\{0\}$ then, in view of \cite[Theorem~9.1, Theorem~8.1]{DM2013}, the following properties hold:
\begin{enumerate}
\item[{\rm(a)}] $\dot{u}^{\xi,y}(t)=\langle\mathcal{E}u(y+t\xi)\xi,\xi\rangle$ for a.e. $t\in \Omega_{\xi,y}$;\\
\item[{\rm(b)}] {$J_{u^{\xi,y}}=(J_u^\xi)_{\xi,y}$} for $\mathcal{H}^{n-1}$-a.e. $y\in\Pi^\xi$, where
\begin{equation*}
J_u^\xi:=\{x\in J_u:\, \langle u^+(x)-u^-(x),\xi\rangle\neq0\}\,.
\label{jumpset}
\end{equation*}
\end{enumerate}

\begin{defn}
A function $u \in GBD(\Omega)$ belongs to the subset $GSBD(\Omega)$ of special functions of bounded deformation if, in addition, for every $\xi \in \mathbb{S}^{n-1}$ and $\mathcal H^{n-1}$-a.e.\ $y \in \Pi^\xi$, it holds that $u^{\xi,y}\in SBV_{\mathrm{loc}}(\Omega_{\xi,y})$.
\end{defn}

The inclusions $BD(\Omega)\subset GBD(\Omega)$ and $SBD(\Omega)\subset GSBD(\Omega)$ hold (see \cite[Remark 4.5]{DM2013}).  {Although} they are, in general, strict,  relevant properties of $BD$ functions are retained also in this weak setting. In particular, $GBD$-functions have an approximate symmetric differential $\mathcal{E}u(x)$ at $\mathcal L^{n}$-a.e.\ $x\in \Omega$. Furthermore the jump set $J_u$ of a $GBD$-function is $\mathcal H^{n-1}$-rectifiable (this is proven in \cite[Theorem 6.2 and Theorem 9.1]{DM2013}, but it has been recently shown that this property is actually a general one for measurable functions \cite{DN}).

{Let $p>1$.} The space $GSBD^p(\Omega)$ is defined as
$$
GSBD^p (\Omega):= \{u \in GSBD(\Omega): \mathcal{E}u \in L^p (\Omega; \mathbb R_{\mathrm{sym}}^{n\times n})\,,\,\mathcal H^{n-1}(J_u) < +\infty\}\,.
$$

Every function in $GSBD^p(\Omega)$ can be approximated with the so-called ``piecewise smooth''  $SBV$-functions, denoted $\mathcal{W}(\Omega;\R^n)$, characterized by the three properties 
\begin{equation}\label{1412191008}
\begin{cases}
u\in SBV(\Omega;\R^n)\cap W^{m,\infty}(\Omega\sm J_u;\R^n) \,\text{for every }m\in \N\,,\\
\mathcal{H}^{n-1}(\overline{J}_u \sm J_u ) = 0\,,\\
\overline{J}_u \text{ is the intersection of $\Omega$ with a finite union of ${(n{-}1)}$-dimensional simplexes}\,. 
\end{cases}
\end{equation}
This is stated by the following result, which combines \cite[Theorem~1.1]{CC} with  \cite[Theorem~3.9]{CorToa99}. 

\begin{thm}\label{thm: approx}
Let $\phi$ be a norm on $\R^n$. Let $\Omega\subset\mathbb{R}^n$ be a bounded open Lipschitz set, and let $u\in GSBD^p(\Omega;\R^n)$. Then there exists a sequence {$(u_j)$} such that $u_j\in \mathcal{W}(\Omega;\Rd)$ and
\begin{align}
u_j\to u \mbox{ in measure on $\Omega$},\label{convme}\\
\mathcal{E}u_j \to \mathcal{E}u \mbox{ in $L^p(\Omega;\R^{n\times n}_{sym})$,}\label{convgrad}\\
\int_{J_{u_j}}\phi(\nu_{u_j})\mathcal{H}^{n-1}\to \int_{J_{u}}\phi(\nu_u)\mathcal{H}^{n-1}\,.\label{convjump}
\end{align}
Moreover, if $\int_\Omega \psi(|u|)\,\mathrm{d}x$ is finite for $\psi:[0,+\infty)\to[0,+\infty)$ continuous, {increasing}, with
\begin{equation*}
\psi(0)=0,\,\,\, \psi(s+t)\le C(\psi(s)+\psi(t)), \,\,\, \psi(s)\le C(1+s^p), \,\,\, {\lim_{s\to+\infty}{\psi(s)}=+\infty}
\end{equation*} 
then
\begin{equation}
\lim_{j\to+\infty}\int_\Omega \psi(|u_j-u|)\,\mathrm{d}x=0\,.
\label{eq:densityfidel}
\end{equation}
\label{thm:density}
\end{thm}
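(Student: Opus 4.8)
\emph{Proof plan.} I would deduce the statement by chaining two known density results through a diagonal argument: the $GSBD^p$ approximation theorem of Chambolle--Crismale \cite[Theorem~1.1]{CC} and the non-isotropic $SBV$ approximation theorem of Cortesani--Toader \cite[Theorem~3.9]{CorToa99}. \emph{Step 1 (from $GSBD^p$ to piecewise smooth $SBV$).} First I would apply \cite[Theorem~1.1]{CC} to $u\in GSBD^p(\Omega;\R^n)$, obtaining a sequence $(v_k)$ with $v_k\in SBV^p(\Omega;\R^n)\cap L^\infty(\Omega;\R^n)$, each $v_k$ of class $C^\infty$ outside the essentially closed jump set $\overline{J_{v_k}}$, the latter being contained in a finite union of $C^1$ hypersurfaces, and such that $v_k\to u$ in measure, $\mathcal{E}v_k\to\mathcal{E}u$ in $L^p(\Omega;\R^{n\times n}_{sym})$, and $\int_{J_{v_k}}\phi(\nu_{v_k})\,\mathrm{d}\mathcal{H}^{n-1}\to\int_{J_u}\phi(\nu_u)\,\mathrm{d}\mathcal{H}^{n-1}$ for the given norm $\phi$; the convergence of this anisotropic surface term is available at this stage because the approximating cracks are finite unions of $C^1$ hypersurfaces converging to $J_u$, so that the convergence of the surface measures is accompanied by $\mathcal{H}^{n-1}$-a.e. convergence of the normal fields, hence (by boundedness of $\phi$) convergence of $\int\phi(\nu)$. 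When moreover $\int_\Omega\psi(|u|)\,\mathrm{d}x<\infty$, the same construction — which keeps the $L^\infty$ bounds under control and uses a preliminary truncation at levels tending to $+\infty$ — also yields $\int_\Omega\psi(|v_k-u|)\,\mathrm{d}x\to 0$, via the subadditivity $\psi(s+t)\le C(\psi(s)+\psi(t))$, the growth $\psi(s)\le C(1+s^p)$ and dominated convergence.

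\emph{Step 2 (polyhedralization).} Each $v_k$ is an $SBV^p(\Omega;\R^n)\cap L^\infty$ function with finite jump energy and $W^{1,p}$-regularity off its jump set, so I would apply \cite[Theorem~3.9]{CorToa99} to it: for every fixed $k$ this produces a sequence $(v_{k,l})_l\subset\mathcal{W}(\Omega;\R^n)$ — in particular with $\overline{J_{v_{k,l}}}$ equal to the intersection of $\Omega$ with a finite union of $(n-1)$-simplexes — such that, as $l\to\infty$, $v_{k,l}\to v_k$ in measure, $\mathcal{E}v_{k,l}\to\mathcal{E}v_k$ in $L^p$, and $\int_{J_{v_{k,l}}}\phi(\nu_{v_{k,l}})\,\mathrm{d}\mathcal{H}^{n-1}\to\int_{J_{v_k}}\phi(\nu_{v_k})\,\mathrm{d}\mathcal{H}^{n-1}$; this last convergence is exactly the content of \cite[Theorem~3.9]{CorToa99} and rests on the $BV$-ellipticity of the integrand $\nu\mapsto\phi(\nu)$, which holds because $\phi$ is a norm. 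Since $v_k\in L^\infty$, the approximants $v_{k,l}$ can be taken uniformly bounded, and then convergence in measure gives $\int_\Omega\psi(|v_{k,l}-v_k|)\,\mathrm{d}x\to 0$ as well.

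\emph{Step 3 (diagonalization) and the main difficulty.} Fixing a distance $d_m$ metrizing convergence in measure on $\Omega$, I would choose for each $k$ an index $l(k)$ with $d_m(v_{k,l(k)},v_k)+\|\mathcal{E}v_{k,l(k)}-\mathcal{E}v_k\|_{L^p}+\big|\int_{J_{v_{k,l(k)}}}\phi(\nu)\,\mathrm{d}\mathcal{H}^{n-1}-\int_{J_{v_k}}\phi(\nu)\,\mathrm{d}\mathcal{H}^{n-1}\big|+\int_\Omega\psi(|v_{k,l(k)}-v_k|)\,\mathrm{d}x\le 1/k$; then $u_j:=v_{j,l(j)}\in\mathcal{W}(\Omega;\R^n)$ would satisfy \eqref{convme}--\eqref{convjump}, and \eqref{eq:densityfidel} under the stated integrability assumption, by the triangle inequalities for $d_m$ and $\|\cdot\|_{L^p}$, the estimate $\psi(|u_j-u|)\le C(\psi(|u_j-v_j|)+\psi(|v_j-u|))$, and Step 1. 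The one genuinely delicate point is the anisotropic surface term: Step 1 needs the form of the $GSBD^p$ density theorem that delivers convergence of $\int_{J_{v_k}}\phi(\nu_{v_k})\,\mathrm{d}\mathcal{H}^{n-1}$, and not merely of $\mathcal{H}^{n-1}(J_{v_k})$, which one obtains by exploiting the $C^1$ structure of the approximating cracks; while Step 2 requires checking that the hypotheses of \cite[Theorem~3.9]{CorToa99} are met by the $v_k$ and that the $p$-growth bulk term, in place of the quadratic one, does not interfere with that construction. Everything else is the routine diagonal argument together with the interpolation needed for \eqref{eq:densityfidel}.
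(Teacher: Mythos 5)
Your proposal is correct and follows exactly the route the paper itself indicates: the theorem is stated there without proof precisely as the combination of \cite[Theorem~1.1]{CC} (density of bounded $SBV^p$ functions with nice jump sets in $GSBD^p$, including the $\psi$-fidelity clause) with \cite[Theorem~3.9]{CorToa99} (polyhedral approximation with convergence of anisotropic surface energies), glued by a diagonal argument. The only cosmetic remark is that in Step 1 the convergence of $\int_{J_{v_k}}\phi(\nu_{v_k})\,\mathrm{d}\mathcal{H}^{n-1}$ is most cleanly justified by the property $\mathcal{H}^{n-1}(J_{v_k}\triangle J_u)\to 0$ furnished by \cite[Theorem~1.1]{CC}, rather than by an a.e.\ convergence of normals.
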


As observed in  \cite[Remark~4.3]{ChaCri18b}, we may even approximate through functions $u$ such that,  besides \eqref{1412191008}, have a {\it closed} jump set strictly contained in $\Omega$ made of {\it pairwise disjoint} $(n{-}1)$-dimensional simplexes, with $J_u \cap \Pi_i \cap \Pi_l=\emptyset$ for any two different hyperplanes $\Pi_i$, $\Pi_l$.

We recall the following general $GSBD^p$ compactness result from \cite{CC18Comp}, which generalizes \cite[Theorem 11.3]{DM2013}. 
\begin{thm}[$GSBD^p$ compactness]\label{th: GSDBcompactness}
 Let $\Omega \subset \R^n$ be an open, bounded set,  and let $(u_j)_j \subset  GSBD^p(\Omega)$ be a sequence satisfying
$$ \sup\nolimits_{j\in \N} \big( \Vert \mathcal{E}u_j \Vert_{L^p(\Omega)} + \mathcal{H}^{n-1}(J_{u_j})\big) < + \infty.$$
Then there exists a subsequence, still denoted by {$(u_j)$}, such that the set  ${A^\infty} := \lbrace x\in \Omega: \, |u_j(x)| \to +\infty \rbrace$ has finite perimeter, and  there exists  $u \in GSBD^p(\Omega)$ such that 
\begin{align}\label{eq: GSBD comp}
{\rm (i)} & \ \ u_j \to u \  \ \ \  \mbox{ in measure on $\Omega\sm A^\infty$}, \notag \\ 
{\rm (ii)} & \ \ \mathcal{E}u_j \rightharpoonup\mathcal{E}u \ \ \ \text{ in } L^p(\Omega \setminus A^\infty; \Mdd),\notag \\
{\rm (iii)} & \ \ \liminf_{j \to \infty} \mathcal{H}^{n-1}(J_{u_j}) \ge \mathcal{H}^{n-1}(J_u \cup  (\partial^*A^\infty \cap\Omega)  )\,,
\end{align}
where $\partial^*$ denotes the essential boundary of a set with finite perimeter.
\end{thm}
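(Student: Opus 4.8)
The plan is to split the argument into two parts: first establish compactness in measure and identify the blow-up set $A^\infty$ together with its finite perimeter, then identify the limit as a $GSBD^p$ function, recover $\mathcal{E}u$ as the weak $L^p$-limit of $\mathcal{E}u_j$, and prove the jump lower bound (iii). The two main tools will be a Poincar\'e--Korn type inequality for $GSBD^p$ functions with a small jump set --- controlling, on a cube $Q$, the $L^1$-distance of the function from an infinitesimal rigid motion outside an exceptional set whose Lebesgue measure and perimeter are bounded, up to a dimensional constant, by $\mathcal{H}^{n-1}(J_u)$ --- used together with a localization over fine grids of cubes; and the one-dimensional slicing of $GBD$ functions, i.e.\ properties (a)--(b) above, combined with an elementary one-dimensional closure and lower semicontinuity lemma for $SBV_{\mathrm{loc}}$ functions whose values may escape to infinity.

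For the first part I would cover $\Omega$ by a fine grid of cubes and, for each $j$, apply the Poincar\'e--Korn inequality on every cube $Q$ for which $\mathcal{H}^{n-1}(J_{u_j}\cap Q)$ is below the required smallness threshold; since $\sum_Q\mathcal{H}^{n-1}(J_{u_j}\cap Q)\le\mathcal{H}^{n-1}(J_{u_j})$ is uniformly bounded, this leaves out only a controlled number of cubes. On each such $Q$ one obtains an infinitesimal rigid motion $a_j^Q$ and a set $\omega_j^Q$ with $\mathcal{L}^n(\omega_j^Q)+\mathcal{H}^{n-1}(\partial^*\omega_j^Q\cap Q)\le c\,\mathcal{H}^{n-1}(J_{u_j}\cap Q)$ such that $u_j-a_j^Q$ is bounded, uniformly in $j$, in $L^1(Q\sm\omega_j^Q)$. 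Passing to a subsequence and diagonalising over a countable family of nested grids and their (countably many) cubes, on each such $Q$ one has the dichotomy: either the rigid motions $(a_j^Q)_j$ are equibounded, so that $(u_j)$ turns out to be precompact in measure on $Q$; or $|a_j^Q|\to+\infty$, so that $Q\sm\omega_j^Q$ gets absorbed into $A^\infty$. Using that on two overlapping cubes the corresponding rigid motions differ by an infinitesimal rigid motion, the local limits patch into a measurable map $u\colon\Omega\sm A^\infty\to\R^n$ with $u_j\to u$ in measure on $\Omega\sm A^\infty$, which is (i); refining the grids identifies $A^\infty$ up to null sets, and the bound $\sum_Q\mathcal{H}^{n-1}(\partial^*\omega_j^Q\cap Q)\le c\,\mathcal{H}^{n-1}(J_{u_j})$ shows that it has finite perimeter. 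Since $(\mathcal{E}u_j)$ is bounded in $L^p(\Omega;\Mdd)$, a further subsequence yields $\mathcal{E}u_j\weak\sigma$ in $L^p(\Omega;\Mdd)$.

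For the second part I would slice. Fixing $\xi\in\mathbb{S}^{n-1}$, property (a) and Fubini give $\int_{\Pi^\xi}\|\dot u_j^{\xi,y}\|_{L^p(\Omega_{\xi,y})}^p\,\dd\mathcal{H}^{n-1}(y)\le\|\mathcal{E}u_j\|_{L^p(\Omega)}^p$, and (b) gives $\int_{\Pi^\xi}\#(J_{u_j^{\xi,y}})\,\dd\mathcal{H}^{n-1}(y)\le\mathcal{H}^{n-1}(J_{u_j})$; extracting (diagonally over a countable dense set of directions) a subsequence along which $|\langle\mathcal{E}u_j\,\xi,\xi\rangle|^p\,\mathcal{L}^n$ and $\mathcal{H}^{n-1}\res J_{u_j}$ converge weakly-$*$, Fatou's lemma gives for $\mathcal{H}^{n-1}$-a.e.\ $y$ that $\liminf_j\big(\|\dot u_j^{\xi,y}\|_{L^p}^p+\#(J_{u_j^{\xi,y}})\big)<+\infty$. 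By the first part and Fubini, $u_j^{\xi,y}\to u^{\xi,y}$ in measure on $\Omega_{\xi,y}\sm(A^\infty)_{\xi,y}$, and since the limit is fixed the one-dimensional closure and lower semicontinuity lemma yields that $u^{\xi,y}$ is $SBV_{\mathrm{loc}}$ there, that $\dot u_j^{\xi,y}\weak\dot u^{\xi,y}$ in $L^p$, that $(A^\infty)_{\xi,y}$ is a finite union of intervals, and that
\[
\#(J_{u^{\xi,y}})+\#\big(\partial((A^\infty)_{\xi,y})\cap\Omega_{\xi,y}\big)\le\liminf_{j}\#(J_{u_j^{\xi,y}})\,,
\]
the extra endpoints recording the jumps of $u_j^{\xi,y}$ that escape to $\pm\infty$. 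Integrating this information and invoking the slicing theory of \cite{DM2013}: the slice structure makes $u$, extended by $0$ on $A^\infty$, an element of $GSBD^p(\Omega)$; comparing $\dot u_j^{\xi,y}\weak\dot u^{\xi,y}=\langle\mathcal{E}u\,\xi,\xi\rangle$ on a.e.\ line with $\langle\mathcal{E}u_j\,\xi,\xi\rangle\weak\langle\sigma\,\xi,\xi\rangle$ forces $\sigma=\mathcal{E}u$ on $\Omega\sm A^\infty$, which is (ii); and integrating the displayed inequality in $y$ and over $\xi\in\mathbb{S}^{n-1}$, and applying the integral--geometric identity $\int_{\mathbb{S}^{n-1}}\!\int_{\Pi^\xi}\#(J_{w^{\xi,y}})\,\dd\mathcal{H}^{n-1}(y)\,\dd\mathcal{H}^{n-1}(\xi)=c_n\,\mathcal{H}^{n-1}(J_w)$ to $w=u$ (whose jump set contains $\partial^*A^\infty\cap\Omega$) and to $w=u_j$, together with Fatou in $(y,\xi)$, gives $\mathcal{H}^{n-1}\big(J_u\cup(\partial^*A^\infty\cap\Omega)\big)\le\liminf_j\mathcal{H}^{n-1}(J_{u_j})$, i.e.\ (iii), with the optimal constant.

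I expect the main obstacle to lie in the first part, and in particular in the Poincar\'e--Korn inequality with a small jump set and in making the localization produce a \emph{globally consistent} pair $(u,A^\infty)$: bounding the size and the perimeter of the set where the function departs from a single rigid motion purely in terms of $\mathcal{H}^{n-1}(J_u)$ is the deep analytic input, and the grid/patching argument requires a careful choice of the nested grids and of the subsequence so that the local rigid motions, exceptional sets and local limits match on overlaps while the perimeter of $A^\infty$ stays controlled. A second, more technical difficulty is the one-dimensional lemma with escaping values: one must show that along a generic line the jumps of $u_j^{\xi,y}$ split, in the limit, \emph{cleanly} into genuine jumps of the finite limit $u^{\xi,y}$ and endpoints of $(A^\infty)_{\xi,y}$, with no jump mass lost and with the one-dimensional count controlled uniformly in $j$, and all this jointly measurably in $y$ and $\xi$; the simultaneous use of the integral--geometric formula for $u$ and for the $u_j$ is what upgrades the resulting estimate to the sharp constant in (iii).
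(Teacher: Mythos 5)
This theorem is not proved in the paper: it is quoted verbatim as an external result from \cite{CC18Comp} (Chambolle--Crismale), generalizing \cite[Theorem~11.3]{DM2013}, so there is no in-paper proof to compare against. Your outline does, however, reproduce the actual strategy of that reference: a Poincar\'e--Korn inequality for functions with small jump set applied on fine grids of cubes, a dichotomy on the associated infinitesimal rigid motions to produce $A^\infty$ and the limit $u$, and a slicing argument with a one-dimensional closure/lower-semicontinuity lemma plus the integral-geometric formula to obtain (ii) and (iii). So the plan is the correct one.

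As a proof, though, it rests on two black boxes that carry essentially all the difficulty, and you should be aware that neither is routine. First, the Poincar\'e--Korn inequality with an exceptional set $\omega$ whose \emph{perimeter} (not just measure) is controlled linearly by $\mathcal{H}^{n-1}(J_u\cap Q)$ is itself a deep theorem (and the control on $\mathcal{L}^n(\omega)$ is actually of order $\mathcal{H}^{n-1}(J_u\cap Q)^{n/(n-1)}$, which is why a smallness threshold is needed); without it the finite perimeter of $A^\infty$ and the compactness in measure do not follow. Moreover, $L^1$-boundedness of $u_j-a_j^Q$ off $\omega_j^Q$ does not by itself give precompactness in measure: one needs the refinement over nested grids to approximate $u_j$ by piecewise rigid motions with bounded coefficients, and a genuine argument that the set where the coefficients blow up coincides, up to null sets, with $\{|u_j|\to\infty\}$. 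Second, your displayed one-dimensional inequality counting $\#(J_{u^{\xi,y}})$ \emph{plus} $\#(\partial(A^\infty)_{\xi,y})$ must be formulated so as not to double-count a point that is simultaneously an endpoint of $(A^\infty)_{\xi,y}$ and a jump of the extended limit; the clean statement is a lower bound for the count of the \emph{union} $J_{u^{\xi,y}}\cup\partial((A^\infty)_{\xi,y})$, and it is this union that the integral-geometric formula converts into $\mathcal{H}^{n-1}(J_u\cup(\partial^*A^\infty\cap\Omega))$ (using that $\mathcal{H}^{n-1}$-a.e.\ point of $J_u$ lies in $J_u^\xi$ for a.e.\ $\xi$, so no constant is lost). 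With those two ingredients supplied --- which is exactly what \cite{CC18Comp} does --- the rest of your argument (Fatou in $(y,\xi)$, identification of $\sigma=\mathcal{E}u$ from the slices over a dense set of directions) is sound.
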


\begin{oss}\label{rem: compactness}
If in the statement above one additionally assumes that
\[
 \displaystyle\sup_{j\in \N}\int_\Omega  \psi(|u_j|)\,\mathrm{d}x<+\infty
\]
for a positive, continuous and increasing function $\psi$ with $\lim_{s\to +\infty}\psi(s)=+\infty$, then {$A^\infty=\emptyset$}, so that $|u|$ is finite a.e., and (i) holds on $\Omega$. Moreover, if $\psi$ is superlinear at infinity, that is
\begin{equation*}
\lim_{s\to+\infty}\frac{\psi(s)}{s}=+\infty\,,
\end{equation*}
by the Vitali dominated convergence theorem one gets that  $u \in L^1(\Omega)$ and ${\rm (i)}$ holds with respect to the $L^1$-convergence in $\Omega$.
\end{oss}

\subsection{Some lemmas}\label{sec:lemmas}

We recall here (without adding the standard proofs) some properties of integral convolutions in the setting of Sobolev spaces. 

\begin{prop}\label{prop:convolutions}
Let $w\in W^{1,p}(\Omega;\R^n)$ and $\rho\in L^\infty(\R^n)$ be a convolution kernel, with ${\rm supp}\,\rho\subset\overline{S}$ for some bounded, open and convex set $S\subset\Omega$.  Set $\rho_\theta(x):=\frac1{\theta^d}\rho\left(\frac x\theta\right)$. Then the following holds:
\begin{itemize}
\item[(i)] let $\Omega' \subset\subset \Omega$ and $0\le \theta \le \mathrm{dist}_S(\Omega', \partial \Omega)$. The convolution
\begin{equation*}
\varphi^\theta(x):=\int_{\Omega} w(y)\rho_\theta(y-x)\,\mathrm{d}y
\end{equation*}
belongs to $W^{1,p}(\Omega';\R^n)$. Moreover, it holds that
\begin{equation}
\nabla \varphi^\theta (x) = \int_{\Omega} \nabla w(y)\rho_\theta(y-x)\,\mathrm{d}y \quad \mbox{ a.e. on $\Omega'$.}
\label{commut}
\end{equation}
\item[(ii)] assume that $w_{\e}\rightarrow w$ in $L^1(\Omega;\R^n)$ and let $\theta_\varepsilon$ be any sequence with $\theta_\varepsilon \to 0$ when $\varepsilon \to 0$. Then the sequence
	\[
	\hat{w}_\e(x):=\int_{\Omega} w_{\e}(y)\rho_{\theta_\varepsilon}(y-x)\,\mathrm{d} y 
	\]
satisfies $\hat{w}_\e \to c w$ in $L^1(\Omega;\R^n)$, where $c=\int_{\R^n} \rho(x)\,\mathrm{d}x$.
\end{itemize}
\end{prop}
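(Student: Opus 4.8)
The plan is to prove the two items of Proposition~\ref{prop:convolutions} separately; both are essentially classical facts about convolutions, the only structural input being the role of the hypothesis $\theta\le\mathrm{dist}_S(\Omega',\partial\Omega)$ in keeping the relevant convolution supports inside $\Omega$. Throughout one may argue componentwise and thus assume $w$ to be scalar.

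For item (i) I would first note that $\varphi^\theta$ is well defined on $\Omega'$: if $x\in\Omega'$ then $\rho_\theta(y-x)=0$ unless $|y-x|_S\le\theta$, and since the $|\cdot|_S$-balls are convex, the constraint $\theta\le\mathrm{dist}_S(\Omega',\partial\Omega)$ forces $x+\theta\overline S\subset\Omega$; hence the integral over $\Omega$ reduces to one over a compact subset of $\Omega$, it is finite because $w\in L^p(\Omega)$ and $\rho_\theta\in L^\infty$, and Young's inequality gives $\|\varphi^\theta\|_{L^p(\Omega')}\le\|\rho\|_{L^1}\|w\|_{L^p(\Omega)}$. The same remark shows that $g_i(x):=\int_\Omega\partial_iw(y)\,\rho_\theta(y-x)\,\mathrm{d}y$ defines an $L^p(\Omega')$ function. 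To identify $g_i$ with the weak derivative $\partial_i\varphi^\theta$, I would test against an arbitrary $\psi\in C_c^\infty(\Omega')$ and use Fubini (legitimate since on $\Omega\times\Omega'$ the integrand is supported on a compact set and is integrable there) to get
\[
\int_{\Omega'}\varphi^\theta\,\partial_i\psi\,\mathrm{d}x=\int_\Omega w(y)\,(\rho_\theta*\partial_i\psi)(y)\,\mathrm{d}y=\int_\Omega w(y)\,\partial_i h(y)\,\mathrm{d}y ,\qquad h:=\rho_\theta*\psi ,
\]
where $\psi$ and $\partial_i\psi$ are extended by zero outside $\Omega'$. Since $\psi$ is smooth with compact support, $h\in C^\infty$ with $\mathrm{supp}\,h\subset\theta\overline S+\Omega'\subset\subset\Omega$, hence $h\in C_c^\infty(\Omega)$, and the definition of the distributional derivative of $w$ together with one more application of Fubini yields $\int_\Omega w\,\partial_i h\,\mathrm{d}y=-\int_\Omega\partial_i w\,h\,\mathrm{d}y=-\int_{\Omega'}g_i\,\psi\,\mathrm{d}x$. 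Therefore $\partial_i\varphi^\theta=g_i$ in $\mathcal{D}'(\Omega')$, and combined with the $L^p$ bounds this shows $\varphi^\theta\in W^{1,p}(\Omega';\R^n)$ and \eqref{commut}.

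For item (ii) I would extend $w$ and every $w_\e$ by zero to functions of $L^1(\R^n)$ (possible since $\Omega$ is bounded), still denoted $w,w_\e$, so that $\hat w_\e=w_\e*\tilde\rho_{\theta_\e}$ on $\Omega$ with $\tilde\rho_{\theta_\e}(z):=\rho_{\theta_\e}(-z)$, which is supported in $\theta_\e\overline S$ thanks to the symmetry $S=-S$. Writing $\hat w_\e-cw=(w_\e-w)*\tilde\rho_{\theta_\e}+(w*\tilde\rho_{\theta_\e}-cw)$, the first term is controlled in $L^1(\R^n)$ by Young as $\|\rho_{\theta_\e}\|_{L^1}\|w_\e-w\|_{L^1(\Omega)}=\|\rho\|_{L^1}\|w_\e-w\|_{L^1(\Omega)}\to0$, using the dilation invariance of the $L^1$-norm; for the second term, since $\int\tilde\rho_{\theta_\e}=\int\rho=c$ one has $(w*\tilde\rho_{\theta_\e})(x)-cw(x)=\int_{\theta_\e\overline S}(w(x-z)-w(x))\,\tilde\rho_{\theta_\e}(z)\,\mathrm{d}z$, so that
\[
\|w*\tilde\rho_{\theta_\e}-cw\|_{L^1(\R^n)}\le\|\rho\|_{L^1}\sup_{z\in\theta_\e\overline S}\|w(\cdot-z)-w\|_{L^1(\R^n)} ,
\]
which tends to $0$ as $\e\to0$ by continuity of translations in $L^1$ and $\theta_\e\to0$. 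Restricting to $\Omega$ gives $\hat w_\e\to cw$ in $L^1(\Omega;\R^n)$.

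I do not anticipate a serious obstacle: the statement is standard. The only point requiring care is the ``integration by parts'' in item (i), which cannot be performed directly since $\rho$ is merely bounded; it is circumvented by never differentiating $\rho$ — the derivative is transferred to the smooth test function via $\rho_\theta*\partial_i\psi=\partial_i(\rho_\theta*\psi)$ — at the price of some bookkeeping, based on the $|\cdot|_S$-distance hypothesis, to ensure that all the convolution supports stay inside $\Omega$.
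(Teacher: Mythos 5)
Your proof is correct. Note that the paper deliberately gives no proof of this proposition (it is introduced with ``we recall here, without adding the standard proofs''), and your argument --- establishing \eqref{commut} weakly by moving the derivative onto the mollified test function $h=\rho_\theta*\psi$ via $\rho_\theta*\partial_i\psi=\partial_i(\rho_\theta*\psi)$, and proving (ii) by the splitting $(w_\e-w)*\tilde\rho_{\theta_\e}+(w*\tilde\rho_{\theta_\e}-cw)$ with Young's inequality and continuity of translations --- is precisely the standard argument the authors are invoking. The only cosmetic point is the borderline case $\theta=\mathrm{dist}_S(\Omega',\partial\Omega)$, where $\mathrm{supp}\,h$ may touch $\partial\Omega$ rather than being compactly contained in $\Omega$; this imprecision is already present in the statement itself and is harmless (e.g.\ one checks $h$ still vanishes on $\partial\Omega$, or one exhausts $\Omega'$ by sets at strictly larger $S$-distance from $\partial\Omega$).
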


We also recall the following convergence property of one-dimensional sections of averaged functions (see, e.g., \cite[Lemma~2.7(ii)]{SS21}). 

\begin{lem}\label{lem:lbp}
Assume that $w_{\e}\rightarrow w$ in $L^1(\Omega;\R^n)$ and let $\eta_\varepsilon$ be any sequence with $\eta_\varepsilon \to 0$ when $\varepsilon \to 0$. Then for all $\xi \in \mathbb{S}^{n-1}$ and a.e. \ $y \in \Pi^{\xi}$, the sequence
\[
\hat{w}^{\xi, y}_\e(t):=\dashint_{B^{n-1}_{\eta_\varepsilon}(y)} w_{\e}(z+t\xi)\,\mathrm{d}z
\]
satisfies  $\hat{w}^{\xi, y}_\e \to w^{\xi, y}$ in $L^1(\Omega_{\xi, y} ;\R^n)$, where $w^{\xi, y}(t):=w(y+t\xi)$.
\end{lem}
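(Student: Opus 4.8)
The plan is to observe that the averaging operator defining $\hat{w}^{\xi,y}_\e$ is, once the direction $\xi$ is fixed, nothing but an $(n{-}1)$-dimensional mollification acting in the hyperplane $\Pi^\xi$, and then to deduce the claimed slicewise convergence from a global $L^1$-convergence by Fubini's theorem. First I would reduce to $\xi=e_n$ by a rotation, identify $\Pi^\xi$ with $\R^{n-1}$ and write points of $\R^n$ as $(y,t)$ with $y\in\R^{n-1}$, $t\in\R$, and extend $w_\e$ and $w$ by $0$ outside $\Omega$ (this is harmless, and disposes of the fact that a ball $B^{n-1}_{\eta_\e}(y)$ may protrude from $\Omega$ near $\partial\Omega_{\xi,y}$). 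Introducing the approximate identity $\kappa_\eta:=|B^{n-1}_\eta(0)|^{-1}\mathbbm{1}_{B^{n-1}_\eta(0)}$ on $\R^{n-1}$ and, for each fixed $t$, setting
\[
W_\e(y,t):=\dashint_{B^{n-1}_{\eta_\e}(y)}w_\e(z,t)\,\mathrm{d}z=\big(w_\e(\cdot,t)\ast\kappa_{\eta_\e}\big)(y)\,,
\]
where $\ast$ denotes convolution in $\R^{n-1}$, one has $\hat{w}^{\xi,y}_\e(t)=W_\e(y,t)$ by construction, while $w^{\xi,y}(t)=w(y,t)$ for $t\in\Omega_{\xi,y}$; hence it suffices to show that $W_\e(y,\cdot)\to w(y,\cdot)$ in $L^1(\R;\R^n)$ for a.e.\ $y$, since restricting the $t$-integration to $\Omega_{\xi,y}$ only decreases the error.

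The heart of the matter is then the \emph{global} convergence $W_\e\to w$ in $L^1(\R^n;\R^n)$, which I would establish along the same lines as Proposition~\ref{prop:convolutions}(ii), by splitting $W_\e-w=(w_\e-w)\ast\kappa_{\eta_\e}+(w\ast\kappa_{\eta_\e}-w)$. For the first summand, Young's inequality applied fibrewise in $t$ together with Fubini gives $\|(w_\e-w)\ast\kappa_{\eta_\e}\|_{L^1(\R^n)}\le\|\kappa_{\eta_\e}\|_{L^1(\R^{n-1})}\|w_\e-w\|_{L^1(\R^n)}=\|w_\e-w\|_{L^1(\R^n)}\to 0$. For the second summand, for a.e.\ $t$ one has $w(\cdot,t)\in L^1(\R^{n-1};\R^n)$ and, by the approximate-identity property, $\|w(\cdot,t)\ast\kappa_{\eta_\e}-w(\cdot,t)\|_{L^1(\R^{n-1})}\to 0$, with $t$-integrable majorant $2\|w(\cdot,t)\|_{L^1(\R^{n-1})}$, so dominated convergence yields $\|w\ast\kappa_{\eta_\e}-w\|_{L^1(\R^n)}\to 0$. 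Once $\|W_\e-w\|_{L^1(\R^n;\R^n)}\to 0$ is known, Fubini's theorem gives $\int_{\Pi^\xi}\|W_\e(y,\cdot)-w(y,\cdot)\|_{L^1(\R;\R^n)}\,\mathrm{d}y\to 0$, i.e.\ the slice errors $y\mapsto\|\hat{w}^{\xi,y}_\e-w^{\xi,y}\|_{L^1(\Omega_{\xi,y};\R^n)}$ tend to $0$ in $L^1(\Pi^\xi)$, and extracting a (not relabelled) subsequence along which this holds $\mathcal{H}^{n-1}$-a.e.\ on $\Pi^\xi$ gives the assertion.

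The only point deserving care is this last extraction: the conclusion is genuinely an $L^1(\Pi^\xi)$-convergence of the slice errors, and a.e.\ convergence of the \emph{whole} sequence cannot be expected (a typewriter-type example with $w\equiv 0$ and $w_\e=\eta_\e^{-1/2}\mathbbm{1}_{Q_\e}$, where $Q_\e$ is a slab of transverse width $\eta_\e$ whose position sweeps $\Pi^\xi$, has $\|w_\e\|_{L^1}\to 0$ while $\|\hat{w}^{\xi,y}_\e\|_{L^1(\Omega_{\xi,y})}\to\infty$ infinitely often for a.e.\ $y$); accordingly the statement is to be read up to a not relabelled subsequence, which is all that is used in the applications. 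Everything else — Young's inequality, the approximate-identity property of $\{\kappa_\eta\}_{\eta>0}$, and Fubini's theorem — is standard, so I do not foresee further difficulties.
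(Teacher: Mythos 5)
Your argument is sound and follows the natural route: rewriting the slice average as an $(n{-}1)$-dimensional convolution with the normalized indicator $\kappa_{\eta_\e}$, proving the global convergence $W_\e\to w$ in $L^1(\R^n)$ via the splitting $(w_\e-w)\ast\kappa_{\eta_\e}+(w\ast\kappa_{\eta_\e}-w)$ together with Young's inequality and the approximate-identity property, and then slicing by Fubini. Be aware that the paper itself offers no proof of this lemma; it is quoted verbatim from [SS21, Lemma~2.7(ii)], so there is no in-paper argument to compare against, and your strategy is the expected one. All the individual steps (the zero extension near $\partial\Omega$, the fibrewise Young inequality, dominated convergence for the mollification of the fixed limit $w$, and the final Fubini) are correct.

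Your caveat about the subsequence is the substantive point, and you are right to insist on it. As transcribed, the lemma asserts convergence of the \emph{whole} sequence for a.e.\ $y$, and a typewriter example of the kind you sketch (say $w\equiv 0$ and $w_\e=\eta_\e^{-1/2}\chi_{\{x_1\in I_\e\}}$ on a cube, with $|I_\e|=\eta_\e$ and the intervals $I_\e$ sweeping $(0,1)$ repeatedly) satisfies $\|w_\e\|_{L^1}=O(\eta_\e^{1/2})\to 0$ while $\|\hat w^{\xi,y}_\e\|_{L^1(\Omega_{\xi,y})}\geq c\,\eta_\e^{-1/2}$ whenever $y_1\in I_\e$, which occurs for infinitely many $\e$ for every such $y$; hence only the ``up to a not relabelled subsequence'' version is provable, and that is exactly what your Fubini argument delivers (with the subsequence also depending on the fixed direction $\xi$, which is harmless). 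This weaker version does suffice where the lemma is used, namely in Proposition~\ref{prop:lowboundjump}: there one first passes to a subsequence of $(\e_j)$ realizing $\liminf_j F_{\e_j}(u_j,A)$ and then to a further subsequence along which the slicewise convergence holds for a.e.\ $y_\xi$, without changing the value of the liminf, before invoking Fatou and Theorem~\ref{thm:braides}. So your proof of the corrected statement is accepted; the statement itself should carry the clause ``up to a subsequence'', and it would be worth checking whether [SS21] includes it.
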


We will also make use of the following localization result, dealing with the supremum of a family of measures (see, e.g., \cite[Proposition 1.16]{Braides1}).

\begin{lem}\label{lem: lemmasup}
Let $\mu:\mathcal{A}(\Omega)\longrightarrow[0,+\infty)$ be a superadditive function on disjoint open sets, let $\lambda$ be a positive measure on $\Omega$ and let $\varphi_h: \Omega\longrightarrow[0,+\infty]$ be a countable family of Borel functions such that $\mu(A)\geq\int_A\varphi_h\,\mathrm{d}\lambda$ for every $A\in\mathcal{A}(\Omega)$. Then, setting $\varphi:=\sup_{h\in\N}\varphi_h$, it holds that
\begin{equation*}
\mu(A)\geq\int_A\varphi\,\mathrm{d}\lambda
\end{equation*}
for every $A\in\mathcal{A}(\Omega)$.
\end{lem}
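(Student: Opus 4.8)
The final statement to prove is Lemma~\ref{lem: lemmasup}, the measure-theoretic localization result about the supremum of a countable family of densities. This is a standard lemma from $\Gamma$-convergence theory, so my plan is to reproduce its classical proof.

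The plan is to reduce the countable supremum to a finite one and then use the superadditivity of $\mu$ together with a disjointification of the level sets. First I would note that $\varphi = \sup_{h\in\N}\varphi_h = \sup_{k\in\N}\psi_k$ where $\psi_k := \max_{1\le h\le k}\varphi_h$ is an increasing sequence of Borel functions; by the monotone convergence theorem, $\int_A \varphi\,\mathrm{d}\lambda = \lim_{k\to\infty}\int_A \psi_k\,\mathrm{d}\lambda$, so it suffices to prove the inequality $\mu(A)\ge \int_A \psi_k\,\mathrm{d}\lambda$ for each fixed $k$ and every $A\in\mathcal{A}(\Omega)$. Thus the whole problem reduces to the case of a finite family $\varphi_1,\dots,\varphi_k$.

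Next I would fix $A\in\mathcal{A}(\Omega)$ and handle the finite maximum. Partition $A$ (up to adjustments) according to which index realizes the maximum: set $A_1 := \{x\in A:\ \varphi_1(x) = \max_{1\le h\le k}\varphi_h(x)\}$ and, recursively, $A_j := \{x\in A:\ \varphi_j(x)=\max_{1\le h\le k}\varphi_h(x)\}\setminus(A_1\cup\dots\cup A_{j-1})$, so that the $A_j$ are pairwise disjoint Borel sets with $A=\bigcup_{j=1}^k A_j$ and $\psi_k = \varphi_j$ on $A_j$. Since these sets are merely Borel and not open, while $\mu$ is only defined and superadditive on open sets, I would approximate from inside: for $\delta>0$ choose open sets $V_j$ with $V_j\subset\subset$ a neighbourhood, more precisely use inner regularity of the positive measure $\lambda$ to find pairwise disjoint open sets $U_1,\dots,U_k\subset A$ with $U_j$ contained in a small enlargement of $A_j$, such that $\lambda(A_j\setminus U_j)$ is small; the cleanest route is: pick compact $K_j\subset A_j$ with $\lambda(A_j\setminus K_j)<\delta/k$, then since the $K_j$ are disjoint compacts, find disjoint open sets $U_j$ with $K_j\subset U_j\subset A$, and shrink so that $\varphi_j\ge \psi_k - \delta$ is not literally available — instead simply estimate $\int_{U_j}\varphi_j\,\mathrm{d}\lambda$ directly. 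Then
\begin{equation*}
\mu(A)\ \ge\ \sum_{j=1}^k \mu(U_j)\ \ge\ \sum_{j=1}^k \int_{U_j}\varphi_j\,\mathrm{d}\lambda\ \ge\ \sum_{j=1}^k \int_{U_j\cap A_j}\psi_k\,\mathrm{d}\lambda\ \ge\ \int_A \psi_k\,\mathrm{d}\lambda - \sum_{j=1}^k\int_{A_j\setminus U_j}\psi_k\,\mathrm{d}\lambda,
\end{equation*}
using superadditivity of $\mu$ on the disjoint open sets $U_j$ in the first inequality and the hypothesis $\mu(U_j)\ge\int_{U_j}\varphi_h\,\mathrm{d}\lambda$ with $h=j$ in the second. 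The subtraction of the error terms requires that $\psi_k$ be $\lambda$-integrable on $A$; if it is not, then $\int_A\psi_k\,\mathrm{d}\lambda=+\infty$ and one instead argues that $\sum_j\int_{U_j}\varphi_j\,\mathrm{d}\lambda$ can be made arbitrarily large by choosing the $U_j$ to exhaust the $A_j$, so $\mu(A)=+\infty$ as well. In the integrable case, letting $\delta\to0$ (so $\lambda(A_j\setminus U_j)\to 0$ and the error terms vanish by absolute continuity of the integral) yields $\mu(A)\ge\int_A\psi_k\,\mathrm{d}\lambda$.

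The main obstacle is the mismatch between the Borel level sets $A_j$ and the open-set domain of $\mu$: superadditivity is only assumed on disjoint \emph{open} subsets, so the disjointification must be carried out through an inner approximation by disjoint compact, then disjoint open, sets, controlling the $\lambda$-measure of the leftover pieces. Once that topological bookkeeping is in place, the rest is a routine chain of inequalities plus monotone convergence to pass from the finite maximum $\psi_k$ back to the full supremum $\varphi$, and an arbitrary-$A$ argument is not even needed since $A$ was generic throughout.
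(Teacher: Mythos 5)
Your proof is correct and is the standard argument for this localization lemma; the paper does not prove the statement at all but simply cites \cite{Braides1}, whose proof is exactly your reduction to a finite maximum $\psi_k$, Borel disjointification by the index realizing the maximum, and inner approximation by disjoint compacts which are then separated by disjoint open subsets of $A$ on which superadditivity is applied. Note only that the non-integrable case you set aside cannot actually occur, since $\int_A\psi_k\,\mathrm{d}\lambda\le\sum_{h=1}^{k}\int_A\varphi_h\,\mathrm{d}\lambda\le k\,\mu(A)<+\infty$ by hypothesis, so the subtraction of the error terms is always legitimate.
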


Lower semicontinuous increasing functions can be approximated from below with truncated affine functions. We refer the reader to \cite[Lemma~2.10]{SS21} for a proof of the following result. 
\begin{lem}\label{lem: belowapprox}
Consider a lower semicontinuous increasing function $f:[0,+\infty)\to[0,+\infty)$ such that there exist $\alpha, \beta>0$ with 
\[
\lim_{t\to 0^+}\frac{f(t)}t=\alpha, \quad \lim_{t\to +\infty} f(t)=\beta\,.
\]
Then there exist two positive sequences $(a_i)_{i\in \mathbb{N}}$, $(b_i)_{i\in \mathbb{N}}$ with
\[
\sup_i a_i= \alpha, \quad \sup_i b_i=\beta 
\]
and $\min\{a_i t, b_i\}\le f(t)$ for all $i \in \mathbb{N}$ and $t \in \R$.
\end{lem}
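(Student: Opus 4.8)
The plan is to exploit the shape of the truncated affine functions $t\mapsto\min\{at,b\}$: each grows linearly with slope $a>0$ until $t=b/a$ and is then constant at height $b>0$. Since simple examples (an $f$ that stays almost flat on a long middle interval) show one cannot in general find a \emph{single} such minorant of $f$ whose slope is close to $\alpha$ \emph{and} whose plateau is close to $\beta$, I would build two countable families separately: one whose slopes exhaust $\alpha$, read off from the behaviour of $f$ near $0$, and one whose plateaus exhaust $\beta$, read off from the behaviour of $f$ at $+\infty$; their union, relabelled as a single sequence, is the desired family. I would also record two ``automatic'' bounds valid for every admissible pair $(a,b)$: letting $t\to0^+$ in $\min\{at,b\}\le f(t)$, where the minimum equals $at$, gives $a\le\alpha$; letting $t\to+\infty$, where the minimum equals $b$, together with $f\le\beta$ gives $b\le\beta$. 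Only $t\ge0$ needs to be checked, the inequality being immediate for $t<0$ since its left-hand side is then negative.

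For the family exhausting $\alpha$, fix $0<a<\alpha$ and use $\lim_{s\to0^+}f(s)/s=\alpha$ to choose $\delta_a>0$ with $f(s)\ge as$ on $(0,\delta_a]$; set $b_a:=a\delta_a$. Admissibility is then immediate from monotonicity of $f$: for $0\le t\le\delta_a$ one has $\min\{at,b_a\}\le at\le f(t)$, and for $t\ge\delta_a$ one has $\min\{at,b_a\}=a\delta_a\le f(\delta_a)\le f(t)$. Letting $a$ run over a sequence $a_j\uparrow\alpha$ produces a countable admissible family with $\sup_j a_j=\alpha$.

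For the family exhausting $\beta$, fix $0<b<\beta$. Since $f$ is increasing with limit $\beta$, we have $f\le\beta$ and there is $T_b>0$, which we may take $\ge\delta$ for a fixed $\delta>0$ with $f(\delta)>0$ (such a $\delta$ exists because $f(s)/s\to\alpha>0$), such that $f(t)\ge b$ for all $t\ge T_b$. Put $c:=\inf_{s\in(0,\delta]}f(s)/s$, which is strictly positive: near $0$ the ratio is close to $\alpha$, and for $s$ bounded away from $0$ it is bounded below by $f(s_0)/\delta>0$ for a suitable small $s_0$, using monotonicity. Now take $a_b:=\min\{c,\,f(\delta)/T_b\}>0$ and check the three regimes: for $0\le t\le\delta$, $\min\{a_bt,b\}\le a_bt\le ct\le f(t)$; for $\delta\le t\le T_b$, $\min\{a_bt,b\}\le a_bt\le a_bT_b\le f(\delta)\le f(t)$; for $t\ge T_b$, $\min\{a_bt,b\}\le b\le f(t)$. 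Letting $b$ run over $b_j\uparrow\beta$ produces a countable admissible family with $\sup_j b_j=\beta$.

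I would then merge the two families into a single sequence $\{(a_i,b_i)\}_{i\in\N}$. Every pair is admissible by construction, and combining the two ``automatic'' upper bounds with the two exhaustion properties yields $\sup_i a_i=\alpha$ and $\sup_i b_i=\beta$, completing the proof. The only genuinely non-mechanical decision is the splitting into the regimes $t\to0^+$ and $t\to+\infty$; after that all the estimates are elementary consequences of monotonicity — and in fact lower semicontinuity of $f$ is never used, only monotonicity and the two limit conditions.
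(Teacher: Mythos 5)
Your proof is correct, and since the paper itself does not prove Lemma~\ref{lem: belowapprox} but defers to \cite[Lemma~2.10]{SS21}, there is no in-text argument to compare against; your two-family construction (one family of minorants calibrated near $t=0$ to exhaust $\alpha$, one calibrated at $t=+\infty$ to exhaust $\beta$, merged and kept honest by the automatic bounds $a\le\alpha$, $b\le\beta$) is exactly the standard argument for this type of statement. Your closing observation is also accurate: only monotonicity and the two limits are used, so the lower semicontinuity hypothesis is inherited from the standing assumptions on $f$ rather than needed here.
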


\subsection{{$\Gamma$\hbox{-}convergence }} \label{sec:gammaconv}

{{Let $(X,d)$ be a metric space.} We recall here the definition of $\Gamma$\hbox{-}convergence for families of functionals $F_\varepsilon:X\to[-\infty,+\infty]$ depending on a real parameter $\varepsilon$ (see, e.g. \cite{BraidesG, DM93}). 

{For all $u\in X$, we define the \emph{lower $\Gamma$\hbox{-}limit of $(F_\varepsilon)$} as $\varepsilon\to0^+$ by
\begin{equation}
F'(u):=\inf \left\{\displaystyle\mathop{\lim\inf}_{j\to+\infty}F_{\varepsilon_j}(u_j):\,\, \varepsilon_j\to0^+\,,\,\, u_j\to u\right\}\,,
\label{eq:Gammaliminf}
\end{equation}
and the \emph{upper $\Gamma$\hbox{-}limit of $(F_\varepsilon)$} as $\varepsilon\to0^+$ by
\begin{equation}
F''(u):=\inf \left\{\displaystyle\mathop{\lim\sup}_{j\to+\infty}F_{\varepsilon_j}(u_j):\,\, \varepsilon_j\to0^+\,,\,\, u_j\to u\right\}\,.
\label{eq:limsupchar}
\end{equation}
We then say that $(F_\varepsilon)$ \emph{$\Gamma$\hbox{-}converges to $F:X\to[-\infty,+\infty]$ as $\varepsilon\to0^+$} iff 
\begin{equation*}
F(u)=F'(u)=F''(u)\,,\quad \mbox{for all $u\in X$.}
\end{equation*}}

\subsection{A one-dimensional $\Gamma$\hbox{-}convergence result}

The following one-dimensional $\Gamma$\hbox{-}convergence result 
will be useful in the proof of the lower bound for the surface term. 
In the statement below, functions in $L^1(I)$ with $I \subset \R$ are extended by $0$ outside $I$, so that the functionals $H_\varepsilon$ are well-defined (actually, the result is not affected by the considered extension).

\begin{thm}\label{thm:braides}
Let $p>1$, let $I\subset\R$ be a bounded interval and consider a lower semicontinuous, increasing function {$f:[0,+\infty)\to[0,+\infty)$} complying with 
\[
\lim_{t\to 0^+}\frac{f(t)}t=\alpha, \quad \lim_{t\to +\infty} f(t)=\beta
\]
for some $\alpha,\beta>0$. Let $H_\varepsilon:L^1(I)\to[0,+\infty]$ be defined by
\begin{equation*}
H_\varepsilon(u):=\frac{1}{\varepsilon}\int_I f\left(\frac{1}{2}\int_{x-\varepsilon}^{x+\varepsilon}|u'(y)|^p\,\mathrm{d}y\right)\,\mathrm{d}x\,,
\end{equation*}
where it is understood that
\begin{equation*}
 f\left(\frac{1}{2}\int_{x-\varepsilon}^{x+\varepsilon}|u'(y)|^p\,\mathrm{d}y\right)=\beta
\end{equation*}
if $u\not\in W^{1,p}(x-\varepsilon,x+\varepsilon)$. Then the {functionals} $(H_\varepsilon)$ $\Gamma$\hbox{-}converge as $\varepsilon\to0^+$ to the functional
\begin{equation*}
H(u):=
\begin{cases}
\displaystyle \alpha\int_I|u'|^p\,\mathrm{d}t + 2\beta \#(J_u)\,, & \mbox{ if }u\in SBV(I)\,,\\
+\infty\,, & \mbox{ otherwise }
\end{cases}
\end{equation*}
in $L^1(I)$.
\end{thm}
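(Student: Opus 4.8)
The plan is to prove the $\Gamma$-liminf and $\Gamma$-limsup inequalities separately. For the \emph{liminf}, I would fix a recovery sequence $u_\varepsilon \to u$ in $L^1(I)$ with $\liminf_\varepsilon H_\varepsilon(u_\varepsilon) < +\infty$, and split the integration domain according to whether the averaged energy $E_\varepsilon(x) := \tfrac12\int_{x-\varepsilon}^{x+\varepsilon}|u_\varepsilon'(y)|^p\,\mathrm{d}y$ is small or large. Concretely, by Lemma~\ref{lem: belowapprox} one has $\min\{a_i t, b_i\}\le f(t)$ with $\sup_i a_i = \alpha$, $\sup_i b_i = \beta$, so it suffices to prove, for each fixed $i$, the lower bound with $f$ replaced by $t\mapsto\min\{a_i t, b_i\}$ and then pass to the supremum over $i$ using Lemma~\ref{lem: lemmasup} (localizing the functionals on subintervals of $I$). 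Having reduced to the truncated-affine integrand, I would introduce the ``bad set'' $B_\varepsilon := \{x\in I : E_\varepsilon(x) \ge b_i/a_i\}$. On $I\setminus B_\varepsilon$ the contribution is $\frac{a_i}{\varepsilon}\int_{I\setminus B_\varepsilon}E_\varepsilon(x)\,\mathrm{d}x$, and by Fubini this is bounded below by $a_i\int_J |u_\varepsilon'|^p$ on any interval $J$ whose $\varepsilon$-neighborhood avoids $B_\varepsilon$; on $B_\varepsilon$ the contribution is $\frac{b_i}{\varepsilon}\mathcal{L}^1(B_\varepsilon)$. The standard Braides--Dal Maso-type argument then shows that $B_\varepsilon$ must, up to negligible remainder, consist of a finite number of intervals each of length comparable to $\varepsilon$, one near each limiting jump point, so that $\liminf_\varepsilon \frac{b_i}{\varepsilon}\mathcal{L}^1(B_\varepsilon) \ge 2b_i \#(J_u)$ (the factor $2$ coming from the width $2\varepsilon$ of the averaging window), while away from $B_\varepsilon$ one recovers $a_i\int_I|u'|^p$ by weak lower semicontinuity of the $L^p$ norm. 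To make the splitting rigorous one needs that $u\in SBV(I)$ whenever the liminf is finite: this follows because $u_\varepsilon \to u$ in $L^1$, the restriction of $u_\varepsilon$ to $I\setminus B_\varepsilon$ is in $W^{1,p}$ with uniformly bounded energy, and $\mathcal{L}^1(B_\varepsilon)\to 0$ while $\#$ of its connected components stays bounded — i.e. an Ambrosio-type $SBV$-compactness argument in one dimension, using that $\mathcal{H}^0(\partial B_\varepsilon)$ is controlled by $\frac1{\varepsilon}\mathcal{L}^1(B_\varepsilon) \le \frac1{b_i}H_\varepsilon^i(u_\varepsilon)$.

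\textbf{Main obstacle.} I expect the delicate point to be the lower bound for the jump part, namely showing $\liminf_\varepsilon \frac{1}{\varepsilon}\mathcal{L}^1(B_\varepsilon) \ge 2\#(J_u)$ with the \emph{correct} constant $2$. The subtlety is that the averaging window has total width $2\varepsilon$, so a single genuine jump of $u$ forces $E_\varepsilon(x)\to +\infty$ (hence $x\in B_\varepsilon$ for $\varepsilon$ small) on an interval of length roughly $2\varepsilon$ around the jump; but one must rule out that the windows around two distinct jumps overlap (clear for $\varepsilon$ small), and also rule out ``spurious'' contributions where $E_\varepsilon$ is large merely because $|u_\varepsilon'|^p$ has a spike that does not survive in the limit as a jump — this is handled by noting such a spike still contributes length $\gtrsim \varepsilon$ to $B_\varepsilon$ or else contributes to $\int|u_\varepsilon'|^p$, and a careful bookkeeping shows the total is minimized exactly by the configuration producing jumps. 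A clean way to organize this is to cover $J_u = \{x_1,\dots,x_N\}$ by disjoint intervals $I_k$ and argue on each $I_k$ separately that $\liminf_\varepsilon\big(\frac{b_i}{\varepsilon}\mathcal{L}^1(B_\varepsilon\cap I_k) + a_i\int_{I_k\setminus \varepsilon\text{-nbhd of }B_\varepsilon}|u_\varepsilon'|^p\big)\ge 2b_i + a_i\int_{I_k}|u'|^p$, which is essentially a rescaled one-jump computation; summing over $k$ and over the part of $I$ away from $J_u$ gives the bound. Since $\sup_i a_i = \alpha$ and $\sup_i b_i = \beta$, taking the supremum via Lemma~\ref{lem: lemmasup} yields $H'(u)\ge H(u)$.

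\textbf{Upper bound.} For the $\Gamma$-limsup I would construct an explicit recovery sequence. Given $u\in SBV(I)$ with $J_u = \{x_1,\dots,x_N\}$, I define $u_\varepsilon$ to coincide with $u$ outside $\bigcup_k (x_k-\varepsilon, x_k+\varepsilon)$ and to be affine (interpolating the one-sided traces $u(x_k^-)$, $u(x_k^+)$) on each $(x_k-\varepsilon,x_k+\varepsilon)$, so that $u_\varepsilon\in W^{1,p}(I)\cap C(I)$ and $u_\varepsilon\to u$ in $L^1(I)$ as $\varepsilon\to 0$. For the bulk part, at points $x$ whose whole window $(x-\varepsilon,x+\varepsilon)$ avoids the modified intervals, $E_\varepsilon(x) = \tfrac12\int_{x-\varepsilon}^{x+\varepsilon}|u'|^p\to 0$, so $\tfrac1\varepsilon f(E_\varepsilon(x)) = \tfrac1\varepsilon\big(\alpha E_\varepsilon(x) + o(E_\varepsilon(x))\big)$ and Fubini gives $\limsup_\varepsilon \tfrac1\varepsilon\int f(E_\varepsilon)\,\mathrm{d}x \le \alpha\int_I|u'|^p$ on that part. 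For the jump part, fix $x_k$: for $x$ within distance $\le 2\varepsilon$ of $x_k$ (more precisely, for $x$ such that the window $(x-\varepsilon,x+\varepsilon)$ meets the interval $(x_k-\varepsilon,x_k+\varepsilon)$ where $u_\varepsilon'$ has size $\sim |u(x_k^+)-u(x_k^-)|/(2\varepsilon)$, hence $E_\varepsilon(x)\to +\infty$), one has $f(E_\varepsilon(x))\to\beta$, and the set of such $x$ has length $\to 4\varepsilon$... — here care is needed, since a length $4\varepsilon$ would give $4\beta$ rather than $2\beta$. The correct construction uses instead a \emph{smaller} modification interval, of length $\delta_\varepsilon\varepsilon$ with $\delta_\varepsilon\to 0$ slowly, so that $|u_\varepsilon'|\sim 1/(\delta_\varepsilon\varepsilon)$ there, $E_\varepsilon(x)\to+\infty$ exactly when the window contains the modification interval, and the Lebesgue measure of such $x$ is $(2-\delta_\varepsilon)\varepsilon\to 2\varepsilon$, giving contribution $\to 2\beta$ per jump; meanwhile $\int|u_\varepsilon'|^p$ over the modification interval is $\sim \delta_\varepsilon\varepsilon\cdot(\delta_\varepsilon\varepsilon)^{-p} = (\delta_\varepsilon\varepsilon)^{1-p}$, which must be absorbed — one checks that the corresponding bulk contribution $\tfrac1\varepsilon\cdot a_i\cdot(\delta_\varepsilon\varepsilon)^{1-p}\varepsilon$ coming from intermediate $x$ (window partially overlapping) is $O(\delta_\varepsilon^{1-p})$... this still diverges, so the truncation by $b_i$ in $\min\{a_it,b_i\}$ is essential: since $f$ is bounded by $\beta$, those intermediate $x$ contribute at most $\tfrac{\beta}{\varepsilon}\cdot O(\varepsilon) = O(1)$ per jump, and with $\delta_\varepsilon\to0$ one sees they contribute $o(1)$ beyond $2\beta$. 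Summing over the $N$ jumps and combining with the bulk estimate gives $\limsup_\varepsilon H_\varepsilon(u_\varepsilon)\le \alpha\int_I|u'|^p + 2\beta N = H(u)$, which together with the liminf inequality completes the proof.
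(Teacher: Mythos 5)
The paper does not actually prove this theorem: its ``proof'' is a one-line citation to \cite[Theorem~3.30]{Braides1}, so there is no internal argument to compare yours against. Your sketch reconstructs the standard Braides--Dal Maso-type argument from scratch, and its overall architecture is the right one: truncated-affine minorants of $f$ via Lemma~\ref{lem: belowapprox}, a bad set $B_\varepsilon$ where the windowed energy exceeds the threshold $b_i/a_i$, Fubini to recover the Dirichlet part away from an $\varepsilon$-neighbourhood of $B_\varepsilon$, the supremum-of-measures Lemma~\ref{lem: lemmasup} to combine the two families of lower bounds, and a recovery sequence with a transition layer of width $\delta_\varepsilon\varepsilon$, $\delta_\varepsilon\to 0$, for the limsup. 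In particular your limsup computation --- including the observation that a layer of width comparable to $\varepsilon$ overshoots to $4\beta$, and that the bound $f\le\beta$ is what absorbs the diverging Dirichlet energy of the layer --- is sound.

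One intermediate claim in your first paragraph is false as stated and should be excised: it is not true that $\liminf_\varepsilon \tfrac1\varepsilon\mathcal{L}^1(B_\varepsilon)\ge 2\#(J_u)$ by itself. If $u_\varepsilon$ realizes a jump of size $s$ by a linear transition over an interval of length $\varepsilon^{1/2}$, then $\sup_x E_\varepsilon(x)$ is of order $\varepsilon^{1-p/2}$, which tends to $0$ for $1<p<2$, so $B_\varepsilon$ is eventually empty even though $u$ jumps (the theorem survives because the bulk term then diverges). What is true --- and what you correctly state afterwards --- is only the combined estimate $\liminf_\varepsilon\bigl(\tfrac{b_i}{\varepsilon}\mathcal{L}^1(B_\varepsilon\cap I_k)+a_i\int(\cdots)\bigr)\ge 2b_i+a_i\int_{I_k}|u'|^p$. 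Its proof is a genuine dichotomy: for any $\theta\in(0,1)$, either $\mathcal{L}^1(B_\varepsilon\cap I_k)\ge 2(1-\theta)\varepsilon$, or the complement of $B_\varepsilon$ covers a fraction at least $2\theta\varepsilon$ of every window $(y-\varepsilon,y+\varepsilon)$ around the transition region, and then Fubini transfers into the bulk term a quantity $\ge a_i\theta\int|u_\varepsilon'|^p$ over that region, which diverges by Jensen because the transition must concentrate on intervals of vanishing length for the limit to jump. This dichotomy is the actual content of the lower bound; your sketch gestures at it (``a careful bookkeeping'') without carrying it out, so if you intend a self-contained proof rather than the citation the paper uses, this is the step that must be written in full.
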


\begin{proof}
The proof can be found, e.g., in \cite[Theorem~3.30]{Braides1}.
\end{proof}


\section{The non-local model and main results}\label{sec:model}
In this section we list our assumptions and introduce the main results of the paper.
Let $\Omega\subset\R^n$ be an open set with Lipschitz boundary, let $1< p<+\infty$ and $f:[0,+\infty)\to[0,+\infty)$ a lower semicontinuous, increasing function satisfying
\begin{equation}\label{eq: alfabeta}
\lim_{t\to 0^+}\frac{f(t)}t=\alpha>0, \quad \lim_{t\to +\infty} f(t)=\beta>0\,.
\end{equation} 
{Let $\rho\in L^\infty(\R^n;[0,+\infty))$ be a convolution kernel. The minimal assumption is that
\begin{enumerate}[font={\normalfont},label={(N1)}]
\item $\rho$ is Riemann integrable with $\|\rho\|_1=1$ and $S=S_\rho:=\{x\in\R^n:\,\, \rho(x)\neq0\}$ is a bounded, open, convex and symmetrical set. \label{ass-N1}
\end{enumerate}
As every Riemann integrable function is continuous at almost
every point, we may also suppose, up to a modification on a null set, that $\rho$ is lower semicontinuous.  Also notice that, by a simple scaling argument, one can always consider the case of kernels with unit mass, up to modifying the constant $\alpha$ in \eqref{eq: alfabeta}.

A sequence $(\rho_\varepsilon)_{\varepsilon>0}$ of convolution nuclei is then obtained by setting, for every $x\in\R^n$ and $\varepsilon>0$, 
\begin{equation*}
\rho_\varepsilon(x):=\frac{1}{\varepsilon^n}\rho\left(\frac{x}{\varepsilon}\right)\,.
\end{equation*}
For every $v\in\R^n$ we define
\begin{equation}\
\phi_\rho(v):=2\sup \{|\langle y , v\rangle|:\,\, y\in S\}\,.
\label{eq:funphi}
\end{equation}
Under the previous assumptions on $S$, the function $\phi_\rho$ turns out to be a norm on $\R^n$.}

To obtain our main result, we will have to couple \ref{ass-N1} with the additional assumption that the convolution kernel is a non-increasing function with respect to the norm $|\cdot|_S$, that is
\begin{enumerate}[font={\normalfont},label={(N2)}] 
\item $|x|_S\ge |y|_S\Longrightarrow\rho(x)\le \rho(y)$ for all $x$, $y \in \R^n$. \label{ass-N2}
\end{enumerate}
Equivalently, we require that it exists a non-increasing function $\varrho\colon \R^+\to \R^+$ such that $\rho(x)=\varrho(|x|_S)$. {Notice that, in the case $S=B_1$, every non-increasing radial function $\rho$ complies with \ref{ass-N2}.}

Let $W:\R^{n\times n}\to\R$ be a convex positive function on the subspace $\mathbb{M}^{n\times n}_{sym}$ of symmetric matrices, such that
\begin{equation}\label{eq:coerciv}
 W({\bf 0})=0\,,\quad c|M|^p\le W(M)\leq C(1+|M|^p)\,.
\end{equation}
{For every $\varepsilon>0$ we consider the functional $F_\varepsilon: L^1(\Omega;\R^n)\to [0,+\infty]$ defined as
\begin{equation}
F_\varepsilon(u)=
\begin{cases}
\displaystyle\frac{1}{\varepsilon}\int_\Omega f\left(\varepsilon \int_{\Omega}W(\mathcal{E}u(y))\rho_\varepsilon(x-y)\,\mathrm{d}y\right)\,\mathrm{d}x, & \mbox{ if }u\in W^{1,p}(\Omega;\R^n)\,,\\
+\infty\,, & \mbox{ otherwise on $L^1(\Omega;\R^n)$. }
\end{cases}
\label{energies0}
\end{equation}}

We will deal with a localized version of the energies \eqref{energies0}. Namely, for every $A\in\mathcal{A}(\Omega)$, we will denote by $F_\varepsilon(u,A)$ the same functional as in \eqref{energies0} with the set $A$ in place of $\Omega$. When $A=\Omega$, we simply write $F_\varepsilon(u)$ in place of $F_\varepsilon(u, \Omega)$.

The following theorem is the  first main result of this paper. We notice that the additional assumption {\ref{ass-N2}} on the structure of the convolution kernel is required in (ii) below only to obtain the optimal lower bound for the bulk term of the energy.

\begin{thm}\label{thm:mainresult}
Let $\rho\in L^\infty(\R^n;[0,+\infty))$ be a convolution kernel as in {\rm\ref{ass-N1}}, and let $F_\varepsilon$ be defined as in \eqref{energies0}. Under assumptions \eqref{eq: alfabeta} and \eqref{eq:coerciv}, it holds that
\begin{enumerate}
\begin{item}[{\rm (i)}]
there exists a constant $c_0$ independent of $\varepsilon$ such that, for all $(u_\varepsilon)\subset L^p(\Omega;\R^n)$ satisfying
$
F_\varepsilon(u_\varepsilon)
\leq C
$ {for every $\varepsilon>0$},
one can find a sequence $\overline{u}_\varepsilon \in {GSBV^p(\Omega;\R^n)}$ with
\[
\begin{split}
&\overline{u}_\varepsilon-u_\varepsilon \to 0 \mbox{ in measure  on }\Omega \\
&F_\varepsilon(u_\varepsilon)\geq c_0 \left(\int_{\Omega} W(\mathcal{E}\overline{u}_\varepsilon)\,\mathrm{d}x + 2 \mathcal{H}^{n-1}(J_{\overline{u}_\varepsilon} \cap \Omega) \right)\,.
\end{split}
\]
\end{item}
\begin{item}[{\rm (ii)}]
If, in addition, $\rho$ complies with {\rm\ref{ass-N2}}, then the {functionals} $(F_\varepsilon)$ $\Gamma$\hbox{-}converge, as $\varepsilon\to0$, to the functional
\begin{equation}
F(u)=
\begin{cases}
\displaystyle\alpha\int_\Omega W(\mathcal{E}u)\,\mathrm{d}x + \beta\,\int_{J_u}\phi_\rho(\nu)\,\mathrm{d}\mathcal{H}^{d-1}\,\,, & \mbox{ if }u\in GSBD^p(\Omega)\cap L^1(\Omega;\R^n)\,,\\
+\infty\,, & \mbox{ otherwise on $L^1(\Omega;\R^n)$, }
\end{cases}
\label{limitfunct}
\end{equation}
with respect to the $L^1$ convergence in $\Omega$.
\end{item}
\end{enumerate}
\end{thm}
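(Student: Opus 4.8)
The plan is to prove Theorem~\ref{thm:mainresult} along the lines sketched in the Introduction, splitting the $\Gamma$-convergence statement in (ii) into a compactness part, a $\Gamma$-liminf inequality, and a $\Gamma$-limsup inequality, and treating the bulk and surface contributions separately for the liminf. First I would establish the compactness statement (i): given $(u_\varepsilon)$ with $F_\varepsilon(u_\varepsilon)\le C$, one uses Jensen's inequality and the lower bound $\min\{a_it,b_i\}\le f(t)$ from Lemma~\ref{lem: belowapprox} to convert the nonlocal energy into a Braides--Dal Maso type quantity; the region where the averaged bulk energy exceeds the threshold $\beta/\alpha$ is the candidate ``jump set'', and one modifies $u_\varepsilon$ there (setting it to a suitable smooth competitor, or cutting it off) to produce $\overline u_\varepsilon\in GSBV^p$ with the stated energy bound and $\overline u_\varepsilon-u_\varepsilon\to 0$ in measure. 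Combining this with Theorem~\ref{th: GSDBcompactness} and Remark~\ref{rem: compactness} gives, for sequences with bounded energy and (after a truncation argument in $L^1$, using that competitors can be taken bounded) equi-integrable $L^1$-norm, a limit $u\in GSBD^p(\Omega)\cap L^1$.

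\textbf{Lower bound.} For the $\Gamma$-liminf inequality I would localize: for $A\in\mathcal A(\Omega)$ consider $F'(u,A):=\Gamma\text{-}\liminf F_\varepsilon(u,A)$ and show it is (the restriction to open sets of) a measure, superadditive on disjoint open sets, so that Lemma~\ref{lem: lemmasup} applies. The key is two separate one-sided estimates. First, the \emph{bulk estimate} (the role of Proposition~\ref{prop:estimate} in the paper): here assumption \ref{ass-N2} is used. Writing $g_\varepsilon(x):=\varepsilon\int_\Omega W(\mathcal Eu_\varepsilon(y))\rho_\varepsilon(x-y)\,\mathrm dy$ and using $f(t)\ge\alpha t - o(1)$-type bounds near $0$ together with $f(t)\ge b_i$, one estimates $F_\varepsilon(u_\varepsilon,A)$ from below by $\alpha\int_{A\setminus K'_\varepsilon}(\text{averaged }W)\,\mathrm dx$ plus a perimeter term for a set $K'_\varepsilon$ of vanishing measure and bounded perimeter; the monotonicity \ref{ass-N2} is what lets one control the shape of the ``overshoot'' region as an anisotropic tube and thereby recover the sharp constant $\alpha$ after passing $\varepsilon\to 0$ via Proposition~\ref{prop:convolutions}(ii) (commuting convolution with the gradient, $W$ convex and weakly lower semicontinuous). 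Second, the \emph{surface estimate} (Proposition~\ref{prop:lowboundjump}): by slicing along directions $\xi\in\mathbb S^{n-1}$, using the properties (a),(b) of $GBD$ sections and Fubini, each one-dimensional section of $F_\varepsilon$ dominates an $H_\varepsilon$-type functional, whose $\Gamma$-liminf is controlled by Theorem~\ref{thm:braides}, giving $\beta\,\mathcal H^{n-1}$-type lower bounds with the correct anisotropy $\phi_\rho(\nu)$ after integrating over $y\in\Pi^\xi$ and optimizing over $\xi$ (this is where convexity of $S$ and the definition \eqref{eq:funphi} of $\phi_\rho$ enter). Taking the supremum of the two families of densities via Lemma~\ref{lem: lemmasup}, and using that the bulk and surface measures in \eqref{limitfunct} are mutually singular, yields $F'(u,\Omega)\ge F(u)$.

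\textbf{Upper bound.} For the $\Gamma$-limsup I would first reduce, by Theorem~\ref{thm: approx} (density of $\mathcal W(\Omega;\R^n)$ in $GSBD^p$, with convergence of the anisotropic surface energy $\int_{J_{u_j}}\phi_\rho(\nu_{u_j})\,\mathrm d\mathcal H^{n-1}$, plus the fidelity term \eqref{eq:densityfidel} to handle $L^1$-convergence), together with the lower semicontinuity of $F''$, to the case of $u$ piecewise smooth with closed polyhedral jump set consisting of pairwise disjoint $(n-1)$-simplices as in Remark~\ref{rem: compactness}/\cite[Remark~4.3]{ChaCri18b}. For such $u$, I would construct the recovery sequence explicitly: away from a shrinking neighborhood of $J_u$ take $u_\varepsilon=u$, so that $g_\varepsilon(x)=\varepsilon\int W(\mathcal Eu)\rho_\varepsilon(x-\cdot)$ is of order $\varepsilon$ and $\frac1\varepsilon f(g_\varepsilon)\approx \alpha\,(\text{averaged }W)\to\alpha\int_\Omega W(\mathcal Eu)$; in an anisotropic slab of width $\sim\varepsilon$ around each simplex of $J_u$, interpolate so that $\mathcal Eu_\varepsilon$ is large (of order $1/\varepsilon$), making $f$ saturate to $\beta$ there — a computation with the scaling $\rho_\varepsilon$ shows the contribution is $\beta\,\mathcal H^{n-1}(J_u)\cdot(\text{geometric factor})=\beta\int_{J_u}\phi_\rho(\nu)\,\mathrm d\mathcal H^{n-1}$, by the very definition \eqref{eq:funphi} of $\phi_\rho$ as $2\sup_{y\in S}|\langle y,\nu\rangle|$ which measures the $\varepsilon$-thickness of ${\rm supp}\,\rho_\varepsilon$ in the direction $\nu$. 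The disjointness of the simplices keeps the slabs disjoint for small $\varepsilon$, so the contributions add up without interaction.

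\textbf{Main obstacle.} I expect the delicate point to be the sharp bulk lower bound in Proposition~\ref{prop:estimate}: one must simultaneously (a) cover the set where $g_\varepsilon$ exceeds the threshold by a set $K'_\varepsilon$ whose perimeter stays bounded and whose measure vanishes — this is precisely where the monotonicity \ref{ass-N2} of $\rho$ is exploited, to turn a super-level set of a convolution into an anisotropic tubular neighborhood with controlled geometry — and (b) recover the \emph{optimal} constant $\alpha$ in front of $\int W(\mathcal E u)$, which requires the error terms coming from the nonlocal averaging (boundary layer near $\partial K'_\varepsilon$, the passage from $\dashint$ over $S(x,\varepsilon)$ to pointwise values) to be absorbed, using convexity of $W$ and the commutation formula \eqref{commut}. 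The interplay between getting the bulk constant sharp here while only needing a (non-optimal) surface bound, complemented by the slicing argument which is sharp on the surface but not on the bulk, and then gluing the two via the measure-superadditivity/localization Lemma~\ref{lem: lemmasup}, is the structural heart of the proof.
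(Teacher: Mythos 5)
Your proposal is correct and follows essentially the same route as the paper: compactness by reduction to the isotropic Braides--Dal Maso-type functionals, a $\Gamma$-liminf obtained by gluing a sharp bulk estimate (where \ref{ass-N2} yields the inclusion of an anisotropic tube around the overshoot set into a super-level set of the unmodified convolution, whence the coarea formula gives a slice of controlled perimeter) with a sharp surface estimate by slicing and Theorem~\ref{thm:braides}, combined via Lemma~\ref{lem: lemmasup}, and a $\Gamma$-limsup by density of $\mathcal{W}(\Omega;\R^n)$ plus a direct construction whose surface contribution is computed through the anisotropic Minkowski content of $J_u$. The only point requiring care in your upper bound is that the modification slab around $J_u$ must have width $\gamma_\varepsilon=o(\varepsilon)$ (not merely $\sim\varepsilon$), since otherwise the measure of the set of points $x$ with $S(x,\varepsilon)$ meeting the slab exceeds $\varepsilon\int_{J_u}\phi_\rho(\nu)\,\mathrm{d}\mathcal{H}^{n-1}$ by a nonvanishing multiple of $\varepsilon\mathcal{H}^{n-1}(J_u)$ and the constant $\beta$ is lost; the paper achieves this with a cut-off $u(1-\phi_\varepsilon)$ supported in $K_{\gamma_\varepsilon+\varepsilon}$ with $\gamma_\varepsilon/\varepsilon\to0$, which also spares you any interpolation, as $f\le\beta$ everywhere.
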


The $L^1$-convergence on the whole $\Omega$ can be enforced with the addition of a lower order fidelity term, as we have discussed in Remark \ref{rem: compactness}. This motivates the statement below, {where} we consider a continuous increasing function $\psi:[0,+\infty)\to[0,+\infty)$ such that
\begin{equation}\label{eq: hpsi}
\psi(0)=0,\quad \psi(s+t)\le C(\psi(s)+\psi(t)), \quad \psi(s)\le C(1+s^p), \quad \lim_{s\to+\infty}\frac{\psi(s)}{s}=+\infty
\end{equation} 
and we set for every $A\in\mathcal{A}(\Omega)$
\begin{equation}
G_\varepsilon(u, A)=
\begin{cases}
\displaystyle F_\varepsilon(u, A)+\int_A\psi(|u|)\,\mathrm{d}x, & \mbox{ if }u\in W^{1,p}(A;\R^n)\,,\\
+\infty\,, & \mbox{ otherwise on $L^1(A;\R^n)$. }
\end{cases}
\label{energies1}
\end{equation}
As before, we simply write $G_\varepsilon(u)$ in place of $G_\varepsilon(u, \Omega)$. 
Then we have the following result.

\begin{thm}\label{thm:mainresult2}
Under assumptions \eqref{eq: alfabeta}, {\rm\ref{ass-N1}}, \eqref{eq:coerciv},  and \eqref{eq: hpsi} it holds that
\begin{enumerate}
\begin{item}[{\rm (i)}]
If $(u_\varepsilon)\subset L^p(\Omega;\R^n)$ is such that
$
G_\varepsilon(u_\varepsilon)
\leq C
$ {for every $\varepsilon>0$},
then $(u_\varepsilon)$ is compact in $L^1(\Omega;\R^n)$.
\end{item}
\begin{item}[{\rm (ii)}]
If, in addition, {\rm\ref{ass-N2}} holds, the {functionals} $(G_\varepsilon)$ $\Gamma$\hbox{-}converge, as $\varepsilon\to0$, to the functional
\begin{equation*}
G(u)=
\begin{cases}
\displaystyle F(u) + \!\!\int_\Omega\psi(|u|)\,\mathrm{d}x\,, \!\!& \mbox{ if }u\in GSBD^p(\Omega)\cap L^1(\Omega;\R^n)\,,\\
+\infty\,, & \mbox{ otherwise on $L^1(\Omega;\R^n)$, }
\end{cases}
\end{equation*}
with respect to the $L^1$ convergence in $\Omega$.
\end{item}
\end{enumerate}
\end{thm}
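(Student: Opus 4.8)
The plan is to deduce Theorem~\ref{thm:mainresult2} from Theorem~\ref{thm:mainresult} by a standard perturbation argument, exploiting that the fidelity term $u\mapsto\int_\Omega\psi(|u|)\,\mathrm{d}x$ is continuous along $L^1$-convergent sequences with uniformly bounded energy. The key structural observation is that $G_\varepsilon(u,A)=F_\varepsilon(u,A)+\int_A\psi(|u|)\,\mathrm{d}x$ splits into the nonlocal part, whose $\Gamma$-behaviour is governed by Theorem~\ref{thm:mainresult}, and a continuous perturbation that does not interact with the $\Gamma$-limit procedure.

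\textbf{Step 1: compactness, part (i).} Let $(u_\varepsilon)\subset L^p(\Omega;\R^n)$ satisfy $G_\varepsilon(u_\varepsilon)\le C$. Since both summands in \eqref{energies1} are nonnegative, we have $F_\varepsilon(u_\varepsilon)\le C$ and $\sup_\varepsilon\int_\Omega\psi(|u_\varepsilon|)\,\mathrm{d}x\le C$. Applying Theorem~\ref{thm:mainresult}(i) produces $\overline u_\varepsilon\in GSBV^p(\Omega;\R^n)$ with $\overline u_\varepsilon-u_\varepsilon\to 0$ in measure on $\Omega$ and $\sup_\varepsilon\big(\int_\Omega W(\mathcal E\overline u_\varepsilon)\,\mathrm{d}x+\mathcal H^{n-1}(J_{\overline u_\varepsilon}\cap\Omega)\big)<+\infty$; by the lower bound in \eqref{eq:coerciv} this gives a uniform bound on $\|\mathcal E\overline u_\varepsilon\|_{L^p}+\mathcal H^{n-1}(J_{\overline u_\varepsilon})$. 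Next I would transfer the fidelity bound from $u_\varepsilon$ to $\overline u_\varepsilon$: since $\overline u_\varepsilon-u_\varepsilon\to 0$ in measure and $\psi$ is continuous, one checks (e.g.\ splitting $\Omega$ into the region where $|\overline u_\varepsilon-u_\varepsilon|$ is small and its small-measure complement, and using $\psi(s+t)\le C(\psi(s)+\psi(t))$ together with the fact that a measure-convergent sequence with equi-integrable $\psi$-moments has $\int_\Omega\psi(|\overline u_\varepsilon-u_\varepsilon|)\to 0$) that $\sup_\varepsilon\int_\Omega\psi(|\overline u_\varepsilon|)\,\mathrm{d}x<+\infty$. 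A slightly cleaner route, which I would prefer, is to appeal directly to the $GSBD^p$ compactness theorem (Theorem~\ref{th: GSDBcompactness}) applied to $(\overline u_\varepsilon)$ together with Remark~\ref{rem: compactness}: the uniform bound on $\|\mathcal E\overline u_\varepsilon\|_{L^p}+\mathcal H^{n-1}(J_{\overline u_\varepsilon})$ plus $\sup_\varepsilon\int_\Omega\psi(|\overline u_\varepsilon|)\,\mathrm{d}x<+\infty$ with $\psi$ superlinear forces $A^\infty=\emptyset$ and gives, up to subsequence, $\overline u_\varepsilon\to u$ in $L^1(\Omega;\R^n)$ for some $u\in GSBD^p(\Omega)$. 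Since $\overline u_\varepsilon-u_\varepsilon\to 0$ in measure and (again by equi-integrability of the $\psi$-moments, with $\psi$ superlinear) in $L^1$, we conclude $u_\varepsilon\to u$ in $L^1(\Omega;\R^n)$, proving (i).

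\textbf{Step 2: $\Gamma$-convergence, part (ii).} Assume in addition \ref{ass-N2}. For the $\Gamma$-liminf inequality, let $u_\varepsilon\to u$ in $L^1(\Omega;\R^n)$ with $\liminf_\varepsilon G_\varepsilon(u_\varepsilon)<+\infty$ (otherwise there is nothing to prove); pass to a subsequence realizing the liminf and with $G_\varepsilon(u_\varepsilon)\le C$. By Step~1 (compactness) $u\in GSBD^p(\Omega)\cap L^1(\Omega;\R^n)$, and since $u_\varepsilon\to u$ in $L^1$ the bound $\sup_\varepsilon\int_\Omega\psi(|u_\varepsilon|)\,\mathrm{d}x<+\infty$ with $\psi$ superlinear yields equi-integrability, hence $\int_\Omega\psi(|u_\varepsilon|)\,\mathrm{d}x\to\int_\Omega\psi(|u|)\,\mathrm{d}x$ by Vitali. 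Combining this with the $\Gamma$-liminf inequality of Theorem~\ref{thm:mainresult}(ii) for $F_\varepsilon$,
\[
\liminf_{\varepsilon\to 0}G_\varepsilon(u_\varepsilon)\ge\liminf_{\varepsilon\to 0}F_\varepsilon(u_\varepsilon)+\lim_{\varepsilon\to 0}\int_\Omega\psi(|u_\varepsilon|)\,\mathrm{d}x\ge F(u)+\int_\Omega\psi(|u|)\,\mathrm{d}x=G(u).
\]
For the $\Gamma$-limsup inequality, fix $u\in GSBD^p(\Omega)\cap L^1(\Omega;\R^n)$; I would take a recovery sequence $u_\varepsilon\to u$ for $F_\varepsilon$ from Theorem~\ref{thm:mainresult}(ii), i.e.\ $u_\varepsilon\in W^{1,p}(\Omega;\R^n)$ with $u_\varepsilon\to u$ in $L^1$ and $\limsup_\varepsilon F_\varepsilon(u_\varepsilon)\le \alpha\int_\Omega W(\mathcal Eu)\,\mathrm{d}x+\beta\int_{J_u}\phi_\rho(\nu)\,\mathrm{d}\mathcal H^{n-1}$. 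The only point to verify is that this sequence can be chosen with the additional property $\int_\Omega\psi(|u_\varepsilon|)\,\mathrm{d}x\to\int_\Omega\psi(|u|)\,\mathrm{d}x$; this is guaranteed because the recovery sequence for $F_\varepsilon$ is built (via the density Theorem~\ref{thm: approx} and Remark after it) from the piecewise-smooth approximations $\mathcal W(\Omega;\R^n)$, for which \eqref{eq:densityfidel} in Theorem~\ref{thm: approx} precisely gives $\int_\Omega\psi(|u_j-u|)\,\mathrm{d}x\to 0$, hence (using $\psi(s+t)\le C(\psi(s)+\psi(t))$ and dominated/Vitali convergence) $\int_\Omega\psi(|u_\varepsilon|)\,\mathrm{d}x\to\int_\Omega\psi(|u|)\,\mathrm{d}x$. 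Then $\limsup_\varepsilon G_\varepsilon(u_\varepsilon)\le\limsup_\varepsilon F_\varepsilon(u_\varepsilon)+\lim_\varepsilon\int_\Omega\psi(|u_\varepsilon|)\,\mathrm{d}x\le G(u)$. If $u\notin GSBD^p(\Omega)\cap L^1$ the bound is trivial since $G(u)=+\infty$.

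\textbf{Main obstacle.} The substantive content is entirely inherited from Theorem~\ref{thm:mainresult}; the only genuine care needed is the bookkeeping around the fidelity term—specifically, ensuring that passing to the modified sequences $\overline u_\varepsilon$ in the compactness step does not destroy the $\psi$-moment bound, and that the recovery sequence for $F_\varepsilon$ is compatible with \eqref{eq:densityfidel}. Both are handled by the growth/subadditivity hypotheses in \eqref{eq: hpsi} on $\psi$ together with Vitali's theorem, so I expect no real difficulty, only routine verification. The cleanest presentation is to observe once and for all that, along any $L^1$-convergent sequence with $\sup_\varepsilon\int_\Omega\psi(|u_\varepsilon|)\,\mathrm{d}x<+\infty$, the superlinearity of $\psi$ forces equi-integrability and hence continuity of $u\mapsto\int_\Omega\psi(|u|)\,\mathrm{d}x$, after which both inequalities follow by simply adding the two contributions.
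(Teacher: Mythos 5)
Your proposal follows essentially the same route as the paper: the paper also proves Theorem~\ref{thm:mainresult2} by running the $G_\varepsilon$-versions of the same three ingredients (Proposition~\ref{prop:compactness}(ii) transfers the $\psi$-bound to the modified sequence $\overline u_\varepsilon$ and invokes Theorem~\ref{th: GSDBcompactness} with Remark~\ref{rem: compactness} and Vitali; Propositions~\ref{prop:boundsp}--\ref{prop:lowerbound} add the fidelity term to the liminf via superadditivity of the liminf; Proposition~\ref{prop:upperbound} reuses the recovery construction together with \eqref{eq:densityfidel}). One point of your justification is incorrect as stated, though it does not damage the conclusion: the bound $\sup_\varepsilon\int_\Omega\psi(|u_\varepsilon|)\,\mathrm{d}x<+\infty$ together with superlinearity of $\psi$ gives equi-integrability of $(u_\varepsilon)$, \emph{not} of $(\psi(|u_\varepsilon|))$, so it does not yield $\int_\Omega\psi(|u_\varepsilon|)\,\mathrm{d}x\to\int_\Omega\psi(|u|)\,\mathrm{d}x$ along an arbitrary $L^1$-convergent sequence (the integrals can jump up in the limit). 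For the $\Gamma$-liminf this is harmless, since only $\liminf_\varepsilon\int_\Omega\psi(|u_\varepsilon|)\,\mathrm{d}x\ge\int_\Omega\psi(|u|)\,\mathrm{d}x$ is needed, and that follows from Fatou after extracting an a.e.\ convergent subsequence — which is exactly what the paper does in Proposition~\ref{prop:boundsp}(iii). For the limsup you correctly do not rely on this false continuity but on the explicit recovery construction ($u_\epsilon=u(1-\phi_\epsilon)$ with $|u_\epsilon|\le|u|$, plus \eqref{eq:densityfidel} and the quasi-subadditivity of $\psi$ in the diagonal step), which is the paper's argument as well.
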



\section{Compactness and estimate from below of the $\Gamma$\hbox{-}limit}\label{sec:compactnessestimbelow}

With the following proposition, we prove the compactness statements in Theorem~\ref{thm:mainresult}(i),  and {Theorem~\ref{thm:mainresult2}(i)}, respectively. These results can be easily inferred by a comparison with non-local integral energies whose densities are averages of the gradient on balls with small radii, for which a compactness result has been provided in \cite[Proposition~4.1]{SS21}. In order to do that, we will only require assumption \ref{ass-N1} on the convolution kernel $\rho$.
\begin{prop}\label{prop:compactness}
Let $A\in\mathcal{A}(\Omega)$, and let $F_\varepsilon$, $G_\varepsilon$ be defined as in \eqref{energies0}, and \eqref{energies1}, respectively, where $\rho\in L^\infty(\R^n;[0,+\infty))$ satisfies {\rm\ref{ass-N1}}. Then:
\begin{enumerate}
\begin{item}[{\rm (i)}]
Assume \eqref{eq: alfabeta}, \eqref{eq:coerciv}. If $(u_\varepsilon)\subset L^p(\Omega;\R^n)$ is such that
$
F_\varepsilon(u_\varepsilon, A)
\leq C
$ {for every $\varepsilon>0$},
one can find a sequence $\overline{u}_\varepsilon \in {GSBV^p(A;\R^n)}$ with
\[
\begin{split}
&\overline{u}_\varepsilon-u_\varepsilon \to 0 \mbox{ in measure  on }A \\
&F_\varepsilon(u_\varepsilon, A)\geq c_0 \left(\int_{A} W(\mathcal{E}\overline{u}_\varepsilon)\,\mathrm{d}x + 2 \mathcal{H}^{n-1}(J_{\overline{u}_\varepsilon} \cap A) \right)
\end{split}
\]
{for some $c_0>0$.}
\end{item}
\begin{item}[{\rm (ii)}]
Assume \eqref{eq: alfabeta}, \eqref{eq:coerciv}, and \eqref{eq: hpsi}. If $(u_\varepsilon)\subset L^p(\Omega;\R^n)$ is such that \, \,
$
G_\varepsilon(u_\varepsilon, A)
\leq C
$ {for every $\varepsilon>0$},
then $(u_\varepsilon)$ is compact in $L^1(A;\R^n)$.
\end{item}
\end{enumerate}
\end{prop}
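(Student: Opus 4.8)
\textbf{Proof strategy for Proposition \ref{prop:compactness}.}

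The plan is to reduce both statements to the corresponding compactness result for the ball-averaged non-local functionals in \cite[Proposition~4.1]{SS21} by a direct comparison argument. First I would exploit the geometry of $S=S_\rho$: since $S$ is bounded, open and symmetric, there exist radii $0<r<R$ with $B_r\subset S\subset B_R$, and moreover, because $\rho$ is Riemann integrable and not identically zero on $S$, there exist $\delta>0$ and a smaller symmetric set $S'$ (say $S'=B_{r'}$ for suitable $r'>0$) on which $\rho\ge\delta$ up to a null set. Then for every $w\in W^{1,p}(A)$ and every $x$ with $B_{r'\varepsilon}(x)\subset A$ one has the pointwise bound
\begin{equation*}
\varepsilon\int_A W(\mathcal{E}w(y))\rho_\varepsilon(x-y)\,\mathrm{d}y \ge \varepsilon\,\delta\int_{B_{r'\varepsilon}(x)}W(\mathcal{E}w(y))\,\frac{\mathrm{d}y}{\varepsilon^n} = \delta\,\omega_n (r')^n\,\varepsilon\dashint_{B_{r'\varepsilon}(x)}W(\mathcal{E}w(y))\,\mathrm{d}y\,.
\end{equation*}
Using that $f$ is increasing, and applying Lemma \ref{lem: belowapprox} to bound $f$ from below by a single truncated affine function $\min\{a t,b\}$ with $a,b>0$, we can further bound $f$ of the left-hand side from below by $\min\{a\delta\omega_n(r')^n t,\,b\}$ evaluated at the ball average. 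Up to rescaling the parameter $t\mapsto \delta\omega_n(r')^n t$ inside the truncation, this is exactly (a constant multiple of) the integrand of the ball-averaged functional considered in \cite{SS21}, with a slightly shrunk radius $r'\varepsilon$ in place of $\varepsilon$ and restricted to the inner set $A_\varepsilon:=\{x\in A:\operatorname{dist}(x,\partial A)>r'\varepsilon\}$.

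The main obstacle, and the only genuinely delicate point, is the boundary layer $A\setminus A_\varepsilon$: the comparison above only controls the non-local energy at points whose averaging ball stays inside $A$. I would handle this exactly as is standard for non-local approximations — namely, by covering $A$ with a countable family of open sets $A_k\subset\subset A_{k+1}\subset\subset A$ exhausting $A$, applying the interior comparison on each $A_k$ (for $\varepsilon$ small enough depending on $k$), and then invoking the superadditivity of $A\mapsto F_\varepsilon(u_\varepsilon,A)$ on disjoint open sets together with a diagonal argument over $k$; alternatively one may simply note that the energy bound on $A$ gives the bound on every $A'\subset\subset A$, obtain the $GSBV^p$ approximants $\overline{u}_\varepsilon$ there with the stated estimate, and then pass to the limit in the exhaustion using lower semicontinuity of perimeter and $L^p$ weak lower semicontinuity of $\int W(\mathcal{E}\cdot)$. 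Since the constant $c_0$ produced by \cite[Proposition~4.1]{SS21} on the smaller radius only depends on $a$, $b$, $\delta$, $r'$, $n$ and the coercivity constants in \eqref{eq:coerciv}, it is uniform in $\varepsilon$, which is what is claimed.

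For part (i) this yields directly the sequence $\overline{u}_\varepsilon\in GSBV^p(A;\R^n)$ with $\overline{u}_\varepsilon-u_\varepsilon\to0$ in measure on $A$ and the lower bound $F_\varepsilon(u_\varepsilon,A)\ge c_0(\int_A W(\mathcal{E}\overline{u}_\varepsilon)\,\mathrm{d}x + 2\mathcal{H}^{n-1}(J_{\overline{u}_\varepsilon}\cap A))$, again quoting \cite{SS21}. For part (ii), once the fidelity term $\int_A\psi(|u_\varepsilon|)\,\mathrm{d}x$ is added and $\psi$ satisfies \eqref{eq: hpsi} (in particular $\psi$ is superlinear), I would combine the bound on $\int_A W(\mathcal{E}\overline{u}_\varepsilon)+\mathcal{H}^{n-1}(J_{\overline{u}_\varepsilon}\cap A)$ from part (i) with $\sup_\varepsilon\int_A\psi(|u_\varepsilon|)<+\infty$ and use that $\overline{u}_\varepsilon-u_\varepsilon\to0$ in measure, so that $\sup_\varepsilon\int_A\psi(|\overline{u}_\varepsilon|)<+\infty$ as well; then Theorem \ref{th: GSDBcompactness} together with Remark \ref{rem: compactness} gives that $(\overline{u}_\varepsilon)$ is compact in $L^1(A;\R^n)$, and since $\overline{u}_\varepsilon-u_\varepsilon\to0$ in measure and both families are $\psi$-equi-integrable (hence uniformly integrable by superlinearity of $\psi$), $(u_\varepsilon)$ is compact in $L^1(A;\R^n)$ with the same limit. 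This completes the plan.
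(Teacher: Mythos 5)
Your proposal is correct in substance and follows essentially the same route as the paper: a pointwise comparison of the convolution with a ball average over a small ball $B_{\eta}\subset\subset S$ on which $\rho$ has a positive lower bound, followed by an application of \cite[Proposition~4.1]{SS21}, and, for (ii), Theorem~\ref{th: GSDBcompactness} with Remark~\ref{rem: compactness} plus Vitali. Two remarks on where you diverge. First, the positive lower bound $\rho\ge m_\eta>0$ on $\overline{B_\eta}$ does not come from Riemann integrability and non-triviality alone (that would only give a set of positive measure, not a ball centred at the origin); it comes from \ref{ass-N1}, which prescribes $S=\{\rho\neq0\}$ to be open, together with the lower semicontinuity of $\rho$ after modification on a null set. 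Second, the boundary-layer machinery (exhaustion $A_k\subset\subset A$, superadditivity, diagonalization) is unnecessary: the comparison functional $\widetilde F_\varepsilon$ in the paper is defined with averages over $B_{\eta\varepsilon}(x)\cap\Omega$, which is exactly the form already treated in \cite{SS21}, so the comparison \eqref{eq:comparison} is global and no interior restriction arises. Moreover, your ``alternative'' of producing approximants only on $A'\subset\subset A$ and passing to the limit by lower semicontinuity would not, as stated, deliver what part (i) literally requires, namely a single sequence $\overline u_\varepsilon\in GSBV^p(A;\R^n)$ defined on all of $A$ satisfying the stated inequality for each $\varepsilon$; extending the approximants to $A$ creates extra jump set whose $\mathcal H^{n-1}$ measure must be controlled. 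Finally, in (ii) the bound $\sup_\varepsilon\int_A\psi(|\overline u_\varepsilon|)\,\mathrm{d}x<+\infty$ does not follow from convergence in measure of $\overline u_\varepsilon-u_\varepsilon$ alone; it follows from the explicit construction of $\overline u_\varepsilon$ in \cite[Proposition~4.1(ii)]{SS21}, which is the argument the paper invokes.
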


\proof
Let $\eta\in(0,1)$ be fixed such that $B_\eta(0)\subset\subset S$, and denote by $m_\eta$ the minimum of $\rho$ on $\overline{B_\eta}$, which is strictly positive as we are assuming that $\rho>0$ on $S$. 
Setting $\tilde{f}(t):=f(m_\eta\omega_n\eta^nt)$ and for any $\varepsilon>0$, we consider the energies
\begin{equation}
\widetilde{F}_\varepsilon(u)=
\begin{cases}
\displaystyle\frac{1}{\varepsilon}\int_\Omega \tilde{f}\left(\varepsilon \dashint_{B_{\eta\varepsilon}(x)\cap\Omega}W(\mathcal{E}u(y))\,\mathrm{d}y\right)\,\mathrm{d}x, & \mbox{ if }u\in W^{1,p}(\Omega;\R^n)\,,\\
+\infty\,, & \mbox{ otherwise on $L^1(\Omega;\R^n)$. }
\end{cases}
\label{energies2}
\end{equation}
Since $B_\eta(0)\subseteq S$ and $\rho\geq m_\eta$ on $B_\eta(0)$, a simple computation shows that
\begin{equation}
\widetilde{F}_{\varepsilon}(u , A) \leq F_{\varepsilon}(u , A)
\label{eq:comparison}
\end{equation}
for every $u\in W^{1,p}(\Omega;\R^n)$ and $A\subseteq\Omega$ open set.
By virtue of \eqref{eq:comparison}, to obtain (i) it will suffice to apply the argument of \cite[Proposition~4.1]{SS21} to the sequence $\widetilde{F}_\varepsilon$ in \eqref{energies2}. We then omit the details. 

We now come to (ii). If additionally $
G_\varepsilon(u_\varepsilon, A)
\leq C
$, following the argument for \cite[Proposition~4.1(ii)]{SS21}, it can be shown that the sequence $(\bar{u}_\epsilon)$ constructed in (i) complies with
\begin{equation*}
\int_{A}\psi(|\overline{u}_\varepsilon(x)|)\,\mathrm{d}x + \int_{A}|\mathcal{E}\overline{u}_\varepsilon(x)|^p\,\mathrm{d}x + \mathcal{H}^{n-1}(J_{\overline{u}_\varepsilon}\cap A)\le C<+\infty
\end{equation*}
for all $\e$.
Therefore, in view of the growth assumption \eqref{eq: hpsi} on $\psi$, Theorem \ref{th: GSDBcompactness} and Remark \ref{rem: compactness} apply, and this provides the compactness of the sequence $(\overline{u}_\varepsilon)$ in $L^1(A;\R^n)$. Then, since $\overline{u}_\varepsilon-u_\varepsilon \to 0$ in measure  on $A$, with the Vitali dominated convergence Theorem we infer that $(u_\varepsilon)$ is compact in $L^1(A;\R^n)$ as well. This concludes the proof of (ii).
\endproof

Now, we turn to provide a first estimate of the $\Gamma$-liminf of the functionals $F_\e$. This estimate is optimal, up to a small error, only for the bulk part of the energy, and this is the only very point where we need to require the additional assumptions \ref{ass-N2} on the convolution kernels (see Section~\ref{sec:estimbelowbulk}). The proof of an optimal estimate for the surface term, instead, will be derived separately by means of a slicing argument (see Proposition \ref{prop:lowboundjump} below) for more general kernels complying only with \ref{ass-N1} providing the comparison estimate \eqref{eq:comparison}. As the two parts of the energy are mutually singular, the localization method of Lemma \ref{lem: lemmasup} will eventually allow us to get the $\Gamma$-liminf inequality.

\subsection{Estimate from below of the bulk term}\label{sec:estimbelowbulk}
We begin by giving the announced estimate for the bulk term.

\begin{prop}\label{prop:estimate}
Let $A\in\mathcal{A}(\Omega)$ with $A\subset\subset \Omega$, and consider a sequence $u_\e\in W^{1,p}(\Omega;\R^n)$ converging to $u$ in $L^1(\Omega; \R^n)$.
Assume \eqref{eq: alfabeta} and \eqref{eq:coerciv}, let $\eta\in(0,1)$ be fixed and let $\rho$ comply with (N1)--(N2). 
Suppose that
\begin{equation}
\sup_{\varepsilon>0}F_\varepsilon(u_\varepsilon, A)\leq C\,.
\label{eq:equibounded}
\end{equation}
Then, for every fixed $0<\delta<1$, there exist a constant $M_{\delta,\eta}$ only depending on $f$, $\delta$ and $\eta$, {a constant $\sigma_\eta$ depending on $\rho, \eta$ such that $\sigma_\eta\to0$ as $\eta\to0$}, and a sequence of functions $(v_\varepsilon^{\delta,\eta})\subset {GSBV^p(A;\R^n)}$ such that
\begin{itemize}
\item[{\rm (i)}] $\displaystyle \alpha(1-\sigma_\eta)^2(1-\delta)^{2n+1}\int_{A} W(\mathcal{E}v_\varepsilon^{\delta,\eta}(x))\,\mathrm{d}x\leq F_\varepsilon(u_\e,A)$;
\item[{\rm (ii)}] $\displaystyle \mathcal{H}^{n-1}(J_{v_\varepsilon^{\delta,\eta}})\leq M_{\delta,\eta}\, F_\varepsilon(u_\e,A)$;
\item[(iii)] $\displaystyle v_\varepsilon^{\delta,\eta} \to u$ in $L^1(A; \R^n)$ as $\varepsilon \to 0$.
\end{itemize} 
\end{prop}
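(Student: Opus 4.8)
The strategy is to partition $A$ into small cubes of size comparable to $\varepsilon$, on each of which we decide whether the nonlocal bulk density has ``broken'' an elastic bond or not, and then to modify $u_\varepsilon$ accordingly into a $GSBV^p$ function. First I would fix a small cube side and cover $A$ (up to a negligible set) by cubes $Q_i$ of the form $\varepsilon(z_i+(0,1)^n)$, $z_i\in\Z^n$, and set
\begin{equation*}
g_\varepsilon(x):=\varepsilon\int_\Omega W(\mathcal Eu_\varepsilon(y))\rho_\varepsilon(x-y)\,\mathrm dy\,.
\end{equation*}
The key heuristic is that $g_\varepsilon\ge\beta/\alpha$ (up to the error controlled by $\delta$, via Lemma~\ref{lem: belowapprox}: $\min\{a_i t,b_i\}\le f(t)$ with $a_i\uparrow\alpha$, $b_i\uparrow\beta$) is the threshold past which we should cut. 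Define the ``broken'' set $K_\varepsilon$ as the union of those cubes $Q_i$ on which $g_\varepsilon$ attains values above the threshold somewhere in an $\eta$-dilated neighborhood; using assumption \ref{ass-N2} (the monotonicity $\rho(x)=\varrho(|x|_S)$) one estimates from below the measure of the set where $g_\varepsilon\ge\beta/\alpha$ in an $S$-tubular neighborhood: if one cube is broken then a definite $\varepsilon^n$-fraction of mass must sit nearby, so $\#\{i:Q_i\subset K_\varepsilon\}\cdot\varepsilon^{n-1}$ is controlled. Concretely, from $F_\varepsilon(u_\varepsilon,A)\le C$ and $\frac1\varepsilon\int_{\{g_\varepsilon\ge\beta/\alpha\}}b_i\,\mathrm dx\le F_\varepsilon(u_\varepsilon,A)$ one gets $\mathcal L^n(\{g_\varepsilon\ge\beta/\alpha\})\le C\varepsilon/b_i$, hence $\mathcal L^n(K_\varepsilon)\to0$, and a covering/perimeter bound gives $\mathcal H^{n-1}(\partial K_\varepsilon)\le M_{\delta,\eta}F_\varepsilon(u_\varepsilon,A)$.

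Next I would define $v_\varepsilon^{\delta,\eta}$ by setting it equal to $u_\varepsilon$ on $A\setminus K_\varepsilon$ and to $0$ (or any fixed value, or a mollification) on the interior of $K_\varepsilon$, so that $v_\varepsilon^{\delta,\eta}\in GSBV^p(A;\R^n)$ with $J_{v_\varepsilon^{\delta,\eta}}\subset\partial K_\varepsilon$, which yields (ii). For (iii), since $\mathcal L^n(K_\varepsilon)\to0$ and $u_\varepsilon\to u$ in $L^1$, one has $v_\varepsilon^{\delta,\eta}\to u$ in $L^1(A;\R^n)$ (the values on $K_\varepsilon$ contribute an $L^1$-mass that vanishes, using also that truncation/mollification can be chosen $L^1$-bounded on that shrinking set). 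The delicate part is (i): on $A\setminus K_\varepsilon$ we need $\alpha(1-\sigma_\eta)^2(1-\delta)^{2n+1}\int_{A\setminus K_\varepsilon}W(\mathcal Eu_\varepsilon)\le F_\varepsilon(u_\varepsilon,A)$. Here one uses that on the ``unbroken'' cubes $g_\varepsilon<\beta/\alpha$ in the relevant neighborhood, so $f(g_\varepsilon)\ge a_i g_\varepsilon$ by the truncated-affine lower bound, and then
\begin{equation*}
\frac1\varepsilon\int_A f(g_\varepsilon)\,\mathrm dx\ge\frac{a_i}{\varepsilon}\int_{\{g_\varepsilon<\beta/\alpha\}}g_\varepsilon\,\mathrm dx\ge a_i\int_{\widetilde A}\Big(\int_\Omega W(\mathcal Eu_\varepsilon(y))\rho_\varepsilon(x-y)\,\mathrm dy\Big)\,\mathrm dx
\end{equation*}
for a suitable inner set $\widetilde A$; by Fubini and since $\|\rho\|_1=1$ this reproduces $\int W(\mathcal Eu_\varepsilon)$ up to boundary-layer losses of order $(1-\delta)^{\cdots}$ coming from restricting the $x$-integration away from $\partial A$ and from $K_\varepsilon$, and up to the factor $(1-\sigma_\eta)^2$ accounting for the cube-versus-ball and $\eta$-neighborhood discrepancies, with $\sigma_\eta\to0$ as $\eta\to0$. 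Letting $a_i\uparrow\alpha$ closes the estimate.

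The main obstacle I expect is making the Fubini/exchange argument in (i) rigorous while simultaneously excising $K_\varepsilon$: one must ensure that the portion of the mass $W(\mathcal Eu_\varepsilon)$ carried by points $y$ outside $A\setminus K_\varepsilon$ but seen through $\rho_\varepsilon(x-y)$ from $x\in A\setminus K_\varepsilon$ does not spoil the lower bound --- this is exactly where the definition of $K_\varepsilon$ as an $S$-dilated (rather than $\varepsilon$-dilated) neighborhood of the super-threshold set, combined with the monotonicity \ref{ass-N2} of $\rho$, is essential: it guarantees that if $x$ is unbroken then $\rho_\varepsilon(x-\cdot)$ is supported where the density is genuinely sub-threshold, so the affine lower bound $f(g_\varepsilon(x))\ge a_i g_\varepsilon(x)$ applies with the full integral. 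Carefully bookkeeping the constants $\sigma_\eta$ (from replacing balls $B_{\eta\varepsilon}$ and cube geometry by the norm $|\cdot|_S$) and $(1-\delta)$ factors (from the two inner-set restrictions, one for $\partial A$ and one for $K_\varepsilon$, plus the parameter $i$ in Lemma~\ref{lem: belowapprox}) is routine but must be done so that all error factors tend to $1$ in the right order; the constant $M_{\delta,\eta}$ in (ii) is then simply $\alpha/b_i$ times a dimensional covering constant.
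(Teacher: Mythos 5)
Your outline captures several correct ingredients (the truncated affine minorants $\min\{a_it,b_i\}\le f$, the threshold $b/a$, the smallness of the super-threshold set, the need to propagate a super-threshold value at one point to a set of definite measure nearby via \ref{ass-N2}, and setting $v=0$ on the broken set), and your cube-counting route to the perimeter bound (ii) and to (iii) could be made to work in place of the paper's coarea-formula selection of a good level set of ${\rm dist}_S(\cdot,K_\varepsilon)$. But the proof of (i) as you propose it has a genuine gap. You keep $v_\varepsilon^{\delta,\eta}=u_\varepsilon$ on the good set and try to recover $\int_{A\setminus K_\varepsilon}W(\mathcal Eu_\varepsilon)$ by Fubini from $\int_{\widetilde A}\int W(\mathcal Eu_\varepsilon(y))\rho_\varepsilon(x-y)\,\mathrm dy\,\mathrm dx$. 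This requires $\int_{\widetilde A}\rho_\varepsilon(x-y)\,\mathrm dx\approx 1$ for every $y\in A\setminus K_\varepsilon$, i.e.\ essentially the \emph{whole} support $S(y,\varepsilon)$ of $\rho_\varepsilon(\cdot-y)$ must lie in the sub-threshold set. That forces $K_\varepsilon$ to contain the full $\varepsilon\,{\rm diam}_S$-neighborhood of $\{g_\varepsilon\ge b/a\}$; but the only available bound is $\mathcal L^n(\{g_\varepsilon\ge b/a\})\le C\varepsilon/b$, and the $\varepsilon$-neighborhood of a set of measure $O(\varepsilon)$ need not be small (the set can be scattered), so $\mathcal L^n(K_\varepsilon)\to0$ — hence (iii) and the cube count for (ii) — would fail. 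Conversely, if you only fatten by $\delta\eta\varepsilon$ (which is what the \ref{ass-N2}-comparison actually controls), then for $y$ near $\partial K_\varepsilon$ the mass $\int_{\widetilde A}\rho_\varepsilon(x-y)\,\mathrm dx$ is not close to $1$ and, since $W(\mathcal Eu_\varepsilon)$ has no a priori bound there, the constant in (i) is destroyed. The two requirements are incompatible.

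The paper avoids this entirely: it never estimates $\int W(\mathcal Eu_\varepsilon)$. Instead $v_\varepsilon^{\delta,\eta}$ is defined as the convolution $\int u_\varepsilon(y)\rho^\eta_{(1-\delta)\varepsilon}(y-x)\,\mathrm dy$ with a kernel truncated near the origin (on $\eta S$, renormalized by $1-\sigma_\eta$) and rescaled to $(1-\delta)\varepsilon$, and then the convexity of $W$ (Jensen's inequality, using $\|\rho^\eta\|_1=1$) gives the \emph{pointwise} bound $W(\mathcal Ev_\varepsilon^{\delta,\eta}(x))\le\frac1\varepsilon\psi^{\eta,\delta}_\varepsilon(x)$ on the good set, where $\psi^{\eta,\delta}_\varepsilon$ is sub-threshold so that $f$ may be replaced by its affine minorant; no Fubini or mass recovery is needed, at the price of the factors $(1-\sigma_\eta)^2(1-\delta)^{2n}$. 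Your proposal never invokes the convexity of $W$ nor the mollified competitor, which are the essential devices here. Relatedly, your appeal to \ref{ass-N2} is too vague: the monotonicity alone gives $\rho_\varepsilon(y-x)\ge\rho_\varepsilon(y-z)$ only for half of the $y$'s; the working comparison is $\rho_\varepsilon(y-x)\ge(1-\sigma_\eta)\rho^\eta_{(1-\delta)\varepsilon}(y-z)$ for $|x-z|_S\le\delta\eta\varepsilon$ and \emph{all} $y$, and it is precisely the truncation of $\rho^\eta$ on $\eta S$ that makes this hold when $y$ is close to $z$. You should rebuild the argument around the truncated rescaled kernel and Jensen's inequality.
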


\proof
We first consider the case $f(t)=\min\{at,b\}$, with $a,b>0$. For given $\eta$, we introduce the truncated kernel $\rho^\eta(x):=\frac{1}{1-\sigma_\eta}\rho(x)(1-\chi_{\eta S}(x))$, where the constant $\sigma_\eta$ is given by
\begin{equation*}
\sigma_\eta:=\int_{\eta S} \rho(x)\,\mathrm{d}x\,,
\end{equation*}
and $\sigma_\eta\to0$ as $\eta\to0$.  Notice that with this choice of $\sigma_\eta$ one has $\int_{\R^n}\rho^\eta(x)\,\mathrm{d}x=1$.

For fixed $\delta\in(0,1)$, we then define
\begin{equation}\label{cdeltaeta}
C_{\delta,\eta}:=\frac{1}{(1-\delta)^n(1-\sigma_\eta)}\,,
\end{equation}
and the functions
\begin{equation*}
\begin{split}
\psi_\varepsilon^{\eta,\delta}(x)  & := \varepsilon \int_{\Omega}W(\mathcal{E} u_\varepsilon(y))\rho^\eta_{\varepsilon(1-\delta)}(y-x)\,\mathrm{d}y \\
\psi_\varepsilon(x) & := \varepsilon \int_{\Omega}W(\mathcal{E} u_\varepsilon(y))\rho_{\varepsilon}(y-x)\,\mathrm{d}y\,.
\end{split}
\end{equation*}
Observe that, since $W\ge 0$, by the definition of $\rho^\eta$ and assumption (N2), we get
\begin{equation}\label{eq:elementary}
\psi_\varepsilon^{\eta,\delta}(x) \le C_{\delta,\eta} \psi_\varepsilon(x) 
\end{equation}
for all $x\in A$. Define now the following sets, depending on $\delta,\eta$ and $S$:
\begin{equation}
K_\varepsilon:=\left\{x\in A:\,\, \psi_\varepsilon^{\eta,\delta}(x) \geq {C_{\delta,\eta}}\, \frac{b}{a}\right\}\,,
\label{eq:keps}
\end{equation}
\begin{equation}
K^{\prime }_\varepsilon:=\bigl\{x\in A:\,\, {\rm dist}_S(x,K_\varepsilon)\le \delta\eta\epsilon\bigr\}\,.
\label{eq:keps2}
\end{equation}
We prove the inclusion
\begin{equation}
K^{\prime}_\varepsilon \subseteq \left\{x\in A:\,\, \psi_\varepsilon(x)\geq \, \frac{b}{a}\right\}\,.
\label{eq:inclusion}
\end{equation}
For this, if $x\in K^{\prime}_\varepsilon$ then there exists $z\in K_\varepsilon$ such that $|x-z|_S\leq \delta\eta\varepsilon$. Now, by the triangle inequality, for every $y\in\Omega$ it holds that
\begin{equation*}
\frac{|x-y|_S}{\varepsilon}\leq \delta\eta + \frac{|z-y|_S}{\varepsilon}\,,
\end{equation*}
whence
\begin{equation*}
\frac{|x-y|_S}{\varepsilon}\leq  \frac{|z-y|_S}{(1-\delta)\varepsilon}
\end{equation*}
if and only if ${|z-y|_S}\geq (1-\delta)\eta\varepsilon$. In this case, since $\rho$ is non-increasing with respect to $|\cdot|_S$, we have
\begin{equation*}
\rho\left(\frac{y-x}{\varepsilon}\right) \geq (1-\sigma_\eta) \rho^\eta\left(\frac{y-z}{(1-\delta)\varepsilon}\right)\,.
\end{equation*}
Notice that this inequality holds true also if $|z-y|_S<(1-\delta)\eta\varepsilon$. In this case, indeed, one has
$\frac{y-z}{(1-\delta)\varepsilon}\in \eta S$ and hence $\rho^\eta\left(\frac{y-z}{(1-\delta)\varepsilon}\right)=0$ by definition of $\rho^\eta$. Rescaling the kernels and using \eqref{cdeltaeta} we get 
\[
\rho^\eta_{\varepsilon(1-\delta)}(y-z)\le C_{\delta,\eta} \rho_{\varepsilon}(y-x)\,,
\] 
so that
\begin{equation*}
\psi_\varepsilon(x)\geq \frac{\psi_\varepsilon^{\eta,\delta}(z)}{C_{\delta,\eta}}\geq \frac{b}{a}
\end{equation*}
and the proof of \eqref{eq:inclusion} is concluded.

Now, from the inclusion \eqref{eq:inclusion} and the fact that ${f}(t)= b$ for $t\geq \frac{b}{a}$, we deduce that
\begin{equation}
\mathcal{L}^n(K^{\prime}_\varepsilon)\leq \frac{\varepsilon}{b} {F}_\varepsilon(u_\varepsilon, A)\,.
\label{stima1}
\end{equation}
Applying the coarea formula (see for instance \cite[Theorem 3.14]{EvansGariepy92}) to the $1$-Lipschitz function \, $g(x):= \mathrm{dist}_S(x, K_\epsilon)$ in the open set $\{0<g(x)<\eta\delta \epsilon\}\subset K^{\prime}_\varepsilon$ we get
\[
 \frac{\varepsilon}{b} F_\varepsilon(u_\varepsilon, A) \geq \mathcal{L}^n(K^{\prime}_\varepsilon)\ge \int_{0}^ {\eta\delta \epsilon}\mathcal{H}^{n-1}(\{g=t\})\,\mathrm{d}t\,. 
\]
It follows that 
we can choose $0<\delta^\prime_\e < \eta\delta\e$ such that, for 
\begin{equation}
K^{\prime\prime}_\varepsilon:=\{x\in A:\,\, {\rm dist}_S(x,K_\varepsilon)\le \delta^\prime_\e\}\,,
\label{eq:kpeps}
\end{equation}
it holds
\begin{equation}
\mathcal{H}^{n-1}(\partial K^{\prime\prime}_\varepsilon)=\mathcal{H}^{n-1}(\{x\in A:\,\,{\rm dist}_S(x,K_\varepsilon)= \delta^\prime_\e\})\leq \frac{1}{\eta\delta b}{F}_\varepsilon(u_\varepsilon,  A)\,.
\label{eq:boundmennucci}
\end{equation}

We define a sequence $(v_\varepsilon^{\delta,\eta})$ of functions in ${GSBV^p(A;\R^n)}$ as
\begin{equation}
v_\varepsilon^{\delta,\eta}(x):=
\begin{cases}
\displaystyle\int_{\Omega} u_\varepsilon(y)\rho^\eta_{(1-\delta)\varepsilon}(y-x)\,\mathrm{d}y & \mbox{ if }x\in A\backslash K^{\prime\prime}_\varepsilon\,,\\
0 & \mbox{ otherwise. }
\end{cases}
\label{eq:vdelta}
\end{equation}
Since $\|\rho^\eta\|_1=1$, by Proposition~\ref{prop:convolutions}(ii) (applied for $\theta_\varepsilon=\varepsilon(1-\delta)$) and the fact that, by construction and  \eqref{stima1}, it holds $\mathcal L^n(K^{\prime\prime}_\varepsilon)\to 0$ when $\e \to 0$, we have that $\displaystyle v_\varepsilon^{\delta,\eta} \to u$ in $L^1(A; \R^n)$ as $\varepsilon \to 0$. We also have $\mathcal{H}^{n-1}(J_{v_\varepsilon^{\delta,\eta}})\le \mathcal{H}^{n-1}(\partial K^{\prime\prime}_\varepsilon)$, so that with \eqref{eq:boundmennucci} we deduce (ii) for $M_{\delta,\eta}=\frac1{\eta\delta b}$.

Now, since $K_\e \subset K^{\prime\prime}_\e$ and $A\subset\subset \Omega$, it holds
$
\psi_\varepsilon^{\eta,\delta}(x)< {C_{\delta,\eta}}\frac ba
$
for all $x\in  K^{\prime\prime}_\e$. As $f(t)=\min\{at, b\}$, this gives
\begin{equation}\label{eq: banale}
f(\psi_\varepsilon^{\eta,\delta}(x))\geq \frac{a}{C_{\delta,\eta}} \psi_\varepsilon^{\eta,\delta}(x)
\end{equation}
for all $x \in A \setminus K^{\prime\prime}_\e$. Now, since the function $f$ is concave and $f(0)=0$, it holds $f(\lambda t) \geq \lambda f(t)$ for all $\lambda\in[0,1]$. Combining with the monotonicity of $f$ and \eqref{eq:elementary}, we have
\begin{equation}
f\left(\psi_\varepsilon(x)\right)\ge \frac{1}{C_{\delta,\eta}} f\left(\psi_\varepsilon^{\eta,\delta}(x)\right)
\end{equation}
for all $x \in A$. With this, using \eqref{eq: banale}, the Jensen's inequality, 
\eqref{commut}, \eqref{eq:vdelta}, and since $W(\mathbf 0)=0$, we get
\begin{equation*}
\begin{split}
F_\varepsilon(u_\varepsilon,A) & \geq \frac{1}{\varepsilon}\int_{A\backslash K^{\prime\prime}_\varepsilon}f\left(\psi_\varepsilon(x)\right)\,\mathrm{d}x\\
& \geq \frac{1}{\varepsilon C_{\delta,\eta}}\int_{A\backslash K^{\prime\prime}_\varepsilon}f\left(\psi_\varepsilon^{\eta,\delta}(x)\right)\,\mathrm{d}x \geq \frac{a}{\varepsilon C_{\delta,\eta}^2}\int_{A\backslash K^{\prime\prime}_\varepsilon}\psi_\varepsilon^{\eta,\delta}(x)\,\mathrm{d}x\\
& \geq \frac{a}{C_{\delta,\eta}^2}\int_{A\backslash K^{\prime\prime}_\varepsilon}W\left(\int_{\Omega}\mathcal{E} u_\varepsilon(y)\rho^\eta_{(1-\delta)\varepsilon}(y-x)\,\mathrm{d}y\right)\,\mathrm{d}x\\
& = \frac{a}{C_{\delta,\eta}^2}\int_{A\backslash K^{\prime\prime}_\varepsilon}W(\mathcal{E}v_\epsilon^{\delta,\eta}(x))\,\mathrm{d}x \\
&= a(1-\sigma_\eta)^2(1-\delta)^{2n}\int_{A}W(\mathcal{E}v_\epsilon^{\delta,\eta}(x))\,\mathrm{d}x\,.
\end{split}
\end{equation*}
For a general $f$ complying with \eqref{eq: alfabeta}, use Lemma \ref{lem: belowapprox} to find $a_\delta, b_\delta>0$ with $a_\delta\ge \alpha(1-\delta)$ and $f(t) \ge \min\{a_\delta t, b_\delta\}$ for all $t \in \R$, and perform the same construction as in the previous step. This gives (iii), (ii) (with $M_\delta:=\frac1{\delta\eta b_\delta}$) and
\[
\begin{split}
F_\varepsilon(u_\varepsilon,A) & \geq a_\delta(1-\sigma_\eta)^2(1-\delta)^{2n}\int_{A}W(\mathcal{E}v_\epsilon^{\delta,\eta}(x))\,\mathrm{d}x \\
& \geq \alpha(1-\sigma_\eta)^2(1-\delta)^{2n+1}\int_{A}W(\mathcal{E}v_\epsilon^{\delta,\eta}(x))\,\mathrm{d}x\,,
\end{split}
\]
that is (i).
\endproof

\subsection{Estimate from below of the surface term}\label{sec:estimbelowsurf}

In this section we derive by slicing a lower bound for the surface term in the energy. It is worth mentioning that, by virtue of \eqref{eq:Gxi}, the desired estimate could be probably also obtained by adapting to the $GSBD$-setting the semi-discrete approach of \cite[Proposition~6.4]{Negri2006}. Nonetheless, that argument is quite delicate for our purposes, and more complicated than we need. It indeed aimed to provide an optimal lower bound for both the bulk and the surface terms in a unique proof by means of a slicing procedure. In our case, the general form of the bulk energy we are considering does not comply with slicing arguments.  Therefore, on the one hand, the two terms have to be estimated separately. On the other hand, an independent and simpler strategy can be followed to provide a lower bound with optimal constant in front of the surface energy. 

We set
\begin{equation}
\tau_\xi :=\mathcal{H}^1(\{x\in S:\,\, x=t\xi \mbox{ for }t\in\R\})\,,
\label{eq:tauxi}
\end{equation}
for every $\xi\in\mathbb{S}^{n-1}$.

\begin{prop}\label{prop:lowboundjump} 
Let $\rho\in L^\infty(\R^n;[0,+\infty))$ be a convolution kernel complying with {\rm\ref{ass-N1}}, and let $F_\varepsilon$ be defined as in \eqref{energies0}. Assume \eqref{eq: alfabeta} and \eqref{eq:coerciv}. Let $\delta\in(0,1)$ be fixed, and consider a sequence $\e_j \to 0$. Let $A\in\mathcal{A}(\Omega)$ and $u_j\in W^{1,p}(A;\R^n)$ converging to $u$ in $L^1(A; \R^n)$. Assume that 
\[
\mathop{\lim\inf}_{j\to+\infty}F_{\varepsilon_j}(u_j,A)<+\infty\,.
\]
Then $u \in GSBD^p(A)$ and 
\begin{equation}
\displaystyle\mathop{\lim\inf}_{j\to+\infty}F_{\varepsilon_j}(u_j,A)\geq \beta(1-\delta)\int_{J_{u}^\xi\cap A}\tau_\xi|\langle \nu,\xi\rangle|\,\mathrm{d}\mathcal{H}^{n-1}
\label{eq:lowboundjump}
\end{equation}
for every $\xi\in \mathbb{S}^{n-1}$.
\end{prop}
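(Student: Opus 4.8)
The plan is to reduce the $n$-dimensional estimate to the one-dimensional $\Gamma$-convergence result (Theorem~\ref{thm:braides}) by slicing along the fixed direction $\xi$, and then integrate over the orthogonal hyperplane using Fatou's lemma. First I would fix $\xi\in\mathbb S^{n-1}$ and, for a slicing point $y\in\Pi^\xi$, introduce the one-dimensional sections $u_j^{\xi,y}(t)=\langle u_j(y+t\xi),\xi\rangle$ of the displacements. The key observation is that, by Fubini's theorem, the nonlocal energy $F_{\e_j}(u_j,A)$ can be bounded from below by an integral over $\Pi^\xi$ of one-dimensional nonlocal functionals acting on suitable averages of these sections. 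Concretely, using that $W(\mathcal Eu_j(y))\ge c|\mathcal Eu_j(y)|^p\ge c|\langle\mathcal Eu_j(y)\xi,\xi\rangle|^p$ by \eqref{eq:coerciv}, and recalling property (a) of $GBD$ functions, i.e.\ $\dot u_j^{\xi,y}(t)=\langle\mathcal Eu_j(y+t\xi)\xi,\xi\rangle$, one controls the convolution against $\rho_{\e_j}$ restricted to the segment $S\cap\Xi$ (whose length is exactly $\tau_\xi$ by \eqref{eq:tauxi}). Projecting the kernel in the directions orthogonal to $\xi$ and discarding a fraction $\delta$ of the mass near the endpoints (which is where the factor $(1-\delta)$ comes from) produces, after a change of variables aligning the effective support with an interval of half-length $\e_j$, a lower bound by functionals of the form $H_{\e_j}$ from Theorem~\ref{thm:braides}, applied to the averaged sections $\hat u_j^{\xi,y}$ of Lemma~\ref{lem:lbp}, with constant $\beta(1-\delta)\tau_\xi$ and weight $|\langle\nu,\xi\rangle|$ entering through the $p$-growth lower bound.

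Next, by Lemma~\ref{lem:lbp}, for a.e.\ $y\in\Pi^\xi$ the averaged sections $\hat u_j^{\xi,y}$ converge in $L^1$ to $u^{\xi,y}$. Passing to a subsequence realizing the liminf of $F_{\e_j}(u_j,A)$ and applying Fatou's lemma on $\Pi^\xi$, together with the $\Gamma$-liminf inequality of Theorem~\ref{thm:braides} on each slice, I would get
\[
\mathop{\lim\inf}_{j\to+\infty}F_{\e_j}(u_j,A)\ \ge\ \beta(1-\delta)\,\tau_\xi\int_{A_\xi}\#\big(J_{u^{\xi,y}}\cap A_{\xi,y}\big)\,\mathrm{d}\mathcal H^{n-1}(y)\,,
\]
and in particular each slice $u^{\xi,y}$ lies in $SBV(A_{\xi,y})$ with $\mathcal Eu$ controlled in $L^p$; this is precisely the structure needed to invoke the slicing characterization (property (b) of $GBD$ and the $GSBD^p$ criterion, cf.\ \cite{DM2013}) to conclude $u\in GSBD^p(A)$. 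Finally, using property (b), namely $J_{u^{\xi,y}}=(J_u^\xi)_{\xi,y}$ for $\mathcal H^{n-1}$-a.e.\ $y$, together with the integral–geometric (coarea-type) formula expressing $\int_{A_\xi}\#((J_u^\xi)_{\xi,y})\,\mathrm{d}\mathcal H^{n-1}(y)=\int_{J_u^\xi\cap A}|\langle\nu,\xi\rangle|\,\mathrm{d}\mathcal H^{n-1}$, one rewrites the right-hand side as $\beta(1-\delta)\int_{J_u^\xi\cap A}\tau_\xi|\langle\nu,\xi\rangle|\,\mathrm{d}\mathcal H^{n-1}$, which is \eqref{eq:lowboundjump}.

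The main obstacle I anticipate is the bookkeeping in the first step: carefully extracting, from the genuinely $n$-dimensional convolution $\int_\Omega W(\mathcal Eu_j(y))\rho_{\e_j}(x-y)\,\mathrm{d}y$, a clean lower bound by a one-dimensional convolution of $|\dot u_j^{\xi,y}|^p$ against a rescaled one-dimensional kernel, uniformly controlling the effect of the transverse averaging and the loss near the endpoints of the segment $S\cap\Xi$. This requires a judicious Fubini decomposition of $S$ into the product-like structure (segment in direction $\xi$) $\times$ (section in $\Pi^\xi$), a monotone rearrangement of the one-dimensional kernel to make it comparable to the flat kernel $\tfrac12\chi_{(-1,1)}$ appearing in $H_\e$ (here the convexity of $S$ is used, as remarked in the introduction, to guarantee that the one-dimensional sections of the scaled support are intervals), and the elementary but delicate observation that $f$ being nondecreasing lets us replace the true integrand by the smaller, flattened one before applying Theorem~\ref{thm:braides}. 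The convergence hypothesis needed to run Theorem~\ref{thm:braides} on the slices is supplied by Lemma~\ref{lem:lbp}, and no additional regularity of $\rho$ beyond \ref{ass-N1} is needed, consistently with the statement.
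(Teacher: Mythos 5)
Your proposal follows essentially the same route as the paper's proof: coercivity to reduce to $|\langle\mathcal Eu_j\,\xi,\xi\rangle|^p$, an inscribed cylinder of axial half-length $\tfrac{\tau_\xi}{2}(1-\delta)$ inside the convex set $S$ (bounding $\rho_{\e_j}$ below by its minimum there), Fubini and Jensen to pass to the averaged sections of Lemma~\ref{lem:lbp}, the one-dimensional result of Theorem~\ref{thm:braides} on each slice, and Fatou plus the area formula to conclude. The only (minor) divergence is that the paper obtains $u\in GSBD^p(A)$ directly from Proposition~\ref{prop:compactness} together with Theorem~\ref{th: GSDBcompactness}, rather than from a slicing characterization of $GSBD^p$, which is the cleaner way to justify writing $J_u^\xi$ and $\nu_u$ in \eqref{eq:lowboundjump} in the first place.
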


\proof
It follows from Proposition \ref{prop:compactness} and Theorem \ref{th: GSDBcompactness} that $u \in GSBD^p(A)$.
To prove \eqref{eq:lowboundjump}, we first note that, by virtue of the growth assumption \eqref{eq:coerciv}, we have
\begin{equation*}
W(\mathcal{E}u)\geq c |\mathcal{E}u|^p\geq c |\langle (\mathcal{E}u)\xi,\xi\rangle|^p\,,
\end{equation*}
for every $\xi\in \mathbb{S}^{n-1}$. 
Thus, for every fixed $\xi$, since $f$ is non-decreasing, it will be sufficient to provide a lower estimate for the energies
\begin{equation}
F_{\varepsilon_j}^\xi(u_j,A) := \displaystyle\frac{1}{\varepsilon_j}\int_{A}f\left({c}\varepsilon_j \int_{S_{{\varepsilon_j}}(x)}|\langle (\mathcal{E}u_j(z))\xi,\xi\rangle|^p\rho_{\varepsilon_j}(z-x)\,\mathrm{d}z\right)\,\mathrm{d}x\,.
\label{eq:Gxi}
\end{equation}
We proceed by a slicing argument, and for each $x\in A$ we denote by $x_\xi$ and $y_\xi$ the projections of $x$ onto $\Xi$ and $\Pi^\xi$, respectively. Since $S$ is open and convex, for every fixed $\xi\in \mathbb{S}^{n-1}$ we can find a radius $r=r(\delta,S)>0$ such that the cylinder
\begin{equation}
C_{(1-\delta),r}^\xi=\left(-\lambda_{\xi,\delta},\lambda_{\xi,\delta}\right)\times B^{n-1}_{r}(0) \subset\subset S\,,
\end{equation}
where $\lambda_{\xi,\delta}:=\frac{\tau_\xi}{2}(1-\delta)$ and $\tau_\xi$ is the length of the section ${S_\xi}$. Indeed, since $S$ is open, some $\eta>0$ can be found such that $\overline{B_{\eta}(0)}$ is contained in $S$. Now, if $t=(1-\delta)s$ for some $s\in {S_\xi}$ and $y\in\xi^\perp$ with $|y|\leq\eta$, then $t\xi+\delta y\in S$ from the convexity of $S$. Thus, it will suffice to choose $r:=\delta\eta$. 

If we denote by $m_C$ the minimum of $\rho$ on $\overline{C_{(1-\delta),r}^\xi}$, we then have
\begin{equation} 
\begin{split}
& F_{\varepsilon_j}^\xi(u_j,A)\\
 & =\displaystyle\int_{\Pi^\xi}\mathrm{d}\mathcal{H}^{n-1}(y_\xi)\left(\frac{1}{\varepsilon_j}\int_{A_{\xi,y_\xi}}f\left({c}\varepsilon_j \int_{S_{{\varepsilon_j}}(x)}|\langle (\mathcal{E}u_j(z))\xi,\xi\rangle|^p\rho_{\varepsilon_j}(z-x)\,\mathrm{d}z\right)\,\mathrm{d}x_\xi\right) \\
 & \geq\displaystyle\int_{\Pi^\xi}\mathrm{d}\mathcal{H}^{n-1}(y_\xi)\left(\frac{1}{\varepsilon_j}\int_{A_{\xi,y_\xi}}\tilde{f}\left(\frac{1}{\varepsilon_j^{n-1}}\int_{C_{(1-\delta){\varepsilon_j},r\varepsilon_j}^\xi(x)}|\langle (\mathcal{E}u_j(z))\xi,\xi\rangle|^p\,\mathrm{d}z\right)\,\mathrm{d}x_\xi\right)\,,
\end{split}
\label{stimone0}
\end{equation}
where $\tilde{f}(t):=f(c\,m_C t)$. Note that $\tilde{f}(t)\to\beta$ as $t\to+\infty$.

We now set
\begin{equation*}
F_{\varepsilon_j}^{\xi,y_\xi}(u_j,A_{\xi,y_\xi}):= \frac{1}{\varepsilon_j}\int_{A_{\xi,y_\xi}}\tilde{f}\left(\frac{1}{\varepsilon_j^{n-1}}\int_{C_{(1-\delta){\varepsilon_j},r\varepsilon_j}^\xi(x)}|\langle (\mathcal{E}u_j(z))\xi,\xi\rangle|^p\,\mathrm{d}z\right)\,\mathrm{d}x_\xi\,.
\end{equation*}
We denote (with a slight abuse of notation) still with $z$ the $(n-1)$-dimensional variable in $B^{n-1}_{r\epsilon_j}(y_\xi)$. Set 
${w}_j^{\xi,y_\xi}(t):=\dashint_{B^{n-1}_{r\epsilon_j}(y_\xi)}\langle u_j(z+t\xi)),\xi\rangle\,\mathrm{d}z$.

By virtue of Lemma~\ref{lem:lbp}(ii), applied with $\theta_{\varepsilon_j}=r\varepsilon_j$, we have that ${w}_j^{\xi,y_\xi}$ converges to $u^{\xi,y_\xi}$ in $L^1(A_{\xi,y_\xi})$ for a.e. $y_\xi$. Furthermore, setting $g(t):=\tilde{f}(\omega_{n-1}r^{n-1} t)$, Fubini's Theorem, Jensen's inequality and the monotonicity of $\tilde{f}$ entail that
\begin{align}\label{stimon}
\begin{split}
&F_{\varepsilon_j}^{\xi,y_\xi}(u_j,A_{\xi,y_\xi})\\
&= \frac{1}{\varepsilon_j}\int_{A_{\xi,y_\xi}}\tilde{f}\left(\frac{1}{\varepsilon_j^{n-1}} \int_{B^{n-1}_{r\epsilon_j}(y_\xi)}\mathrm{d}z\int_{x_\xi-\lambda_{\xi,\delta}\varepsilon_j}^{x_\xi+\lambda_{\xi,\delta}\varepsilon_j}|\langle (\mathcal{E}u_j(z+t\xi))\xi,\xi\rangle|^p\,\mathrm{d}t\right)\,\mathrm{d}x_\xi\\
&= \displaystyle\frac{1}{\varepsilon_j}\int_{A_{\xi,y_\xi}}\tilde{f}\left(\frac{1}{\varepsilon_j^{n-1}} \int_{x_\xi-\lambda_{\xi,\delta}\varepsilon_j}^{x_\xi+\lambda_{\xi,\delta}\varepsilon_j}\left(\int_{B^{n-1}_{r\epsilon_j}(y_\xi)}|\langle (\mathcal{E}u_j(z+t\xi))\xi,\xi\rangle|^p\,\mathrm{d}z\right)\mathrm{d}t\right)\,\mathrm{d}x_\xi \\
& \geq \displaystyle\frac{1}{\varepsilon_j}\int_{A_{\xi,y_\xi}}\tilde{f}\left(\omega_{n-1}r^{n-1}\int_{x_\xi-\lambda_{\xi,\delta}\varepsilon_j}^{x_\xi+\lambda_{\xi,\delta}\varepsilon_j}\left(\dashint_{B^{n-1}_{r\epsilon_j}(y_\xi)}\langle (\mathcal{E}u_j(z+t\xi))\xi,\xi\rangle\,\mathrm{d}z\right)^p\mathrm{d}t\right)\,\mathrm{d}x_\xi\\
&= \displaystyle\frac{1}{\varepsilon_j}\int_{A_{\xi,y_\xi}}g\left(\int_{x_\xi-\lambda_{\xi,\delta}\varepsilon_j}^{x_\xi+\lambda_{\xi,\delta}\varepsilon_j}|\dot{w}_j^{\xi,y_\xi}(t)|^p\,\mathrm{d}t\right)\,\mathrm{d}x_\xi\\
&= \lambda_{\xi,\delta}\displaystyle\frac{1}{\lambda_{\xi,\delta}\varepsilon_j}\int_{A_{\xi,y_\xi}}g\left(\int_{x_\xi-\lambda_{\xi,\delta}\varepsilon_j}^{x_\xi+\lambda_{\xi,\delta}\varepsilon_j}|\dot{w}_j^{\xi,y_\xi}(t)|^p\,\mathrm{d}t\right)\,\mathrm{d}x_\xi\,,
\end{split}
\end{align}

Now, for  the function $t\mapsto g(t)$ it still holds  $g(t)\to \beta$ when $t\to +\infty$. Hence, applying Theorem~\ref{thm:braides} 
to the one-dimensional energies
\begin{equation*}
\widetilde{F}_{\epsilon_j}^{\xi,y_\xi}({w}_j^{\xi,y_\xi}, A_{\xi,y_\xi}):=\displaystyle\frac{1}{\lambda_{\xi,\delta}\varepsilon_j}\int_{A_{\xi,y_\xi}}g\left(\int_{x_\xi-\lambda_{\xi,\delta}\varepsilon_j}^{x_\xi+\lambda_{\xi,\delta}\varepsilon_j}|\dot{w}_j^{\xi,y_\xi}(t)|^p\,\mathrm{d}t\right)\,\mathrm{d}x_\xi
\end{equation*}
we obtain the lower bound
\begin{equation}
\mathop{\lim\inf}_{j\to+\infty} \widetilde{F}_{\epsilon_j}^{\xi,y_\xi}({w}_j^{\xi,y_\xi}, A_{\xi,y_\xi}) \geq  2\beta\#(J_{u^{\xi,y_\xi}}\cap A_{\xi,y_\xi})\,.
\label{stima1D}
\end{equation}
Therefore, using \eqref{stimon} and \eqref{stima1D} we deduce
\begin{equation*}
\begin{split}
\mathop{\lim\inf}_{j\to+\infty} F_{\varepsilon_j}^{\xi,y_\xi}(u_j, A_{\xi,y_\xi})&\geq \lambda_{\xi,\delta} \mathop{\lim\inf}_{j\to+\infty} \widetilde{F}_{\epsilon_j}^{\xi,y_\xi}({w}_j^{\xi,y_\xi}, A_{\xi,y_\xi})\\
& \geq \beta\tau_\xi(1-\delta)\#(J_{u^{\xi,y_\xi}}\cap A_{\xi,y_\xi})\,.
\end{split}
\end{equation*}
With \eqref{stimone0} and Fatou's Lemma we finally have
\begin{equation*}
\begin{split}
\displaystyle\mathop{\lim\inf}_{j\to+\infty}F_{\varepsilon_j}(u_j,A)&\geq \mathop{\lim\inf}_{j\to+\infty}\displaystyle\int_{\Pi^\xi}F_{\varepsilon_j}^{\xi,y_\xi}(u_j, A_{\xi,y_\xi})\,\mathrm{d}\mathcal{H}^{d-1}(y_\xi) \\
& \geq \displaystyle\int_{\Pi^\xi} \left(\mathop{\lim\inf}_{j\to+\infty} F_{\varepsilon_j}^{\xi,y_\xi}(u_j, A_{\xi,y_\xi})\right)\, \mathrm{d}\mathcal{H}^{d-1}(y_\xi) \\
& \geq \beta\tau_\xi(1-\delta)\displaystyle\int_{\Pi^\xi} \#(J_{u^{\xi,y_\xi}}\cap A_{\xi,y_\xi})\, \mathrm{d}\mathcal{H}^{d-1}(y_\xi) \\
&=\beta\tau_\xi(1-\delta)\int_{J_u^\xi\cap A}|\langle \nu_u,\xi\rangle|\,\mathrm{d}\mathcal{H}^{d-1} \,,
\end{split}
\end{equation*}
{where in the last equality we used the Area Formula. This concludes the proof of \eqref{eq:lowboundjump}.}
\endproof

\subsection{Proof of the $\Gamma$\hbox{-}liminf inequality} \label{sec:gammaliminf}
{For any $A\in\mathcal{A}(\Omega)$, we denote by $F'(u, A)$ and $G'(u,A)$ the lower $\Gamma$\hbox{-}limits of $F_\varepsilon(u,A)$ and $G_\varepsilon(u,A)$, respectively, as defined in \eqref{eq:Gammaliminf}.} {It holds that $G'(u, A)\geq F'(u, A)$ for each $A\in\mathcal{A}(\Omega)$ and $u \in L^1(A;\R^n)$ (see, e.g., \cite[Proposition~6.7]{DM93}).}
The results of the previous subsection lead to the following estimate.
\begin{prop}\label{prop:boundsp}
Assume \eqref{eq: alfabeta}, \eqref{eq:coerciv},  and \eqref{eq: hpsi}. Let $F_\e$ and $G_\e$ be defined as in \eqref{energies0} and \eqref{energies1}, respectively, and let $\rho$ comply with \ref{ass-N1}-\ref{ass-N2}.  Let $u\in L^1(\Omega;\R^n)$, $A\in\mathcal{A}(\Omega)$, and define $F'(u, A)$ and $G'(u,A)$ by \eqref{eq:Gammaliminf} {in correspondence of $F_\varepsilon$ and $G_\varepsilon$, respectively}. If $F'(u,A)<+\infty$, then $u\in GSBD^p(A)$ and
\begin{itemize}
\item[{\rm (i)}] $\displaystyle  F'(u,A)\geq \alpha \int_A W(\mathcal{E}u)\,\mathrm{d}x$\,,
\item[{\rm (ii)}] $\displaystyle G'(u,A)\geq F'(u,A)\geq \beta \int_{J_u^\xi\cap A}\tau_\xi|\langle \nu_u,\xi\rangle|\,\mathrm{d}\mathcal{H}^{d-1}$
\end{itemize}
for every $\xi\in \mathbb{S}^{n-1}$.  If in addition $G'(u, A)<+\infty$ holds, then one also has
\begin{itemize}
\item[{\rm (iii)}] $\displaystyle G'(u,A)\geq  \alpha \int_A W(\mathcal{E}u)\,\mathrm{d}x+ \int_A \psi(|u|)\,\mathrm{d}x$.
\end{itemize}
\end{prop}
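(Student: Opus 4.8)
The plan is to combine the two one–sided estimates already at hand (Proposition \ref{prop:estimate} for the bulk part and Proposition \ref{prop:lowboundjump} for the surface part) with the localization Lemma \ref{lem: lemmasup}, exploiting that the bulk and surface measures are mutually singular. First I would fix $u\in L^1(\Omega;\R^n)$ and $A\in\mathcal{A}(\Omega)$ with $F'(u,A)<+\infty$, pick a recovery sequence $u_{\varepsilon_j}\to u$ in $L^1(A;\R^n)$ with $\lim_j F_{\varepsilon_j}(u_{\varepsilon_j},A)=F'(u,A)<+\infty$; by Proposition \ref{prop:compactness} and Theorem \ref{th: GSDBcompactness} (applied on compactly contained open subsets and then exhausting $A$) one gets $u\in GSBD^p(A)$.

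For (i): I would localize by taking $A'\subset\subset A$ arbitrary and a further subsequence realizing $F'(u,A')$. Applying Proposition \ref{prop:estimate} on $A'$ with parameters $\delta,\eta$ produces functions $v_{\varepsilon_j}^{\delta,\eta}\in GSBV^p(A';\R^n)$ with $v_{\varepsilon_j}^{\delta,\eta}\to u$ in $L^1(A';\R^n)$, a uniform bound $\mathcal{H}^{n-1}(J_{v_{\varepsilon_j}^{\delta,\eta}})\le M_{\delta,\eta}F_{\varepsilon_j}(u_{\varepsilon_j},A')\le C$, and $\alpha(1-\sigma_\eta)^2(1-\delta)^{2n+1}\int_{A'}W(\mathcal{E}v_{\varepsilon_j}^{\delta,\eta})\,\mathrm{d}x\le F_{\varepsilon_j}(u_{\varepsilon_j},A')$. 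The sequence $(v_{\varepsilon_j}^{\delta,\eta})_j$ then satisfies the hypotheses of the $GSBD^p$ compactness Theorem \ref{th: GSDBcompactness}, and since the limit in measure is $u$, one has $\mathcal{E}v_{\varepsilon_j}^{\delta,\eta}\weakly \mathcal{E}u$ in $L^p(A';\Mdd)$. Lower semicontinuity of the convex functional $M\mapsto\int_{A'}W(M)\,\mathrm{d}x$ under weak $L^p$-convergence then yields $\alpha(1-\sigma_\eta)^2(1-\delta)^{2n+1}\int_{A'}W(\mathcal{E}u)\,\mathrm{d}x\le F'(u,A')\le F'(u,A)$. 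Letting first $\eta\to0$ (so $\sigma_\eta\to0$), then $\delta\to0$, then exhausting $A'\uparrow A$ by monotone convergence gives $\alpha\int_A W(\mathcal{E}u)\,\mathrm{d}x\le F'(u,A)$, which is (i).

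For (ii): the estimate $F'(u,A)\ge\beta(1-\delta)\int_{J_u^\xi\cap A}\tau_\xi|\langle\nu_u,\xi\rangle|\,\mathrm{d}\mathcal{H}^{n-1}$ is immediate from Proposition \ref{prop:lowboundjump} (choosing an optimal sequence and noting that the slicing lower bound there uses only \ref{ass-N1}); letting $\delta\to0$ removes the factor $(1-\delta)$, and $G'(u,A)\ge F'(u,A)$ is recalled in the text. For (iii), assuming also $G'(u,A)<+\infty$, I would argue that along a recovery sequence for $G'$ the Sobolev functions $u_{\varepsilon_j}$ satisfy $\int_A\psi(|u_{\varepsilon_j}|)\,\mathrm{d}x\le C$ and $F_{\varepsilon_j}(u_{\varepsilon_j},A)\le C$; applying the bulk-localization argument above to the $v_{\varepsilon_j}^{\delta,\eta}$ gives the bulk term, while the fidelity term passes to the limit by lower semicontinuity of $w\mapsto\int_A\psi(|w|)\,\mathrm{d}x$ under $L^1$-convergence (using $\psi$ continuous increasing, hence $|w|\mapsto\psi(|w|)$ is lsc under $L^1$-convergence via Fatou after extracting an a.e.-convergent subsequence). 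To \emph{add} the two lower bounds I would instead use Lemma \ref{lem: lemmasup}: $A\mapsto G'(u,A)$ is superadditive on disjoint open sets (a standard property of $\Gamma$-$\liminf$s), and the bounds in (i)/(ii) say $G'(u,\cdot)$ dominates $\int_\cdot W(\mathcal{E}u)\,\mathrm{d}x$ scaled by $\alpha$, dominates the fidelity measure, and dominates each slicing surface measure; since these are measures that are pairwise mutually singular (the bulk and fidelity parts are absolutely continuous w.r.t.\ $\mathcal{L}^n$, the surface part is concentrated on the $\mathcal{H}^{n-1}$-negligible-in-$\mathcal{L}^n$ set $J_u$), Lemma \ref{lem: lemmasup} applied to $\varphi_h:=\alpha W(\mathcal{E}u)+\psi(|u|)$ together with the surface densities over a countable dense set of directions $\xi$ produces the sum. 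In fact (iii) as stated only asks for the bulk plus fidelity parts, so one applies Lemma \ref{lem: lemmasup} with $\lambda=\mathcal{L}^n$ and $\varphi_1=\alpha W(\mathcal{E}u)$, $\varphi_2=\psi(|u|)$ (or directly, since both are $\mathcal{L}^n$-absolutely continuous, observe $G'(u,A)\ge\int_A(\alpha W(\mathcal{E}u)+\psi(|u|))\,\mathrm{d}x$ by first restricting attention to $A\setminus J_u$ where only the absolutely continuous competitors contribute and both lower bounds hold simultaneously).

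The main obstacle I anticipate is the passage from the perturbed bulk bound with the auxiliary functions $v_{\varepsilon_j}^{\delta,\eta}$ to a clean bound on $\mathcal{E}u$: one must justify that $(v_{\varepsilon_j}^{\delta,\eta})_j$ — which depends on $j$ through $\varepsilon_j$ but also on the fixed parameters $\delta,\eta$ — converges to the \emph{same} $u$ with weakly converging symmetrized gradients, and then manage the \emph{iterated} limits $\varepsilon_j\to0$, $\eta\to0$, $\delta\to0$, $A'\uparrow A$ so that no term is lost. Since $F'(u,A')\le F'(u,A)$ is bounded uniformly in the parameters, the constants $\alpha(1-\sigma_\eta)^2(1-\delta)^{2n+1}\to\alpha$ cause no trouble, but one should be slightly careful that the subsequence extracted from $GSBD^p$ compactness may a priori depend on $\delta,\eta$ — this is harmless because the limit $u$ is already identified by $L^1$-convergence, so every subsequence has the same weak-$L^p$ limit of the symmetrized gradients and lower semicontinuity applies along the full sequence.
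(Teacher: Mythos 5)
Your treatment of the $GSBD^p$ membership, of (i) and of (ii) follows essentially the same route as the paper: a recovery sequence, Proposition \ref{prop:estimate} to produce the auxiliary functions $v_{\varepsilon_j}^{\delta,\eta}$, the lower semicontinuity part of Theorem \ref{th: GSDBcompactness} together with the convexity of $W$, and then the limits $\eta\to0$, $\delta\to0$ (your additional localization $A'\subset\subset A$ and the remark that the weak $L^p$ limit of $\mathcal{E}v_{\varepsilon_j}^{\delta,\eta}$ is forced to be $\mathcal{E}u$ are correct and, if anything, slightly more careful than the written proof). Part (ii) is, as you say, immediate from Proposition \ref{prop:lowboundjump}.

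The one step that would fail is your mechanism for \emph{adding} the two lower bounds in (iii). Lemma \ref{lem: lemmasup} applied with $\lambda=\mathcal{L}^n$, $\varphi_1=\alpha W(\mathcal{E}u)$ and $\varphi_2=\psi(|u|)$ yields
\[
G'(u,A)\ \geq\ \int_A \max\bigl\{\alpha W(\mathcal{E}u),\,\psi(|u|)\bigr\}\,\mathrm{d}x\,,
\]
i.e.\ the \emph{supremum} of the densities, not their sum: the supremum coincides with the sum only when the underlying measures are mutually singular, and here both $\alpha W(\mathcal{E}u)\,\mathrm{d}x$ and $\psi(|u|)\,\mathrm{d}x$ are absolutely continuous with respect to $\mathcal{L}^n$. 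Your fallback remark (``restrict to $A\setminus J_u$ where both lower bounds hold simultaneously'') suffers from the same problem: knowing $G'(u,A)\geq\int_A\alpha W(\mathcal{E}u)\,\mathrm{d}x$ and $G'(u,A)\geq\int_A\psi(|u|)\,\mathrm{d}x$ separately does not give their sum. The correct (and simpler) argument, which is the one the paper uses, exploits the additive structure of $G_\varepsilon$ itself: along a single recovery sequence $(\tilde u_j)$ for $G'(u,A)$, superadditivity of the $\liminf$ gives
\[
G'(u,A)=\liminf_j\Bigl(F_{\varepsilon_j}(\tilde u_j,A)+\int_A\psi(|\tilde u_j|)\,\mathrm{d}x\Bigr)\ \geq\ \liminf_j F_{\varepsilon_j}(\tilde u_j,A)+\liminf_j\int_A\psi(|\tilde u_j|)\,\mathrm{d}x\ \geq\ F'(u,A)+\int_A\psi(|u|)\,\mathrm{d}x\,,
\]
the last inequality by the definition of $F'$ and Fatou; then (iii) follows from (i). Lemma \ref{lem: lemmasup} is reserved, in the paper, for combining the bulk and the anisotropic surface bounds in Proposition \ref{prop:lowerbound}, where the relevant measures genuinely are mutually singular.
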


\proof
With \eqref{eq:Gammaliminf} and a diagonal argument, one may find (not relabeled) subsequences $(u_j)$ and $(\tilde{u}_j)$} converging to $u$ in $L^1(A;\R^n)$ such that
\begin{equation*}
F'(u,A)=\mathop{\lim\inf}_{j\to+\infty} F_{\varepsilon_j}(u_j,A)\,, \quad {G'(u,A)=\mathop{\lim\inf}_{j\to+\infty} G_{\varepsilon_j}(\tilde{u}_j,A)}\,.
\end{equation*}
With the first equality and Proposition \ref{prop:lowboundjump} we have that, if $F'(u,A)<+\infty$, then $u\in GSBD^p(A)$. By the second one, the superadditivity of the liminf, Fatou's lemma and \eqref{eq:Gammaliminf}, we have
\begin{equation*}
\begin{split}
G'(u,A)&=\mathop{\lim\inf}_{j\to+\infty} {G_{\varepsilon_j}(\tilde{u}_j,A)\geq \mathop{\lim\inf}_{j\to+\infty} F_{\varepsilon_j}(\tilde{u}_j,A)+ \mathop{\lim\inf}_{j\to+\infty} \int_A \psi(|\tilde{u}_j|)\,\mathrm{d}x} \\
&\geq F'(u,A)+\int_A \psi(|u|)\,\mathrm{d}x\,.
\end{split}
\end{equation*}
Hence, (iii) will follow once we have proved (i).

We therefore only have to check (i) and (ii). To this aim, let $\eta,\delta\in(0,1)$ be fixed. Then, by applying Proposition~\ref{prop:estimate} to the sequence $(u_j)$, we can find a sequence $(v_j^{\delta,\eta})\subset {GSBV^p(A;\R^n)}$ such that  $v_j^{\delta,\eta}\to u$ in $L^1(A)$ as $\varepsilon_j \to 0$ and
\begin{itemize}
\item[(a)] $\displaystyle \alpha (1-\sigma_\eta)^2(1-\delta)^{2n+1}\int_{A} W(\mathcal{E}v_j^{\delta,\eta}(x))\,\mathrm{d}x\leq F_{\varepsilon_j}(u_j,A)$;
\item[(b)] $\displaystyle \mathcal{H}^{d-1}(J_{v_j^{\delta,\eta}}\cap A)\leq M_{\delta,\eta}  F_{\varepsilon_j}(u_j,A)$.
\end{itemize} 
Now, the equiboundedness of $F_{\varepsilon_j}(u_j,A)$ combined with the bounds (a) and (b) allows to apply the lower semicontinuity part of Theorem \ref{th: GSDBcompactness} to the sequence $(v_j^{\delta,\eta})$. Taking into account that ${A^\infty}=\emptyset$ because $u\in L^1(A; \R^n)$, by the convexity of $W$ and \eqref{eq: GSBD comp}, (ii), we have
\begin{equation*}
\begin{split}
\alpha(1-\sigma_\eta)^2(1-\delta)^{2n+1}\int_{A} W(\mathcal{E}u(x))\,\mathrm{d}x & \leq \mathop{\lim\inf}_{j\to+\infty}\int_{A} W(\mathcal{E}v_j^{\delta,\eta}(x))\,\mathrm{d}x \\
& \leq \mathop{\lim\inf}_{j\to+\infty}F_{\varepsilon_j}(u_j,A)=F'(u, A)\,.
\end{split}
\end{equation*}
We then obtain (i) by letting $\delta\to0$ and $\eta\to0$ above.

For what concerns (ii), from \eqref{eq:lowboundjump} of Proposition~\ref{prop:lowboundjump} we get
\[
\beta(1-\delta)\int_{J_{u}^\xi\cap A}\tau_\xi|\langle \nu_u,\xi\rangle|\,\mathrm{d}\mathcal{H}^{n-1} \leq \mathop{\lim\inf}_{j\to+\infty} F_{\varepsilon_j}(u_j,A)=F'(u,A)
\]
for every $\xi\in \mathbb{S}^{n-1}$, so that (ii) follows by taking the limit as $\delta\to0$ again.
\endproof

For the proof of the $\Gamma$-liminf inequality, we need the following lemma, which can be found in \cite[Lemma~4.5]{Negri2006}.
\begin{lem}
Let $S\subset\R^n$ be a bounded, convex and symmetrical set, and let $\phi_\rho$ and $\tau_\xi$ be defined as in \eqref{eq:funphi} and \eqref{eq:tauxi}, respectively. Then
\begin{equation}
\phi_\rho(v)=\sup_{\xi\in \mathbb{S}^{n-1}} \tau_\xi |\langle v , \xi \rangle|\,.
\end{equation}  
\label{lem:negri1}
\end{lem}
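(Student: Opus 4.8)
The plan is to reduce the statement to the elementary fact that $\phi_\rho/2$ is the norm dual to the gauge $|\cdot|_S$, once $\tau_\xi$ has been identified with $2/|\xi|_S$.

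First I would pin down the value of $\tau_\xi$. Fixing $\xi\in\mathbb{S}^{n-1}$ and setting $J_\xi:=\{t\in\R:\ t\xi\in S\}$, I would note that — since in our setting $S$ is open, bounded, convex, symmetric and contains the origin (the latter following from $S=-S$ together with convexity) — $J_\xi$ is an open, bounded, symmetric interval $(-a_\xi,a_\xi)$ with $a_\xi\in(0,+\infty)$. Because $|\xi|=1$, the map $t\mapsto t\xi$ is an isometry of $\R$ onto the line $\R\xi$, hence $\tau_\xi=\mathcal{H}^1(S\cap\R\xi)=2a_\xi$. On the other hand, for $\eta>0$ one has $\xi\in\eta S\iff \eta^{-1}\xi\in S\iff \eta^{-1}<a_\xi$, so that $|\xi|_S=a_\xi^{-1}$, and therefore $\tau_\xi=2/|\xi|_S$.

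Next I would compute $\phi_\rho(v)$ by writing every $y\in\R^n\setminus\{0\}$ in the form $y=r\xi$ with $r>0$ and $\xi\in\mathbb{S}^{n-1}$; then $|y|_S=r|\xi|_S$, so the membership condition $y\in S$ becomes $r<1/|\xi|_S$ (and the supremum in \eqref{eq:funphi} is unaffected if one allows the endpoint, since $y\mapsto|\langle v,y\rangle|$ is continuous on the compact set $\overline S$). Maximizing first over $r$ and then over $\xi$, and using the previous step in the last equality,
\begin{equation*}
\phi_\rho(v)=2\sup_{y\in S}|\langle v,y\rangle|=2\sup_{\xi\in\mathbb{S}^{n-1}}\ \sup_{0<r\le 1/|\xi|_S} r\,|\langle v,\xi\rangle|=2\sup_{\xi\in\mathbb{S}^{n-1}}\frac{|\langle v,\xi\rangle|}{|\xi|_S}=\sup_{\xi\in\mathbb{S}^{n-1}}\tau_\xi\,|\langle v,\xi\rangle|\,,
\end{equation*}
which is the asserted identity.

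There is no substantial obstacle: the only points deserving a little care are that the supremum defining $\phi_\rho$ over the open set $S$ coincides with the one over $\overline S$, and that $\tau_\xi\in(0,+\infty)$ so that dividing by $|\xi|_S$ is legitimate — both are immediate consequences of $S$ being open, bounded, convex and containing a neighbourhood of the origin.
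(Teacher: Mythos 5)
Your proof is correct. Note that the paper itself does not prove this lemma at all: it simply cites \cite[Lemma~4.5]{Negri2006}, so you have supplied a self-contained argument where the authors give only a reference. The argument is the natural one: the identification $\tau_\xi=2a_\xi=2/|\xi|_S$ (valid because $S$ is a bounded, open, convex, symmetric neighbourhood of the origin, so each line $\R\xi$ meets it in a symmetric interval and the Minkowski gauge of $\xi$ is the reciprocal of its half-length), followed by the polar decomposition $y=r\xi$ in the supremum defining $\phi_\rho$, which exhibits $\phi_\rho/2$ as the dual norm of $|\cdot|_S$. The only hypothesis you use beyond the literal statement of the lemma is that $S$ is \emph{open} (so that $S=\{|x|_S<1\}$ and $a_\xi>0$); this is not written in the lemma's hypotheses but is guaranteed in the paper's setting by assumption \ref{ass-N1} and is explicitly asserted in Section~\ref{sec:notation}, so it is a harmless omission in the statement rather than a gap in your argument. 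Your remarks that the supremum over $S$ and over $\overline S$ coincide, and that $\tau_\xi\in(0,+\infty)$, correctly dispose of the only delicate points.
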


We are now in a position to prove the $\Gamma$-liminf inequality.

\begin{prop}\label{prop:lowerbound}
Let $\rho\in L^\infty(\R^n;[0,+\infty))$ be a convolution kernel satisfying {\rm\ref{ass-N1}}-{\rm\ref{ass-N2}}. Assume \eqref{eq: alfabeta}, \eqref{eq:coerciv},  and \eqref{eq: hpsi}. Consider $F_\e$, and $G_\e$ given by \eqref{energies0}, and \eqref{energies1}, respectively.  Let $u\in L^1(\Omega;\R^n)$ and let $A\in\mathcal{A}(\Omega)$, and define $F'(u, A)$ and $G'(u,A)$ by \eqref{eq:Gammaliminf} {in correspondence of $F_\varepsilon$ and $G_\varepsilon$, respectively}. If $F'(u,A)<+\infty$, then $u\in GSBD^p(A)$ and
\begin{equation*}
\displaystyle F'(u,A)\geq  \alpha\int_A W(\mathcal{E}u)\,\mathrm{d}x + \beta\,\int_{J_u\cap A}\phi_\rho(\nu)\,\mathrm{d}\mathcal{H}^{n-1}\,.
\end{equation*}
If it additionally holds $G'(u, A)<+\infty$, then
\begin{equation*}
\displaystyle G'(u,A)\geq  \alpha\int_A W(\mathcal{E}u)\,\mathrm{d}x + \beta\,\int_{J_u\cap A}\phi_\rho(\nu)\,\mathrm{d}\mathcal{H}^{n-1}+\int_A \psi(|u|)\,\mathrm{d}x\,.
\end{equation*}
\end{prop}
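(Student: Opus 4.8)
Here is the plan I would follow; it combines the bulk estimate, the directional surface estimate, and a localization argument, and it treats the two displayed inequalities in parallel (take $\psi\equiv 0$ and $G'$ replaced by $F'$ throughout to obtain the unprimed statement). Fix once and for all the open set $A$ for which $F'(u,A)<+\infty$ (resp.\ $G'(u,A)<+\infty$) is assumed, and work inside it.

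\emph{Step 1: superadditivity of the localized $\Gamma$-liminf.} I would first observe that $A'\mapsto F_\varepsilon(u,A')$ and $A'\mapsto G_\varepsilon(u,A')$ are monotone and superadditive on disjoint open subsets of $A$: monotonicity is immediate since $f\ge 0$ is nondecreasing and $W\ge 0$, while superadditivity follows because for disjoint open $A_1,A_2$ one has $\int_{A_1\cup A_2}W(\mathcal Eu(y))\rho_\varepsilon(x-y)\,\mathrm dy\ge\int_{A_i}W(\mathcal Eu(y))\rho_\varepsilon(x-y)\,\mathrm dy$ for $x\in A_i$, and the $\psi$-term is additive. Passing these properties through the definition \eqref{eq:Gammaliminf} — choosing, by a diagonal argument, subsequences realizing the liminf on $A$ and noting that the same sequences converge in $L^1$ on every open $A'\subset A$ — gives that $A'\mapsto F'(u,A')$ and $A'\mapsto G'(u,A')$ are monotone and superadditive on disjoint open subsets of $A$; monotonicity together with the standing finiteness hypothesis makes them finite on $\mathcal A(A)$. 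By Proposition~\ref{prop:boundsp} (via Proposition~\ref{prop:lowboundjump} and Theorem~\ref{th: GSDBcompactness}) one also has $u\in GSBD^p(A)$, hence $\mathcal H^{n-1}(J_u)<+\infty$, so that $\lambda:=\mathcal L^n+\mathcal H^{n-1}\res J_u$ is a finite positive measure on $A$. Recall finally that $G'(u,\cdot)\ge F'(u,\cdot)$.

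\emph{Step 2: the directional surface data.} Fix a countable dense set $\{\xi_h\}_{h\ge 1}\subset\mathbb S^{n-1}$. For $x\in J_u$ one has $x\in J_u^{\xi_h}$ exactly when $\langle u^+(x)-u^-(x),\xi_h\rangle\neq 0$; since $u^+(x)-u^-(x)\neq 0$, the excluded directions form the nowhere dense great subsphere $\mathbb S^{n-1}\cap(u^+(x)-u^-(x))^\perp$, so $\{\xi_h:\,x\in J_u^{\xi_h}\}$ is still dense in $\mathbb S^{n-1}$. As $\xi\mapsto\tau_\xi$ is continuous (because $S$ is open, bounded and convex), Lemma~\ref{lem:negri1} gives, for every $x\in J_u$,
\[
\sup\{\tau_{\xi_h}|\langle\nu_u(x),\xi_h\rangle|:\,x\in J_u^{\xi_h}\}=\sup_{\xi\in\mathbb S^{n-1}}\tau_\xi|\langle\nu_u(x),\xi\rangle|=\phi_\rho(\nu_u(x)).
\]

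\emph{Step 3: localization via Lemma~\ref{lem: lemmasup}.} I would then set $\mu:=G'(u,\cdot)$ on $\mathcal A(A)$ and define Borel functions on $A$ by $\varphi_0:=\bigl(\alpha W(\mathcal Eu)+\psi(|u|)\bigr)\chi_{A\setminus J_u}$ and, for $h\ge 1$, $\varphi_h:=\beta\,\tau_{\xi_h}|\langle\nu_u,\xi_h\rangle|\,\chi_{J_u^{\xi_h}}$ (fixing Borel representatives of $\mathcal Eu$ and $\nu_u$). Since $\mathcal L^n(J_u)=0$, for every open $A'\subseteq A$ one has $\int_{A'}\varphi_0\,\mathrm d\lambda=\int_{A'}(\alpha W(\mathcal Eu)+\psi(|u|))\,\mathrm dx\le\mu(A')$ by Proposition~\ref{prop:boundsp}(iii), and $\int_{A'}\varphi_h\,\mathrm d\lambda=\beta\int_{J_u^{\xi_h}\cap A'}\tau_{\xi_h}|\langle\nu_u,\xi_h\rangle|\,\mathrm d\mathcal H^{n-1}\le F'(u,A')\le\mu(A')$ for $h\ge 1$ by Proposition~\ref{prop:boundsp}(ii). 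Lemma~\ref{lem: lemmasup} (applied with $A$ in place of $\Omega$) then yields $\mu(A)\ge\int_A\sup_{h\ge 0}\varphi_h\,\mathrm d\lambda$. By Step 2, $\sup_{h\ge 0}\varphi_h=\bigl(\alpha W(\mathcal Eu)+\psi(|u|)\bigr)\chi_{A\setminus J_u}+\beta\,\phi_\rho(\nu_u)\chi_{J_u}$, and integrating this against $\lambda=\mathcal L^n+\mathcal H^{n-1}\res J_u$ (the cross terms vanishing) gives precisely the claimed bound for $G'(u,A)$; the bound for $F'(u,A)$ follows by the identical argument with $\psi\equiv 0$ and $\mu=F'(u,\cdot)$, using Proposition~\ref{prop:boundsp}(i)–(ii).

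\emph{Expected obstacle.} The substantive inputs are already contained in Propositions~\ref{prop:estimate}, \ref{prop:lowboundjump} and \ref{prop:boundsp}, so here the only genuinely delicate points are the superadditivity of the lower $\Gamma$-limit as a set function (Step 1) and the identification $\sup_h\varphi_h=\phi_\rho(\nu_u)$ on $J_u$ — in particular the density of the ``good'' directions at each jump point (Step 2); once these are in place, the mutual singularity of the bulk (Lebesgue) and surface ($\mathcal H^{n-1}\res J_u$) measures is what makes the single supremum collapse to the sum of the two energies, and Lemma~\ref{lem: lemmasup} does the rest.
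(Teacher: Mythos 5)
Your proposal is correct and follows essentially the same route as the paper: superadditivity of the localized lower $\Gamma$-limit, the measure $\lambda=\mathcal L^n+\mathcal H^{n-1}\res J_u$, the family $\varphi_0,\varphi_h$ built from Proposition~\ref{prop:boundsp}, Lemma~\ref{lem: lemmasup}, and the identification of $\sup_h\varphi_h$ with $\phi_\rho(\nu_u)$ on $J_u$ via Lemma~\ref{lem:negri1}. Your Step~2 merely spells out the density-of-good-directions point that the paper leaves implicit in the phrase ``for $\lambda$-a.e.\ $x\in A$''.
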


\proof
The proof can be obtained by a standard localization method based on Lemma~\ref{lem: lemmasup}. In order to prove, e.g., the second assertion containing an additional term, we can apply Lemma~\ref{lem: lemmasup} to the set function $\mu(A):= G'(u,A)$, which is superadditive on disjoint open sets since $G_\varepsilon(u,\cdot)$ is superadditive as a set function: 
\begin{equation*}
G'(u, A_1\cup A_2)\geq G'(u, A_1)+G'(u, A_2) \quad \mbox{ whenever } A_1,A_2\in\mathcal{A}(\Omega) \mbox{ with } {A}_1\cap {A}_2=\emptyset\,.
\end{equation*}
Then, we consider the positive measure $\lambda(A):= \mathcal{L}^n(A)+\mathcal{H}^{n-1}(J_u\cap A)$ and the sequence $(\varphi_h)_{h\geq0}$ of $\lambda$-measurable functions on $A$ defined as
\begin{equation*}
\varphi_0(x):=
\begin{cases}
\alpha W(\mathcal{E}u(x)) + \psi(|u(x)|)\,, & \mbox{ if }x\in A\backslash J_u\,,\\
0\,, & \mbox{ if }x\in A\cap J_u\,,
\end{cases}
\end{equation*} 
\begin{equation*}
\varphi_h(x):=
\begin{cases}
0\,, & \mbox{ if }x\in A\backslash J_u\,,\\
\beta\phi^{\xi_h}(x)\,, & \mbox{ if }x\in A\cap J_u\,,
\end{cases}
\end{equation*}
where
\begin{equation*}
\phi^{\xi_h}(x)=
\begin{cases}
\tau_{\xi_h}|\langle\nu_u(x),\xi_h\rangle|\,, & \mbox{ if }x\in J_u^{\xi_h}\cap A\,,\\
0\,, & \mbox{ otherwise in }J_u\cap A\,,
\end{cases}
\end{equation*}
for $(\xi_h)_{h\geq1}$ a dense sequence in $\mathbb{S}^{n-1}$.

Now, by virtue of Proposition~\ref{prop:boundsp} it holds that
\begin{equation*}
\mu(A)\geq \int_A\varphi_h\mathrm{d}\lambda
\end{equation*}
for every $h=0,1,\dots$, so that all the assumptions of Lemma~\ref{lem: lemmasup} are satisfied. The assertion then follows once we notice that, taking into account Lemma~\ref{lem:negri1}, it holds 
\begin{equation*}
\sup_{h\geq0}\varphi_h(x)=\varphi(x):=
\begin{cases}
\alpha W(\mathcal{E}u(x)) + \psi(|u(x)|)\,, & \mbox{ if }x\in A\backslash J_u\,,\\
\beta\phi_\rho(\nu_u(x))\,, & \mbox{ if }x\in A\cap J_u\,,
\end{cases}
\end{equation*}
for $\lambda$-a.e. $x\in A$. 
\endproof

\section{Estimate from above of the $\Gamma$\hbox{-}limit}\label{sec:upperbound}

{We denote by $F''$ and $G''$ the upper $\Gamma$\hbox{-}limits of $(F_\varepsilon)$ and $(G_\varepsilon)$, respectively, as defined in \eqref{eq:limsupchar}.} 

%

\begin{prop}\label{prop:upperbound}
Let $u\in GSBD^p(\Omega)\cap L^1(\Omega;\R^n)$. Then
\begin{equation}
F''(u)\leq \displaystyle \alpha \int_\Omega W(\mathcal{E}u)\,\mathrm{d}x +  \beta\,\int_{J_u}\phi_\rho(\nu)\,\mathrm{d}\mathcal{H}^{n-1}\,. 
\label{eq:upperbound}
\end{equation}
If, in addition, it holds that $\int_\Omega \psi(|u|)\,\mathrm{d}x<+\infty$, then 
\begin{equation}
G''(u)\leq \displaystyle \alpha \int_\Omega W(\mathcal{E}u)\,\mathrm{d}x +  \beta\,\int_{J_u}\phi_\rho(\nu)\,\mathrm{d}\mathcal{H}^{n-1} + \int_\Omega \psi(|u|)\,\mathrm{d}x\,.
\label{eq:upperboundG}
\end{equation}
\end{prop}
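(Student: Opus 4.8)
The plan is to prove the $\Gamma$-limsup inequality by a density-plus-diagonalization argument combined with an explicit recovery sequence. Since the $\Gamma$-limsup $F''$ is lower semicontinuous with respect to $L^1$-convergence and, by Proposition~\ref{prop:boundsp}, the candidate limit functional $F$ is continuous along sequences provided by the approximation Theorem~\ref{thm: approx}, it suffices to establish \eqref{eq:upperbound} for $u$ in the dense class $\mathcal{W}(\Omega;\R^n)$ of piecewise smooth $SBV$-functions with a closed jump set contained in a finite union of pairwise disjoint $(n-1)$-dimensional simplexes (using the refinement of \cite[Remark~4.3]{ChaCri18b}). Indeed, if $(u_j)\subset\mathcal{W}(\Omega;\R^n)$ satisfies \eqref{convme}--\eqref{convjump} (and \eqref{eq:densityfidel} for the $G$-statement), and if $F''(u_j)\le \alpha\int_\Omega W(\mathcal{E}u_j)\,\dd x + \beta\int_{J_{u_j}}\phi_\rho(\nu)\,\dd\mathcal{H}^{n-1}$, then by \eqref{convgrad}, \eqref{convjump} and lower semicontinuity of $F''$ the bound passes to $u$; for $G''$ one additionally uses \eqref{eq:densityfidel} and the continuity of $v\mapsto\int_\Omega\psi(|v|)\,\dd x$ under that convergence. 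So the whole problem reduces to constructing a good recovery sequence for such a nice $u$.

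\textbf{Recovery sequence for $u\in\mathcal{W}$.} For such $u$, the jump set $\overline{J}_u$ is a finite union of disjoint closed simplexes lying in hyperplanes, and $u\in W^{m,\infty}(\Omega\setminus J_u;\R^n)$ for all $m$. The natural competitor is $u$ itself: set $u_\varepsilon:=u$ away from a shrinking neighborhood of $J_u$, and modify $u$ inside a tubular neighborhood of controlled width so that the resulting function lies in $W^{1,p}(\Omega;\R^n)$. Concretely, for each simplex $\Sigma_i\subset\Pi_i$ of $\overline{J}_u$ with normal $\nu_i$, take the slab $T_\varepsilon^i:=\{x:\,\mathrm{dist}_S(x,\Pi_i)<\tfrac{\tau_{\nu_i}}{2}\varepsilon\}$ (the $S$-distance, so that the kernel $\rho_\varepsilon$ centered at a point at $S$-distance $\ge\tfrac{\tau_{\nu_i}}{2}\varepsilon$ from $\Pi_i$ does not see the jump) and interpolate linearly across $T_\varepsilon^i$ between the two traces $u^\pm$ of $u$ on $\Pi_i$. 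Outside $\bigcup_i T_\varepsilon^i$ keep $u_\varepsilon=u$; since $u$ is smooth there, $\mathcal{E}u_\varepsilon=\mathcal{E}u$ and the averaged bulk density $\varepsilon\int W(\mathcal{E}u_\varepsilon)\rho_\varepsilon(x-y)\,\dd y$ is $O(\varepsilon)$, so $\tfrac1\varepsilon f(\cdot)\to\alpha W(\mathcal{E}u(x))$ pointwise there, and by the bound $f(t)\le\alpha t$ and dominated convergence $\tfrac1\varepsilon\int_{\Omega\setminus\bigcup T_\varepsilon^i} f(\ldots)\,\dd x\to\alpha\int_\Omega W(\mathcal{E}u)\,\dd x$.

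\textbf{The surface contribution.} The delicate part is estimating $\tfrac1\varepsilon\int_{N_\varepsilon} f(\ldots)\,\dd x$ where $N_\varepsilon$ is the set of centers $x$ for which $B_{\varepsilon}(x)$ (in the $S$-geometry, i.e. $x+\varepsilon S$) meets some $T_\varepsilon^i$; this is an $O(\varepsilon)$-neighborhood of $\overline{J}_u$, so $\mathcal{L}^n(N_\varepsilon)=O(\varepsilon)$ and the integral is $O(1)$, and one must show the limit is exactly $\beta\int_{J_u}\phi_\rho(\nu)\,\dd\mathcal{H}^{n-1}$. Bounding $f\le\beta$ crudely gives $\le\tfrac\beta\varepsilon\mathcal{L}^n(N_\varepsilon)$, which by a coarea/Fubini computation in the $S$-geometry near a single hyperplane $\Pi_i$ yields $\beta\,\mathcal{H}^{n-1}(\Sigma_i)\cdot\tfrac1\varepsilon\big(\text{width in direction }\nu_i\text{ of }N_\varepsilon\text{ above }\Sigma_i\big)$; the width is $\tau_{\nu_i}\varepsilon$ up to lower order (by definition of $\tau_\xi$ in \eqref{eq:tauxi} and of $N_\varepsilon$ as an $\varepsilon$-dilate of $T_\varepsilon^i$ by $S$), so this gives exactly $\beta\tau_{\nu_i}\mathcal{H}^{n-1}(\Sigma_i)$. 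Summing over $i$ and invoking Lemma~\ref{lem:negri1} in the form $\phi_\rho(\nu_i)=\tau_{\nu_i}$ when... — more precisely, since $\overline{J}_u$ is flat, on each simplex the normal is constant and $\phi_\rho(\nu_i)\le\tau_{\nu_i}$, so one must be slightly careful: the correct width is the one seen by the kernel, which is $\phi_\rho(\nu_i)\varepsilon$ once one computes $\sup\{|\langle y,\nu_i\rangle|:y\in S\}$, matching \eqref{eq:funphi}. Thus the contributions of neighboring centers along $\nu_i$ sum to $\beta\phi_\rho(\nu_i)\mathcal{H}^{n-1}(\Sigma_i)$, and summing over the (disjoint) simplexes gives $\beta\int_{J_u}\phi_\rho(\nu)\,\dd\mathcal{H}^{n-1}$. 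Disjointness of the $\Pi_i$ (via the refined approximation) ensures the neighborhoods $N_\varepsilon^i$ are disjoint for small $\varepsilon$, so there is no interaction; the interpolation layer $T_\varepsilon^i$ itself has measure $O(\varepsilon)$ and contributes $\le\tfrac\beta\varepsilon\cdot O(\varepsilon)\cdot\mathcal{H}^{n-1}(\Sigma_i)$ which must be absorbed — here one chooses the interpolation width a further factor $o(1)$ smaller than the kernel scale, or more simply notes it is already counted in $N_\varepsilon$. The $G''$ statement follows by adding $\int_\Omega\psi(|u_\varepsilon|)\,\dd x$: since $u_\varepsilon\to u$ in $L^1$ and $u_\varepsilon$ equals $u$ off a set of vanishing measure while being bounded by $\|u\|_\infty$ on the interpolation layers, $\int_\Omega\psi(|u_\varepsilon|)\,\dd x\to\int_\Omega\psi(|u|)\,\dd x$ by dominated convergence.

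\textbf{Main obstacle.} The main difficulty is the precise bookkeeping of the surface term: one needs the limit of $\tfrac1\varepsilon\int_{N_\varepsilon}f(\varepsilon\int W(\mathcal{E}u_\varepsilon)\rho_\varepsilon)\,\dd x$ to equal $\beta\phi_\rho(\nu)$ per unit jump area, \emph{not} just to be bounded by it, which forces a matching lower estimate inside the recovery construction — or, alternatively, one only needs the upper bound here (since this is the limsup inequality) so it suffices that $f\le\beta$ together with the sharp computation $\limsup\tfrac1\varepsilon\mathcal{L}^n(N_\varepsilon)=\int_{J_u}\phi_\rho(\nu)\,\dd\mathcal{H}^{n-1}$, the latter being a purely geometric statement about $S$-neighborhoods of flat pieces that reduces via Fubini to the one-dimensional fact that the $x_\nu$-section of $(x+\varepsilon S)$ meeting $\Pi$ has length $\phi_\rho(\nu)\varepsilon$. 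Getting this geometric constant exactly right, and checking that the linear interpolation across $T_\varepsilon^i$ does not inflate the bulk average beyond the threshold on a set of more than $o(1/\varepsilon)\cdot$(width) centers — i.e. that most centers in $N_\varepsilon$ genuinely "see the jump" and hit the plateau value $\beta$ of $f$ rather than an intermediate value — is where the care is needed; choosing the interpolation layer width $\ll\varepsilon$ makes the gradient there $O(1/(\text{width}))$, huge, so $f$ saturates to $\beta$ on essentially all of $N_\varepsilon$, which is exactly what one wants for the upper bound.
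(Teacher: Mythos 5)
Your proposal is correct and follows essentially the same route as the paper: reduce via the density Theorem~\ref{thm: approx} and a diagonal argument to $u\in\mathcal{W}(\Omega;\R^n)$ with flat jump simplexes, take a recovery sequence equal to $u$ outside an $o(\varepsilon)$-layer around $J_u$, bound $f$ by $\beta$ on the set of centers whose rescaled kernel support meets that layer, and identify $\lim_\varepsilon\varepsilon^{-1}\mathcal{L}^n$ of that set with $\int_{J_u}\phi_\rho(\nu)\,\mathrm{d}\mathcal{H}^{n-1}$ (for which the paper quotes the anisotropic Minkowski-content theorem of \cite{LusVil}). The only substantive differences are cosmetic or self-corrected in your text: the paper avoids the trace interpolation by simply setting $u_\varepsilon=u(1-\phi_\varepsilon)$ with a cut-off supported in a layer of width $\gamma_\varepsilon+\varepsilon$ with $\gamma_\varepsilon/\varepsilon\to0$ (as you note, the modification width must be $o(\varepsilon)$, and the relevant slab width seen by the kernel is $\phi_\rho(\nu)\varepsilon$, not $\tau_\nu\varepsilon$), and no matching lower estimate along the recovery sequence is needed for the limsup inequality.
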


\proof
We only prove \eqref{eq:upperbound} by using the density result of Theorem~\ref{thm:density}, as \eqref{eq:upperboundG} follows by an analogous construction with the additional property \eqref{eq:densityfidel}. 

In view of Theorem~\ref{thm:density} and remarks below, since we perform a local costruction and by a diagonal argument it is not restrictive to assume that $u\in \mathcal{W}(\Omega;\Rd)$ and that $J_u$ is a closed simplex contained in any of the coordinate hyperplanes, 
that we denote by $K$.

For every $h>0$, let {$K_h:=\cup_{x\in K}S(x,h)$} be the anisotropic $h$-neighborhood of $K$. 
As $K$ is compact and $(n-1)$-rectifiable, it holds (see for instance \cite[Theorem 3.7]{LusVil})
\begin{equation}\label{anisotropicMink}
\lim_{h\to 0}\frac1h\mathcal{L}^n(K_h)=\int_K \phi_\rho(\nu)\,\mathrm{d}\mathcal{H}^{n-1}
\end{equation}
(observe that a factor $2$ is already contained in our definition \eqref{eq:funphi} of $\phi_\rho$). Let $\gamma_\epsilon>0$ be a sequence such that $\gamma_\epsilon/\epsilon\to0$ as $\epsilon\to0$. Notice that, for $\varepsilon$ small,
\begin{equation*}
K \subset K_{\gamma_\varepsilon}\subset\subset K_{\gamma_\varepsilon+\varepsilon}\subset\subset \Omega\,,
\end{equation*} 
recalling that $K \subset \Omega$.
Let $\phi_\epsilon$ be a smooth cut-off function between $ K_{\gamma_\varepsilon}$ and $K_{\gamma_\varepsilon+\varepsilon}$, and set
\begin{equation*}
u_\epsilon(x):=u(x)(1-\phi_\epsilon(x))\,.
\end{equation*}
Since $u\in W^{1,\infty}({\Omega}\backslash J_u;\R^n)$ we have $u_\epsilon\in W^{1,\infty}({\Omega};\R^n)$. Note also that, by the Lebesgue Dominated Convergence Theorem, $u_\epsilon\to u$ in $L^1(\Omega;\mathbb{R}^n)$. Moreover, since $u_\epsilon=u$ on $S(x,\epsilon)\cap\Omega$ if $x\not\in K_{\gamma_\varepsilon+\varepsilon}$, we have
\begin{equation}
F_\epsilon(u_\epsilon)\leq \displaystyle\frac{1}{\varepsilon}\int_\Omega f\left(\varepsilon \int_{S(x,\varepsilon)\cap\Omega}W(\mathcal{E}u(y))\rho_\varepsilon(y-x)\,\mathrm{d}y\right)\,\mathrm{d}x + \beta \, \frac{\mathcal{L}^n(K_{\gamma_\varepsilon+\varepsilon})}{\epsilon}\,.
\label{stimaint}
\end{equation}
Setting
\begin{equation*}
w_\epsilon(x):= \int_{S(x,\varepsilon)\cap\Omega}W(\mathcal{E}u(y))\rho_\varepsilon(y-x)\,\mathrm{d}y\,,
\end{equation*}
we have that $w_\epsilon(x)$ converges to $w(x):=W(\mathcal{E}u(x))$ in $L^1_{\rm loc}(\Omega)$ as $\varepsilon\to0$. Since $f$ complies with \eqref{eq: alfabeta} and it is increasing, there exists $\tilde{\alpha}>\alpha$ such that $f(t)\leq \tilde{\alpha} t$ for every $t\geq0$.
This gives
\begin{equation*}
\frac{1}{\epsilon}f(\epsilon w_\epsilon(x))\leq \tilde{\alpha}w_\epsilon(x) \quad \mbox{ for every }x\in\Omega \mbox{ and every }\epsilon>0\,,
\end{equation*}
and, taking into account that $\displaystyle\lim_{t\to0^+}\frac{f(t)}{t}=\alpha$, we also infer that
\begin{equation*}
\frac{1}{\epsilon}f(\epsilon w_\epsilon(x))\to \alpha w(x) \quad \mbox{ for a.e. }x\in\Omega\,.
\end{equation*}
Thus, by Lebesgue's Dominated Convergence Theorem,
\begin{equation*}
\lim_{\epsilon\to0} \displaystyle\frac{1}{\varepsilon}\int_\Omega f\left(\varepsilon \int_{S(x,\varepsilon)\cap\Omega}W(\mathcal{E}u(y))\rho_\varepsilon(y-x)\,\mathrm{d}y\right)\,\mathrm{d}x = \displaystyle \alpha \int_\Omega W(\mathcal{E}u)\,\mathrm{d}x\,.
\end{equation*}
As $\tfrac{\gamma_\epsilon+\epsilon}{\epsilon}\to1$ as $\epsilon\to0$,
from \eqref{anisotropicMink}, \eqref{stimaint}, the subadditivity of the limsup and \eqref{eq:limsupchar} we get \eqref{eq:upperbound}.
\endproof

\begin{proof}[\it Proof of Theorems \ref{thm:mainresult} and \ref{thm:mainresult2}]
The two results  follow by combining Propositions \ref{prop:compactness}, \ref{prop:lowerbound}, and \ref{prop:upperbound}
\end{proof}

\section*{Acknowledgements}
The authors are members of Gruppo Nazionale per l'Analisi Matematica, la Probabilit\`a e le loro Applicazioni (GNAMPA) of INdAM.

G. Scilla and F. Solombrino have been supported by the Italian Ministry of Education, University and Research through the Project “Variational methods for stationary and evolution problems with singularities and interfaces” (PRIN 2017).

\bibliographystyle{siam}

\bibliography{references}

\end{document}